\setlist[enumerate]{parsep=0pt}
\newtheorem{theorem}{Theorem}
\theoremstyle{definition}
\newtheorem{definition}[theorem]{Definition}
\theoremstyle{theorem}
\newtheorem{lemma}[theorem]{Lemma}
\newtheorem{prop}[theorem]{Proposition}
\newtheorem{cor}[theorem]{Corollary}
\newtheorem{remark}[theorem]{Remark}
\crefname{theorem}{Theorem}{Theorems}
\crefname{lemma}{Lemma}{Lemmas}
\crefname{prop}{Proposition}{Propositions}
\crefname{fact}{Fact}{Facts}
\crefname{remark}{Remark}{Remarks}
\crefname{cor}{Corollary}{Corollaries}
\newcommand{\theoremprefix}{}
\let\thetheoremsaved\thetheorem
\renewcommand{\thetheorem}{\theoremprefix\thetheoremsaved}
\patchcmd{\@startsection}{\par}{\renewcommand{\theoremprefix}{\csname the#1\endcsname.}}{}{}
\begin{document}

\clearpage
\pagenumbering{arabic}

\def\tp{\mbox{\rm tp}}
\def\qftp{\mbox{\rm qftp}}
\def\cb{\mbox{\rm cb}}
\def\wcb{\mbox{\rm wcb}}
\def\Diag{\mbox{\rm Diag}}
\def\trdeg{\mbox{\rm trdeg}}
\def\Gal{\mbox{\rm Gal}}
\def\Lin{\mbox{\rm Lin}}

\def\restriction#1#2{\mathchoice
              {\setbox1\hbox{${\displaystyle #1}_{\scriptstyle #2}$}
              \restrictionaux{#1}{#2}}
              {\setbox1\hbox{${\textstyle #1}_{\scriptstyle #2}$}
              \restrictionaux{#1}{#2}}
              {\setbox1\hbox{${\scriptstyle #1}_{\scriptscriptstyle #2}$}
              \restrictionaux{#1}{#2}}
              {\setbox1\hbox{${\scriptscriptstyle #1}_{\scriptscriptstyle #2}$}
              \restrictionaux{#1}{#2}}}
\def\restrictionaux#1#2{{#1\,\smash{\vrule height .8\ht1 depth .85\dp1}}_{\,#2}} 

\newcommand{\forkindep}[1][]{%
  \mathrel{
    \mathop{
      \vcenter{
        \hbox{\oalign{\noalign{\kern-.3ex}\hfil$\vert$\hfil\cr
              \noalign{\kern-.7ex}
              $\smile$\cr\noalign{\kern-.3ex}}}
      }
    }\displaylimits_{#1}
  }
}
\newpage

\begin{center}

\large \MakeUppercase{On some Fraïssé limits with Free Amalgamation}

\vspace{5mm}

    \large Yvon \textsc{Bossut}
\end{center}
\vspace{10mm}

Abstract : In this work a general way is given to construct some examples of NSOP$_1$ theories as limits of some Fraïssé class satisfying strong conditions. These limits will satisfy existence, that Kim-independence coincides with algebraic independence, and that forking independence is obtained by forcing base monotonicity on Kim-forking. These theories also come with a stationary independence relation. This study is based on results of Baudisch, Ramsey, Chernikov and Kruckman.\footnote{Partially supported by ANR-DFG AAPG2019 GeoMod}

\section{{\Large Introduction}}

NSOP$_1$ theories have recently been studied as a generalisation of simple theories. The notion of Kim-forking plays a similar role in NSOP$_1$ theories than forking in simple theories. Kim and Pillay \cite{kim1997simple} have shown that forking independence in simple theories is characterised by some of its properties (see \cite{wagner2000simple} for example). Chernikov and Ramsey have shown a similar result for Kim-independence in NSOP$_1$ theories, \cite[Theorem 5.8]{chernikov2016model}. The properties of Kim-forking in NSOP$_1$ theories have recently been studied by Ramsey, Kim, Chernikov, Kaplan (\cite{kaplan2020kim}, \cite{kaplan2019local} among others). In this work we shall study independence relations in NSOP$_1$ theories.

\vspace{10pt}
At the origin of this work is an interesting construction which appears in \cite{baudisch2002generic} : The parameterization of a theory. Given some theory $T$ in a language $L$, consider the language $L_{P}$ where we add a new sort $P$ and one variable in $P$ to every symbol of $L$, and denote by $O$ (for object) the original sort. Then an $L_{P}$-structure $A$ consists of its set of parameters $P(A)$ and of its set of objects $O(A)$ which has an $L$-structure $A_{p}:= O(A)_{p}$ induced by every parameter $p\in P(A)$. There does not always exist a model companion to the theory $T_{P}$ expressing that for every parameter $p$, the $L$-structure induced by $p$ is a model of $T$.

\vspace{10pt}
Examples of such model companions are given in \cite[Section 6.3]{chernikov2016model} and constructed as Fraïssé limits. Ramsey and Chernikov consider a relational language $L$ and a class $K$ of structures with the strong amalgamation property. Then the parameterized class $K_{P}$ is defined as the class of $L_{P}$-structures $A$ such that for every $p\in P(A)$ the $L$-structure induced by $p$ on $A$ is in $K$, and show that the limit of this class is NSOP$_1$. They give some condition on $K$ under which the limit of $K_{P}$ is non-simple. One example of the theories they build is the theory of parameterized equivalence relations. 

\vspace{10pt}
Here we shall give a context in which we can parameterize in the same fashion without assuming that the language is relational. The cost of this is to have strong assumptions about the generated structure, namely a set of condition $(H)$ which strengthens the usual Fraïsse properties. We obtain similar results as Ramsey and Chernikov : If some class $K$ satisfies $(H)$ then its limit $T$ is NSOP$_1$ and we can compute forking independence and Kim-independence in $T$ (\cref{nsop1ouais} and \cref{forkindep}).

\vspace{10pt}
This is done through the use of some stationary independence relation which comes from the assumption that $K$ is closed under free amalgamation in the categorical sense. This independence relation was already mentioned in \cite{baudisch2014free}, and it does not coincide with the notion of Free Amalgamation of \cite{conant2017axiomatic}, which also is a stationary independence relation but requires the algebraic closure to be generically trivial : If $A$ and $B$ are independent and algebraically closed then their union is algebraically closed - this will not hold in our case unless the algebraic closure is trivial. 

\vspace{10pt}
It is also shown that the condition $(H)$ is preserved under some modifications of the language $L$ and the class $K$ : parameterizing $K$ and adding generic structure to $K$. We also give a condition under which the limit of the parameterized class is non-simple. This context allows us to build model companions of some exotic theories, for example the theory of an abelian group with a unary function $f$ such that $f(0)=0$. We conclude by asking some questions about NSOP$_{1}$ theories with stationnary independence relations.

\vspace{10pt}
\textbf{Acknowledgements :} I thank Mark Kamsma and Nadav Meir for their useful questions and remarks.

\section{Preliminaries : Independence relations and NSOP1}

\subsection{Forking and dividing}

Let $\mathbb{M}\models T$ be a monster model of some complete theory. We introduce the different relations of independence that we will work with. See \cite{adler2005explanation} and \cite{d2023axiomatic} on the topic of the axiomatic approach to independence relations. 

\begin{definition} Let $b_{0},e$ be tuples of $\mathbb{M}$ and $\varphi(x,b_{0})$ be a formula.\begin{enumerate}
\item $\varphi(x,b_{0})$ \emph{divides over $e$} if there is an $e$-indiscernible sequence $I=(b_{i}$ : $i < \omega)$ such that $ \lbrace \varphi (x,b_{i})$ : $i<\omega \rbrace$ is inconsistent. In that case we say that $\varphi(x,b_{0})$ divides over $e$ with respect to $I$.
\item A partial type $p (x,b)$ \emph{forks over $e$} if it implies a finite disjunction of formulas each of which divides over $e$.
\item Let $a \in \mathbb{M}$. We write $a\forkindep^{f}_{e}b$ to denote the assertion that $\tp(a/eb_{0})$ does not fork over $e$ and $a\forkindep^{d}_{e}b_{0}$ to denote the assertion that $\tp(a/eb_{0})$ does not divide over $e$.
\item A sequence of tuples $(a_{i}$ : $i<\omega)$ is called \emph{$e$-Morley} if it is $e$-indiscernible and if $a_{i}\forkindep^{f}_{e}a_{<i} $ for every $i<\omega $.
\end{enumerate}
\end{definition}

\begin{definition} A set $e\subseteq \mathbb{M}$ is an \emph{extension basis} if $a\forkindep^{f}_{e}e$ for every $a\in \mathbb{M}$. A theory $T$ has \emph{existence} if every set is an extension basis. This is equivalent to saying that for every $e\in \mathbb{M}$ and every $p(x)\in S(e)$ there is an $e$-Morley sequence in $p$.
\end{definition}

\begin{remark} Every model is an extension basis, simple theories satisfy existence, and it is a conjecture that NSOP$_1$ theories do as well.
 \end{remark}

\begin{definition} Assume that $e$ is an extension basis. Let $b_{0}\in \mathbb{M}$ and $\varphi(x,b_{0})$ be a partial type.\begin{enumerate}

\item $\varphi(x, b_{0})$ \emph{Kim-divides over $e$} if there is an $e$-Morley sequence $(b_{i}$ : $i<\omega )$ such that $\lbrace \varphi(x,b_{i})$ : $i<\omega \rbrace$ is inconsistent. We will say that $\varphi(x, b_{0})$ Kim-divides over $e$ with respect to $(b_{i}$ : $i<\omega)$.

\item $p(x,b)$ \emph{Kim-forks over $e$} if it implies a finite disjunction of formulas each of which Kim-divides over $e$.

\item Let $a \in \mathbb{M}$. We write $a\forkindep^{K}_{e}b_{0}$ to denote the assertion that $\tp(a/eb)$ does not Kim-fork over $e$ and $a\forkindep^{Kd}_{e}b$ to denote the assertion that $\tp(a/eb)$ does not Kim-divide over $e$.

\item $I=(a_{i}$ : $i<\omega)$ is \emph{Kim-Morley over $e$} if it is $e$-indiscernible and if $a_{i}\forkindep^{K}_{e}a_{<i}$ for every $i<\omega$.
\end{enumerate}
\end{definition}

\begin{definition} We define \emph{algebraic independence} as the relation $A\forkindep^{a}_{C}B$ iff $  acl(AC)\cap acl(BC)=acl(C)$.\end{definition}

\begin{definition} We define \emph{$M$-independence} as the weakest relation implying $\forkindep^{a}$ and satisfying base monotonicity and closure, i.\ e.\  $A\forkindep^{M}_{C}B$ iff $acl(AC')\forkindep^{a}_{C'} acl(BC)$ for every $C\subseteq C' \subseteq acl(BC)$.\end{definition}

This relation was defined by Onshuus in the context of rosy theories (see \cite{onshuus2003thforking} for reference), but we will not make further use of rosiness because of \cref{thornfork}.

\subsection{NSOP1 and the Kim-Pillay theorem}

As many of the dividing lines the property of NSOP$_1$ is defined by using a tree.

\begin{definition} A formula $\varphi (x,y) $ in $\mathcal{L}$ has the strong order property of the first kind (SOP$_{1}$) in $T$ if there is a tree of parameters $(a_{\eta})_{\eta \in 2^{<\omega}}$ in a model of $T$ such that: \begin{center}
$\left\{ \begin{array}{l}
        \ \forall \eta \in 2^{\omega}$, $\lbrace \varphi (x,a_{\eta \lceil\alpha})$ : $\alpha < \omega \rbrace $ is consistent$  \\
        \ \forall \nu,\eta \in 2^{<\omega}$, if $\nu \unrhd \eta\frown 0  $ : $\lbrace \varphi (x,a_{\nu})$,  $\varphi (x,a_{\eta \frown 1}) \rbrace $ is inconsistent$
    \end{array}
    \right.  $ 
\end{center}
The theory $T$ has SOP$_{1}$ if one of its formulas does. $T$ is NSOP$_{1}$ otherwise.\end{definition}

\begin{figure}[!ht]
\centering
\resizebox{0.5\textwidth}{!}{%
\begin{circuitikz}
\tikzstyle{every node}=[font=\Huge]
\node [font=\Huge] at (12.75,-2.25) {$a_{\emptyset}
$};
\node [font=\Huge] at (7.5,2.75) {$a_{0}$};
\node [font=\Huge] at (17.5,2.75) {$a_{1}
$};
\node [font=\Huge] at (5.25,7.75) {$a_{0,0}$};
\node [font=\Huge] at (10,7.75) {$a_{0,1}$};
\node [font=\Huge] at (8.75,15.25) {....};
\node [font=\Huge] at (15,7.75) {$a_{1,0}$};
\node [font=\Huge] at (20.25,7.75) {$a_{1,1}$};
\draw [->, >=Stealth] (12.25,-1.75) -- (8,2.25);
\draw [->, >=Stealth] (13,-1.75) -- (17,2.25);
\draw [->, >=Stealth] (7.25,3.25) -- (5.25,7.25);
\draw [->, >=Stealth] (7.75,3.25) -- (9.75,7.25);
\draw [->, >=Stealth] (20,9.75) -- (20,12.25);
\draw [->, >=Stealth] (17.25,3.25) -- (15.5,7.25);
\draw [->, >=Stealth] (17.75,3.25) -- (19.75,7.25);
\node [font=\Huge, color={rgb,255:red,150; green,0; blue,0}] at (1.75,4.25) {Inconsistency between $a_{0,1}$};
\node [font=\Huge] at (20,12.75) {$a_{1,1\frown \eta}$};
\draw [line width=0.9pt, dashed] (20,8.25) -- (20,9.75);
\draw [ color={rgb,255:red,0; green,150; blue,0}, line width=0.9pt, short] (18.75,14) -- (21.25,14);
\draw [ color={rgb,255:red,0; green,150; blue,0}, line width=0.9pt, short] (21.25,14) -- (21.25,7.25);
\draw [ color={rgb,255:red,0; green,150; blue,0}, line width=0.9pt, short] (18.75,14) -- (18.75,7.75);
\draw [ color={rgb,255:red,0; green,150; blue,0}, line width=0.9pt, short] (18.75,7.75) -- (16.25,3.25);
\draw [ color={rgb,255:red,0; green,150; blue,0}, line width=0.9pt, short] (21.25,7.25) -- (18.25,2);
\draw [ color={rgb,255:red,0; green,150; blue,0}, line width=0.9pt, short] (16.25,3.25) -- (11.25,-2);
\draw [ color={rgb,255:red,0; green,150; blue,0}, line width=0.9pt, short] (18.25,2) -- (13,-3.5);
\draw [ color={rgb,255:red,0; green,150; blue,0}, line width=0.9pt, short] (13,-3.5) -- (11.25,-2);
\draw [ color={rgb,255:red,0; green,150; blue,0}, line width=0.9pt, ->, >=Stealth] (17,12.75) -- (18.25,12.75);
\node [font=\Huge, color={rgb,255:red,0; green,150; blue,0}] at (14.75,13.25) {Consistency};
\node [font=\Huge, color={rgb,255:red,0; green,150; blue,0}] at (14.75,12.25) {along a path
};
\draw [->, >=Stealth] (4.75,8.25) -- (2.75,12.25);
\draw [->, >=Stealth] (5.25,8.25) -- (7.25,12.25);
\node [font=\Huge] at (3,12.75) {$a_{0,0,0}$};
\node [font=\Huge] at (8,12.75) {$a_{0,0,1}$};
\node [font=\Huge] at (6.25,15.25) {$....$};
\node [font=\Huge] at (3.75,15.25) {$....$};
\node [font=\Huge] at (1.25,15.25) {$....$};
\draw [->, >=Stealth, dashed] (2.25,13.25) -- (1.5,14.75);
\draw [->, >=Stealth, dashed] (2.75,13.25) -- (3.75,14.75);
\draw [->, >=Stealth, dashed] (7,13.25) -- (6.25,14.75);
\draw [->, >=Stealth, dashed] (7.75,13.25) -- (8.5,14.75);
\draw [ color={rgb,255:red,150; green,0; blue,0} , line width=0.9pt ] (12,9) -- (9.25,9) -- (8,6.5) -- (10.75,6.5) -- cycle;
\draw [ color={rgb,255:red,150; green,0; blue,0}, line width=0.9pt, short] (3.75,6.5) -- (6.25,6.5);
\draw [ color={rgb,255:red,150; green,0; blue,0}, line width=0.9pt, short] (6.25,6.5) -- (8.75,11.5);
\draw [ color={rgb,255:red,150; green,0; blue,0}, line width=0.9pt, short] (3.75,6.5) -- (1.25,11.5);
\draw [ color={rgb,255:red,150; green,0; blue,0}, line width=1pt, dashed] (1.25,11.5) -- (0,14.25);
\draw [ color={rgb,255:red,150; green,0; blue,0}, line width=1pt, dashed] (8.75,11.5) -- (10,14.25);
\node [font=\Huge, color={rgb,255:red,150; green,0; blue,0}] at (1.5,3) {and any successor of $a_{0,0}$};
\end{circuitikz}
}%

\caption{The strong order property of the first kind}
\label{sop1}
\end{figure}

There are a lot of simplicity-like results characterizing NSOP$_1$-theories in terms of properties of Kim-forking. For example the following :

\begin{theorem}\label{symh} \cite[Corollary 4.9]{dobrowolski2022independence} A theory $T$ with existence is NSOP$_1$ if and only if Kim-forking satisfies symmetry in a monster model $\mathbb{M}$ of $T$, i.e. $a\forkindep^{K}_{e}b$ if and only if $b\forkindep^{K}_{e}a$ for any $a,b,e\in \mathbb{M}$.
\end{theorem}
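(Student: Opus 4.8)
The plan is to prove the two implications separately; the forward direction (existence and NSOP$_1$ imply symmetry of $\forkindep^{K}$) is the substantial one, and over a model it is the Kaplan--Ramsey symmetry theorem, so the real content is pushing it down to an arbitrary extension basis. The tool I would establish first is a version of \emph{Kim's lemma} over an arbitrary extension basis $e$: in an NSOP$_1$ theory, $\varphi(x,b_{0})$ Kim-divides over $e$ if and only if $\lbrace \varphi(x,b_{i}) : i<\omega \rbrace$ is inconsistent for \emph{every} $e$-Morley sequence $(b_{i})_{i<\omega}$ starting at $b_{0}$, not merely for some one. Existence guarantees that such Morley sequences exist, so the notion is well defined; the content is that the witnessing sequence is irrelevant. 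I would prove this by the usual tree/broom contradiction: if two Morley sequences disagreed about whether $\varphi$ divides, one could interleave them into a tree of parameters realizing the SOP$_1$ pattern, contradicting the hypothesis.

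With Kim's lemma available I would run the zig-zag argument for symmetry. Suppose toward a contradiction that $a\forkindep^{K}_{e}b$ while $b\forkindep^{K}_{e}a$ does not hold. The failure on the right yields a formula $\varphi(y,a)\in \tp(b/ea)$ that Kim-divides over $e$, witnessed by an $e$-Morley sequence $(a_{i})_{i<\omega}$ with $a_{0}=a$ along which $\lbrace \varphi(y,a_{i}) : i<\omega \rbrace$ is inconsistent. On the other side, $a\forkindep^{K}_{e}b$ together with the chain condition for Kim-dividing lets me find an $e$-indiscernible Morley sequence in $\tp(a/eb)$ that stays consistent against $b$. The heart of the argument is to interleave this pair of sequences into a single tree $(c_{\eta})_{\eta \in 2^{<\omega}}$ of parameters that is consistent along every branch but forces inconsistency between the characteristic incomparable nodes --- exactly an SOP$_1$ configuration for $\varphi$, contradicting NSOP$_1$.

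For the converse I would argue by contraposition: if $T$ has SOP$_1$ then $\forkindep^{K}$ is not symmetric. Fixing a formula and a tree $(a_{\eta})_{\eta \in 2^{<\omega}}$ witnessing SOP$_1$, the two defining clauses (consistency down each branch, pairwise inconsistency across the characteristic incomparable pairs) already encode the asymmetry to which Kim-dividing is sensitive. Extracting $e$-Morley sequences from the levels and branches of the tree --- existence guarantees $\forkindep^{K}$ is defined at all --- I would read off parameters $a$, $b$ and a base $e$ for which $a\forkindep^{K}_{e}b$ holds while $b\forkindep^{K}_{e}a$ fails. This direction is the more routine of the two.

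The step I expect to be the main obstacle is Kim's lemma over an arbitrary extension basis. Over a model one has global invariant types and may take invariant Morley sequences, which makes the tree-interleaving clean; over a general set $e$ these are unavailable, so I would instead work with \emph{tree Morley sequences} and carefully transfer inconsistency statements between a Morley sequence and its spread-out tree version. Once this lemma is secured, the symmetry zig-zag and the SOP$_1$ extraction are bookkeeping built on top of it.
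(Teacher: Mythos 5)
The paper does not actually prove this theorem: it is quoted, with citation, from Dobrowolski--Kim--Ramsey \cite[Corollary 4.9]{dobrowolski2022independence}, so there is no in-paper argument to compare against. Your sketch reconstructs the strategy of that cited source essentially faithfully --- Kim's lemma for Kim-dividing over an arbitrary extension basis, proved via tree Morley sequences as a substitute for the global invariant types available over models, then the Kaplan--Ramsey-style tree construction turning a failure of symmetry of $\forkindep^{K}$ into an SOP$_1$ configuration, and contraposition through an explicit SOP$_1$-induced asymmetry for the converse --- so, at the level of a plan, it is correct and follows the same route as the actual proof in the literature.
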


However the result that we will use here to show that a theory is NSOP$_1$ is the Kim-Pillay theorem for Kim-forking over arbitrary sets :

\begin{theorem}\label{kimpillaykim} \cite[Theorem 5.1]{chernikov2023transitivitylownessranksnsop1} Assume that $T$ satisfies existence. The theory $T$ is NSOP$_1$ if and only
if there is an invariant independence relation $\forkindep$ on small subsets of the monster model which satisfies the following properties, for an arbitrary set of parameters
$e$ and arbitrary tuples from $\mathbb{M}$.

\begin{enumerate}
    \item Strong finite character: If $a\centernot\forkindep_{e}b$ then there is a formula $\varphi(x,b,e)\in \tp(a/eb)$ such that $a'\centernot\forkindep_{e}b$ whenever $\models \varphi(a',b,e)$.
    \item Existence and extension: For any $a,e,B$ there is $a'\equiv_{e}a$ such that $a'\forkindep_{e}B$.
    \item Monotonicity: $aa'\forkindep_{e}bb'$ implies $a\forkindep_{e}b$.
    \item Symmetry: $a\forkindep_{e}b$ implies $b\forkindep_{e}a$.
    \item The independence theorem: If $a_{0}\equiv^{L}_{e}a_{1}$, $a_{i}\forkindep_{e}b_{i}$ for $i=0,1$ and $b_{0}\forkindep_{e}b_{1}$ there is an $a\in \mathbb{M}$ such that $a\equiv_{eb_{i}}a_{i}$ for $i=0,1$ and $a\forkindep_{e}b_{0}b_{1}$.
\end{enumerate}
Then $T$ is NSOP$_1$ and $\forkindep$ strengthens $\forkindep^{K}$, i.e. if $a\forkindep_{e}b$ then $a\forkindep^{K}_{e}b$. If $\forkindep$ also satisfies:

\begin{enumerate}
    \item[6.] Transitivity: If $a\forkindep_{e}b$ and $a\forkindep_{eb}c$ then $a\forkindep_{e}bc$.
    \item[7.] Local character: If $\kappa \geq \vert T \vert^{+}$ is a regular cardinal, $(A_{i})_{i<\kappa}$ is an increasing continuous sequence of sets of size $< \kappa$, $A_{\kappa} = \bigcup_{i<\kappa}A_{i}$ and $\vert A_{\kappa} \vert = \kappa$, then for any finite $d$, there is some $\alpha < \kappa$ such that $d\forkindep_{A_{\alpha}}A_{\kappa}$.
\end{enumerate}
Then $\forkindep$ and $\forkindep^{K}$ coincide : $a\forkindep_{e}b$ if and only if $a\forkindep^{K}_{e}b$ for any $e,a,b\in \mathbb{M}$.
\end{theorem}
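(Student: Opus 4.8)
The statement packages three assertions, and the plan is to treat them in turn. The forward direction of the equivalence, that an NSOP$_1$ theory with existence admits such a relation, requires no new construction: one takes $\forkindep$ to be $\forkindep^{K}$ itself and verifies 1--7. Properties 1--5 for Kim-independence are the content of the theory of Kim-independence (the independence theorem being the hard input), established over models by Kaplan--Ramsey and over arbitrary extension bases under existence in subsequent work, while 6--7 are precisely what the cited paper supplies; so I would simply cite these. The substance is the reverse direction: starting from an abstract invariant $\forkindep$ satisfying 1--5, show that $T$ is NSOP$_1$ and that $\forkindep$ strengthens $\forkindep^{K}$.

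I would first prove that $T$ is NSOP$_1$, arguing by contradiction from an SOP$_1$-formula $\varphi(x,y)$ and its tree (\cref{sop1}). The engine is a single application of the independence theorem. From the tree I would extract, over a suitable model $M$, two parameters $b_{0},b_{1}$ that are Lascar-conjugate and $\forkindep$-independent over $M$, together with realizations $a_{i}\models\varphi(x,b_{i})$ satisfying $a_{0}\equiv^{L}_{M}a_{1}$ and $a_{i}\forkindep_{M}b_{i}$, while the inconsistency clause of SOP$_1$ forces $\{\varphi(x,b_{0}),\varphi(x,b_{1})\}$ to be inconsistent. The independence theorem then amalgamates $a_{0},a_{1}$ into a single $a$ with $a\equiv_{Mb_{i}}a_{i}$, hence $a\models\varphi(x,b_{0})\wedge\varphi(x,b_{1})$, the desired contradiction. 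I expect this extraction --- turning the two-sided tree combinatorics into the exact hypotheses of the independence theorem (a conjugate, $\forkindep$-independent, jointly inconsistent pair), with the Lascar-strong-type bookkeeping and the use of strong finite character to locate the offending instance --- to be the main obstacle, and it is the step where invariance of $\forkindep$ is genuinely used. (Alternatively one could try to verify symmetry of Kim-forking and invoke \cref{symh}, but the direct route seems cleaner.)

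With NSOP$_1$ in hand I would prove $\forkindep\Rightarrow\forkindep^{K}$. Since $T$ is now NSOP$_1$ with existence, Kim-dividing over arbitrary sets satisfies Kim's lemma: its occurrence is detected uniformly by any sufficiently independent indiscernible Morley sequence, in particular by a $\forkindep$-Morley sequence, so it suffices to amalgamate a type along one such sequence. Fixing $a\forkindep_{e}b$, a formula $\varphi(x,b)\in\tp(a/eb)$, and an $e$-indiscernible $\forkindep$-independent sequence $(b_{i}:i<\omega)$ with $b_{0}=b$ (obtained by extension and a Ramsey-type extraction), I would build by induction elements $a_{n}$ with $a_{n}\models\{\varphi(x,b_{i}):i\le n\}$ and $a_{n}\forkindep_{e}b_{\le n}$: given $a_{n}$, extension yields a copy $a'\equiv_{e}a_{n}$ with $a'\forkindep_{e}b_{n+1}$ realizing $\varphi(x,b_{n+1})$, and since $b_{\le n}\forkindep_{e}b_{n+1}$ and the two sides are Lascar-conjugate over $e$, the independence theorem merges them into $a_{n+1}\forkindep_{e}b_{\le n+1}$ realizing $\varphi(x,b_{i})$ for all $i\le n+1$. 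Compactness makes $\{\varphi(x,b_{i}):i<\omega\}$ consistent, so $\varphi$ does not Kim-divide; hence $\tp(a/eb)$ does not Kim-fork and $a\forkindep^{K}_{e}b$.

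Finally, assuming also transitivity and local character, I would upgrade $\forkindep\Rightarrow\forkindep^{K}$ to equality. Both relations now satisfy the full list 1--7 and one strengthens the other, so I would invoke the canonicity of independence relations obeying these axioms: any two such relations, one implying the other, coincide. Concretely, given $a\forkindep^{K}_{e}b$, use extension for $\forkindep$ to produce $a'\equiv_{eb}a$ with $a'\forkindep_{e}b$ (so also $a'\forkindep^{K}_{e}b$), and then a Morley-sequence argument driven by local character and transitivity transports $\forkindep$-independence from $a'$ back to $a$, yielding $a\forkindep_{e}b$. Local character is exactly the ingredient that forbids $\forkindep$ from being strictly stronger than $\forkindep^{K}$; with it the two relations coincide, completing the proof.
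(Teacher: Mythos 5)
The paper itself gives no proof of \cref{kimpillaykim}: it is imported verbatim, with a citation, from Chernikov--Kim--Ramsey, so your proposal has to be measured against the literature proof it implicitly reconstructs (Chernikov--Ramsey's and Kaplan--Ramsey's criteria over models, adapted to arbitrary sets under existence in the cited paper). Your overall architecture does match that proof: $\forkindep^{K}$ itself witnesses the forward direction, the independence theorem refutes SOP$_1$, a zig-zag induction gives $\forkindep\Rightarrow\forkindep^{K}$, and transitivity plus local character upgrade the implication to equality.

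There is, however, a genuine gap in your second stage, at the sentence claiming that Kim-dividing ``is detected uniformly by any sufficiently independent indiscernible Morley sequence, in particular by a $\forkindep$-Morley sequence.'' Kim's lemma under existence applies to $\forkindep^{f}$-Morley sequences, which is how Kim-dividing is defined; a sequence that is merely $e$-indiscernible and independent for the abstract relation $\forkindep$ is not known to be $\forkindep^{f}$-Morley, because nothing in axioms 1--5 makes $\forkindep$ stronger than $\forkindep^{f}$. Indeed, in the paper's intended application one takes $\forkindep=\forkindep^{a}$, which in the properly NSOP$_1$ cases is strictly weaker than $\forkindep^{f}$, so your claim fails at face value there. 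Consequently, consistency of $\lbrace\varphi(x,b_{i}) : i<\omega\rbrace$ along your sequence does not refute Kim-dividing of $\varphi(x,b)$, and the argument does not close. The literature bridges this in the opposite direction: one runs the zig-zag along a genuine $\forkindep^{f}$-Morley (over models: coheir Morley) sequence, and proves from strong finite character, extension and invariance that such sequences are automatically $\forkindep$-independent, so that the base hypothesis $b_{\leq n}\forkindep_{e}b_{n+1}$ of the independence theorem becomes available. Over a model $M$ the bridging lemma is short: if $\tp(b/Ma)$ is finitely satisfiable in $M$ but $b\centernot\forkindep_{M}a$, the witnessing formula $\varphi(x,a,m)$ given by strong finite character is realized by some $m'\in M$, whence $m'\centernot\forkindep_{M}a$; but $m'$ is the unique realization of its type over $M$, so extension forces $m'\forkindep_{M}a$, a contradiction. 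This lemma, and its more delicate analogue over arbitrary sets (which is where existence genuinely enters), is the missing idea in your sketch. Secondarily, in the final stage the appeal to ``the canonicity of independence relations obeying these axioms'' is circular --- that canonicity is exactly the statement being proved --- although the concrete argument you sketch right after (extension for $\forkindep$, then local character and transitivity along a long Morley sequence to transport independence back to $a$) is the correct skeleton, and is where the real work of that direction sits.
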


This result allows us to show that a theory is NSOP$_1$ and to characterize $\forkindep^{K}$ over arbitrary sets by using some independence relations that arise naturally from algebraic structures.

\section{Constructing the limit}

In this section we define a set of hypotheses $(H)$ on a class $K$ of structures. We begin by showing that if $K$ satisfies $(H)$ then we can construct the limit theory $T$ of $K$. We show that this limit $T$ is an NSOP$_1$ theory, has existence, that $\forkindep^{K}=\forkindep^{a}$, that $\forkindep^{M}=\forkindep^{f}=\forkindep^{d}$ over arbitrary sets and that it has weak elimination of imaginaries. We give some basic examples of such classes $K$. The bases of this study are \cite[Section 6.3]{chernikov2016model} and \cite{baudisch2002generic}. The work of Baudisch takes place in a much more general context, but we focus here on the study of independence relations, which appears in his paper in \cite[Theorem 4.1]{baudisch2002generic} under some stronger hypotheses.

\vspace{10pt}
In this section we will consider the notion of morphism of $L$-structures for a given language $L$. To avoid any confusion with the notion of $L$-embedding we will remind the definition of these two notions. Let $M$ and $N$ be two $L$-structures and let $\varphi : M \rightarrow N$ be a map. 

\begin{enumerate}
    \item $\varphi$ is a morphism of $L$-structures if for any atomic formula $\psi(x)$ in $L$ and any $x$-tuple $a\in M$, $M\models \psi(a)$ implies $N \models \psi( \varphi(a))$. 
    \item $\varphi$ is an $L$-embedding if for any atomic formula $\psi(x)$ in $L$ and any $x$-tuple $a\in M$, $M\models \psi(a)$ holds if and only if $N \models \psi( \varphi(a))$ does.
\end{enumerate}

An other way to say this is that a morphism of $L$-structures preserves the satisfaction of positive quantifier free formulas and that an $L$-embedding preserves the satisfaction of all quantifier free formulas.

\subsection{Hypothesis on the class and basic examples}

\begin{definition}\label{hypchap2} We consider a countable language $L$ and a class $K$ of finite and countable $L$-structures with the following properties, which we will refer to as $(H)$:
\begin{enumerate}
    \item \textbf{Universal Axiomatisability}: If an $L$-structure $A$ is not in $K$ there is a quantifier free formula $\varphi$ and a finite tuple $a\in A$ such that $A\models \varphi(a)$ and $A'\centernot\models \varphi(a')$ for every $a'\in A'\in K$.
    \item \textbf{Quantifier Elimination}: For every quantifier free formula $\varphi(x,y)$ satisfied by $(a,b)$ in some structure $A$ of $K$ there is a quantifier free formula $\psi (x)$ such that $A\models \psi(a)$ and if $a'\in A'\in K$ and $A'\models \psi (a')$ then there is $A'\subseteq B' \in K$ an extension and $b'\in B$ such that $B\models\varphi(a',b')$.
    \item \textbf{Free Amalgamation}: If $C\subseteq A,B$ are structures in $K$ (where we allow $C= \emptyset$ when the language $L$ does not have any constants) there is $D\in K$ and some embeddings $i_{A},i_{B}$ of $A$ and $B$ respectively to $D$ satisfying $i_{A}\vert_{C}=i_{B}\vert_{C}$, $i_{A}(A)\cap i_{B}(B)=i_{A}(C)$, $D$ is generated by the images of $A$ and $B$ and such that given any morphisms of $L$-structures $\varphi_{A}: A\rightarrow D'$, $\varphi_{B}: B\rightarrow D'$ to $D'\in K$ such that $\varphi_{A}\vert_{C}=\varphi_{B}\vert_{C}$ there is a unique morphism $\varphi: D\rightarrow D'$ such that $\varphi_{A}=\varphi\circ i_{A}$ and $\varphi_{B}=\varphi\circ i_{B}$. We will write $A\oplus_{C}B$ for this structure $D$. We also assume that the elements of $K$ agree on the structure generated by the constants, which is equivalent to the joint embedding property when Free Amalgamation holds.
    \item \textbf{Generic Element}: If $V$ is a sort there is a structure $\langle \lbrace x \rbrace \rangle\in K$ generated by a single element $x\in V$ such that for every structure $A\in K$ and element $a\in V(A)$ there is a morphism $\varphi: \langle\lbrace x \rbrace \rangle\rightarrow A \in K$ sending $x$ to $a$.
    \item \textbf{Algebraically Independent 3-Amalgamation}: Given a commutative diagram in $K$ as on the left below, where the arrows are embeddings, $\varphi_{B_{i}}(B_{i})\cap \varphi_{A_{i}}(A)= \varphi_{A_{i}}\circ i_{A}(E)= \varphi_{B_{i}}\circ i_{B_{i}}(E)$ for $i=0,1$ and $\psi_{B_{0}}(B_{0})\cap \psi_{B_{1}}(B_{1})= \psi_{B_{0}}\circ i_{B_{0}}(E)= \psi_{B_{1}}\circ i_{B_{1}}(E)$, we can complete it inside of $K$ to a commutative diagram as on the right (where the arrows are embeddings) such that $\varphi_{D_{i}}(D_{i})\cap \varphi_{B}(B)= \varphi_{D_{i}}\circ \varphi_{B_{i}}(B_{i})=\varphi_{B}\circ \psi_{B_{i}}(B_{i})$ for $i=0,1$ and $\varphi_{D_{0}}(D_{0})\cap \varphi_{D_{1}}(D_{1})= \varphi_{D_{0}}\circ \varphi_{A_{0}}(A)=\varphi_{D_{1}}\circ \varphi_{A_{1}}(A)$ (the blank arrows of the right diagram are those of the left diagram; their names have been omitted for clarity).
\end{enumerate}

\begin{figure}[hbtp]
    \centering

\[\begin{tikzcd}
	& {D_{1}} &&&& {D_{1}} && D \\
	A && {D_{0}} && A && {D_{0}} \\
	& {B_{1}} && B && {B_{1}} && B \\
	E && {B_{0}} && E && {B_{0}}
	\arrow["{i_{B_{0}}}"', from=4-1, to=4-3]
	\arrow["{\psi_{B_{0}}}"', from=4-3, to=3-4]
	\arrow["{i_{B_{1}}}"', from=4-1, to=3-2]
	\arrow["{\varphi_{B_{0}}}"'{pos=0.2},from=4-3, to=2-3]
	\arrow["{i_{A}}"', from=4-1, to=2-1]
	\arrow["{\varphi_{A_{0}}}"'{pos=0.2}, from=2-1, to=2-3]
	\arrow["{\varphi_{B_{1}}}"'{pos=0.2}, from=3-2, to=1-2]
	\arrow["{\varphi_{A_{1}}}"'{pos=0.7}, from=2-1, to=1-2]
	\arrow["{\psi_{B_{1}}}"'{pos=0.3}, from=3-2, to=3-4]
	\arrow[from=4-5, to=4-7]
	\arrow[from=4-5, to=2-5]
	\arrow[from=2-5, to=2-7]
	\arrow[from=4-7, to=2-7]
	\arrow[from=4-7, to=3-8]
	\arrow[from=4-5, to=3-6]
	\arrow[from=2-5, to=1-6]
	\arrow["{\varphi_{D_{0}}}"', from=2-7, to=1-8]
	\arrow["{\varphi_{B}}"', from=3-8, to=1-8]
	\arrow[from=3-6, to=3-8]
	\arrow[from=3-6, to=1-6]
	\arrow["{\varphi_{D_{1}}}"', from=1-6, to=1-8]
\end{tikzcd}\]

    \caption{Algebraically Independent 3-Amalgamation}
\end{figure}										 
{\centering}
\end{definition}

Notice that we do not assume that there are countably many isomorphism classes (i.e. that the class $K$ is `essentially countable'), nor that the structures in $K$ are finitely generated. If $K$ satisfies Condition $1$ then $K$ has the hereditary property: If $A\subseteq B\in K$ is a substructure then $A\in K$. To check Condition $2\ $ for a formula $\varphi(x,y)$ realized by a tuple $(a,b)\in A$ it is enough to check it for a strengthening of this formula that is also realized by $(a,b)$ in $A$. 

\vspace{10pt}

Applying Condition $4\ $ to find $\langle \lbrace x \rbrace \rangle$ and then Condition $3$ to some $A\in K$ and $ \langle \lbrace x \rbrace \rangle$ above $\langle \emptyset \rangle$ gives us a structure $A\oplus_{\langle \emptyset \rangle} \langle \lbrace x \rbrace \rangle$ satisfying Condition $4\ $ above $A$, meaning that for every morphism $\varphi$ from $A$ to $B\in K$ and element $b\in V(B)$ there is a morphism $\varphi_{b} : A\oplus_{\langle \emptyset \rangle} \langle \lbrace x \rbrace \rangle \rightarrow B \in K$ sending $x$ to $b$ and extending $\varphi$.

\vspace{10pt}

We can define by induction $\langle \lbrace (x_{i})_{i<n}\rbrace \rangle$ for every $n<\omega$ the following way: $\langle \lbrace (x_{i})_{i<n+1}\rbrace \rangle$ is defined as $ \langle \lbrace (x_{i})_{i<n}\rbrace \rangle \oplus \langle \lbrace x_{n} \rbrace \rangle$. From this definition it is easy to show that $\langle \lbrace (x_{i})_{i<n}\rbrace \rangle$ satisfies the genericity property $4\ $ for $n$-tuples of elements of the sort $V$. Condition $1\ $ implies that $K$ is closed under unions of countable chains. With this and the previous remark we can define: $$\langle \lbrace (x_{i})_{i<\omega}\rbrace \rangle := \bigcup_{n<\omega}\langle \lbrace (x_{i})_{i<\omega}\rbrace \rangle.$$ 

This structure lies in $K$, since morphisms of $L$-structures are closed under unions of countable chains it is easy to show that it satisfies the genericity property $4\ $ for countable tuples of elements of the sort $V$.

\vspace{10pt}
The notation $ \langle \lbrace x \rbrace \rangle$ is here to insist on the fact that the set in between the braces is considered only as a set of generators. This notation $A\oplus_{\langle \emptyset \rangle} \langle\lbrace D \rbrace \rangle$ where $A,D\in K$ means the Generic Extension of $A$ with generators over $A$ indexed on $D$ and should not be confused with $A\oplus_{\langle \emptyset \rangle}  D $ which is the Free Amalgam of the structures $A,D\in K$ over the structure generated by the constants.

\vspace{10pt}

Condition $5\ $ is a strengthening of $3$-amalgamation for $\forkindep^{a}$; the fact that the images of $A$ and $B_{1}$ are disjoint over the image of $E$ in $D_{1}$ should be read as `$A$ and $B$ are algebraically independent inside of $D_{1}$' (which is written $A\forkindep^{a,D_{1}}_{E}B_{1}$ with the notations of \cite{aslanyan2023independence}, see this reference for some use of independence relation in categories). In the usual $3$-amalgamation we would ask that $D_{i}$ or $B$ are generated by the image of the two embeddings from $A$ and $B_{i}$ or from $B_{0}$ and $B_{1}$ respectively. Here however we allow ourselves to take an extension of the generated structure which will be useful when adding structure in Section 4.2.

\vspace{10pt}

 Some examples of classes $K$ satisfying $(H)$ are one based stable theories that can be presented as Fraïssé limits: The class of countable vector spaces over a finite (or countable) field (which is in the language), the class of countable abelian groups, the class of countable sets with no structure. In all those cases the conditions $1\ -4\ $ are easy to check, and the Condition $5\ $ is a consequence of the following proposition. An other example is the class of countable graph, which is not stable.

\begin{lemma}\label{feuvrier} Suppose that some class $K$ satisfies $1-4$ and that free independence and algebraic independence coincide, i.\ e.\  that for every $E\subseteq A,B\in K$ such that $A,B\subseteq D\in K$, the canonical map $\varphi : A\oplus_{E}B\rightarrow D$ is an embedding if and only if $A\cap B=E$. Then Condition $5\ $ holds in $K$. The classes associated to one based stable theories that we quoted previously satisfy this condition.
\end{lemma}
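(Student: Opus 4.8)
The plan is to produce $D$ by amalgamating the three given structures freely, in two steps, and to recover every required intersection from the free amalgamation axiom (Condition $3$) together with the hypothesis that free and algebraic independence coincide. First I would form $Z := D_{0}\oplus_{B_{0}}B$, the free amalgam of $D_{0}$ and $B$ over the common copy of $B_{0}$ (identified via $\varphi_{B_{0}}$ and $\psi_{B_{0}}$); this lives in $K$ and satisfies $D_{0}\cap B = B_{0}$ inside $Z$. A short diagram chase then gives the auxiliary intersections computed \emph{concretely} in $Z$: from $D_{0}\cap B = B_{0}$ and $B_{0}\cap B_{1}=E$ one gets $D_{0}\cap B_{1}=E$ (hence also $A\cap B_{1}=E$, as $A\subseteq D_{0}$), and from $A\subseteq D_{0}$ one gets $A\cap B\subseteq A\cap B_{0}=E$, so $A\cap B=E$. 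Since $A\cap B_{1}=E$, the coincidence hypothesis makes $\langle A,B_{1}\rangle_{Z}\cong A\oplus_{E}B_{1}=:C_{1}$. I then set $D := Z\oplus_{C_{1}}D_{1}$, amalgamating $Z$ and $D_{1}$ over $C_{1}$ (the copy of $C_{1}\cong A\oplus_{E}B_{1}$ inside $D_{1}$ exists because $A\cap B_{1}=E$ in $D_{1}$, and is glued to the copy in $Z$ by the canonical isomorphism fixing $A$ and $B_{1}$). By Condition $3$, $Z\cap D_{1}=C_{1}$ in $D$, and $Z\hookrightarrow D$, $D_{1}\hookrightarrow D$ are embeddings. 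Taking $\varphi_{D_{0}},\varphi_{B}$ to be the composites through $Z$ and $\varphi_{D_{1}}$ the inclusion of $D_{1}$, commutativity of the completed diagram is routine bookkeeping of the successive amalgamation bases (e.g.\ $\varphi_{D_{1}}\circ\varphi_{B_{1}}=\varphi_{B}\circ\psi_{B_{1}}$ because $B_{1}\subseteq C_{1}$ is the shared base).

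It remains to check the two families of intersection conditions. The case $i=0$ is immediate: since $Z\hookrightarrow D$ is an embedding, $\varphi_{D_{0}}(D_{0})\cap\varphi_{B}(B)=D_{0}\cap B=B_{0}$ already inside $Z$. The other two conditions reduce, using $Z\cap D_{1}=C_{1}$ and $B\subseteq Z$, to the identities $D_{0}\cap D_{1}=D_{0}\cap C_{1}$ and $D_{1}\cap B=C_{1}\cap B$ computed in $Z$; so everything comes down to evaluating $D_{0}\cap C_{1}$ and $C_{1}\cap B$.

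The heart of the matter is the following claim, to be applied twice: if $Z$ contains substructures $E\subseteq A'\subseteq M$ and $E\subseteq N$ with $M\cap N=E$, then writing $P:=\langle A',N\rangle$ one has $M\cap P=A'$. The inclusion $A'\subseteq M\cap P$ is clear, and the reverse inclusion is the main obstacle. The naive idea — apply the coincidence hypothesis to the amalgam $M\oplus_{A'}P\to Z$ — is \emph{circular}, since coincidence only translates ``$M\oplus_{A'}P$ embeds'' into ``$M\cap P=A'$'', which is what we want. The way around this is a \emph{change of base}: because $M\cap N=E$ has been computed concretely in $Z$, coincidence legitimately yields an embedding $M\oplus_{E}N\hookrightarrow Z$ with image $\langle M,N\rangle$. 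Inside $M\oplus_{E}N$ I then invoke associativity of free amalgamation, $M\oplus_{E}N\cong M\oplus_{A'}(A'\oplus_{E}N)=M\oplus_{A'}P$, an isomorphism fixing $M$ and $N$ (hence carrying the factor $A'\oplus_{E}N$ to $\langle A',N\rangle$); this associativity is a routine comparison of universal properties, since a morphism out of either side amounts to a pair of morphisms from $M$ and $N$ agreeing on $E$. In the presentation $M\oplus_{A'}P$, Condition $3$ gives $M\cap P=A'$ outright, and as isomorphisms and embeddings preserve intersections of substructures (injectivity alone gives $f(S\cap T)=f(S)\cap f(T)$), this transports back to $M\cap P=A'$ in $Z$. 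Applying the claim with $(M,N,A')=(D_{0},B_{1},A)$ gives $D_{0}\cap C_{1}=A$, hence $D_{0}\cap D_{1}=A$ (matching the images of $A$ from $D_{0}$ and $D_{1}$, which coincide since $A\subseteq C_{1}$); applying it with $(M,N,A')=(B,A,B_{1})$ gives $C_{1}\cap B=B_{1}$, hence $D_{1}\cap B=B_{1}$. Together with the case $i=0$, this establishes Condition $5$.

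Finally, for the quoted examples I would argue that the hypothesis holds. For sets with no structure the free amalgam over $E$ is the disjoint union over $E$, and for vector spaces over a fixed field or abelian groups it is the pushout (the sum amalgamated over $E$); in each case the lattice of substructures is modular, so the canonical map $A\oplus_{E}B\to D$ is an embedding exactly when $A\cap B=E$. Thus free and algebraic independence coincide, and Condition $5$ follows from the argument above.
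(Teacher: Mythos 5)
Your proof is correct, and it builds the same object as the paper, but organized differently. The paper forms the symmetric double amalgam $D=[D_{0}\oplus_{B_{0}}B]\oplus_{A\oplus_{E}B}[D_{1}\oplus_{B_{1}}B]$ (after first checking that $A\cap B=E$ inside each factor, so that $A\oplus_{E}B$ embeds into both), reads off $D_{i}\cap B=B_{i}$ and $D_{0}\cap D_{1}\subseteq\langle AB\rangle$ directly from Condition $3$, and then computes $D_{0}\cap D_{1}=A$ by a short forking calculus for $\forkindep^{a}$ --- base monotonicity and transitivity --- where base monotonicity is justified by a forward reference to \cref{gammapropP} combined with the coincidence hypothesis. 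Your sequential construction $Z=D_{0}\oplus_{B_{0}}B$, then $D=Z\oplus_{C_{1}}D_{1}$ with $C_{1}=\langle AB_{1}\rangle_{Z}\cong A\oplus_{E}B_{1}$, yields an isomorphic $D$: by the universal properties, a morphism out of either version amounts to a pair of morphisms from $Z$ and $D_{1}$ agreeing on $A$ and $B_{1}$. Your key claim ($M\cap N=E$ and $E\subseteq A'\subseteq M$ imply $M\cap\langle A'N\rangle=A'$) is precisely the instance of base monotonicity for $\forkindep^{a}$ that the paper invokes, and your proof of it via the change of base $M\oplus_{E}N\cong M\oplus_{A'}(A'\oplus_{E}N)$ is the same ``two adjacent pushouts'' observation that underlies the base monotonicity argument in \cref{gammapropP}. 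So the mathematical content coincides, but your version is self-contained (no forward reference to a later proposition), makes explicit exactly where each intersection hypothesis of Condition $5$ enters (the chase giving $D_{0}\cap B_{1}=E$ and $A\cap B=E$ inside $Z$), and correctly identifies and avoids the circularity in applying the coincidence hypothesis over the base $A'$ instead of $E$; the paper's version buys brevity and symmetry in the two indices $i=0,1$.

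One cosmetic quibble on your last paragraph: ``modularity of the substructure lattice'' is not quite the operative fact for the quoted examples. What is needed is the direct verification that the canonical map $A\oplus_{E}B\to\langle AB\rangle$ is injective whenever $A\cap B=E$: for sets the amalgam is the disjoint union over $E$, and for abelian groups or vector spaces it is $(A\oplus B)/\lbrace(e,-e)\ :\ e\in E\rbrace$, whose map onto $A+B$ has trivial kernel exactly when $A\cap B=E$. Since the paper asserts this sentence without proof, this does not affect the comparison.
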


\begin{figure}[hbtp]
    \centering

\begin{tikzcd}
                                     &                             &                             &                                        & D_{1}\oplus_{B_{1}}B \arrow[rr] &                                 & D \\
                                     & D_{1} \arrow[rrru]          &                             & A\oplus_{E}B \arrow[ru] \arrow[rr]     &                                 & D_{0}\oplus_{B_{0}}B \arrow[ru] &   \\
A \arrow[ru] \arrow[rr] \arrow[rrru] &                             & D_{0} \arrow[rrru]          &                                        &                                 &                                 &   \\
                                     & B_{1} \arrow[uu] \arrow[rr] &                             & B \arrow[uu] \arrow[ruuu] \arrow[rruu] &                                 &                                 &   \\
E \arrow[uu] \arrow[rr] \arrow[ru]   &                             & B_{0} \arrow[ru] \arrow[uu] &                                        &                                 &                                 &  
\end{tikzcd}

    \caption{Proving Algebraically Independent 3-Amalgamation}
\end{figure}										 
{\centering}

\begin{proof} These hypotheses imply in particular that algebraic independence $\forkindep^{a,D}$ satisfies base monotonicity in the structures of $K$ (it is shown later in \cref{gammapropP} that free independence always satisfies base monotonicity, and it is clear that $\forkindep^{a,D}$ satisfies transitivity). Given $E,A,B_{0},B_{1},D_{0},D_{1}$ and a diagram satisfying the assumptions of $6$ consider the diagram above, where $D:=[D_{0}\oplus_{B_{0}}B]\oplus_{A\oplus_{E}B}[D_{1}\oplus_{B_{1}}B]$.

\vspace{10pt}
We want to show that $D_{0}\cap D_{1}=A$, $D_{0}\cap B=B_{0}$ and $D_{1}\cap B=B_{1}$ inside of $D$ (i.\ e.\ $D_{0}\forkindep^{a,D}_{A}D_{1}$, $D_{0}\forkindep^{a,D}_{B_{0}}B$ and $D_{1}\forkindep^{a,D}_{B_{1}}B$). We abuse notation and write $\langle AB\rangle$ for the substructure of $D$ generated by the images of $A$ and $B$, and for substructures $F,G,H$ of $D$ such that $F\cap G=H$, we will indifferently write $\langle FG\rangle$ and $F\oplus_{H}G$ for the generated structure.

\vspace{10pt}
The maps from $A\oplus_{E}B$ to $D_{i}\oplus_{B_{i}}B$ come from the fact that $A$ and $B$ are free over $E$ in $D_{i}\oplus_{B_{i}}B$. In fact if some element of $D_{i}\oplus_{B_{i}}B$ is in the image of both $A$ and $B$, then it is also in the image of $D_{i}$, so it is in the image of $B_{i}$, and by assumption $B_{i}$ and $A$ are free over $E$ in $D_{i}$. This diagram is commutative and all arrows are embeddings. If some element of $D$ is in the image of $D_{i}$ and $B$ then it is in the image of $B_{i}$, so $D_{i}\forkindep^{a,D}_{B_{i}}B$ for $i\in \lbrace 0,1\rbrace$. If some element of $D$ is in the image of $D_{0}$ and $D_{1}$ then it is in the image of $A\oplus_{E}B$ which is just the substructure generated by the images of $A$ and $B$.

\vspace{10pt}
Since $D_{i}\forkindep^{a,D}_{B_{i}}B$ by base monotonicity $D_{i}\forkindep^{a,D}_{\langle AB_{i}\rangle}\langle AB\rangle$, so it is enough to show that $\langle AB_{0}\rangle\forkindep^{a,D}_{A} \langle AB_{1}\rangle$. Now $A\forkindep^{a,D}_{E}B$ since $D_{0}\forkindep^{a,D}_{B_{0}}B$ and $A\forkindep^{a,D}_{E} B_{0}$. By base monotonicity $\langle AB_{0}\rangle\forkindep^{a,D}_{B_{0}} \langle B_{0}B_{1}\rangle$, since $B_{0}\forkindep^{a,D}_{E}B_{1}$ transitivity yields $\langle AB_{0}\rangle \forkindep^{a,D}_{E} B_{1}$, so $\langle AB_{0}\rangle \forkindep^{a,D}_{A} \langle AB_{1}\rangle$ by base monotonicity and Condition $5$ holds.\end{proof}

\subsection{Properties of the limit theory}

We begin by constructing the limit theory $T$ of $K$ and then we prove some of its properties. We shall assume the continuum hypothesis as we need an uncountable regular cardinal $\kappa$ such that $2^{<\kappa}=\kappa$, and it is convenient to take $\aleph_{1}$ as such.

\begin{definition} Given an $L$-structure $M$ and a cardinal $\kappa$ the \emph{$\kappa$-age of $M$} is the class of the substructures of $M$ of cardinality $<\kappa$.
\end{definition}

\begin{definition}
Let $\mathcal{K}$ be a class of countable structures in some countable language $\mathcal{L}$. An $\mathcal{L}$-structure $\mathcal{M}$ is \emph{$\mathcal{K}$-$\aleph_{1}$-saturated} if its $\aleph_{1}$-age is $\mathcal{K}$ and if for every $\mathcal{U},\mathcal{B}\in \mathcal{K}$ countable and any embeddings $f_{0} : \mathcal{U} \rightarrow \mathcal{M}$, $f_{1} : \mathcal{U} \rightarrow \mathcal{B}$ there is some embedding $g : \mathcal{B} \rightarrow \mathcal{M}$ such that $g\circ f_{1}=f_{0}$.
\end{definition}

\begin{prop}\label{construcMP}
There exists an $L$-structure $\mathcal{M}$ which is $K$-$\aleph_{1}$-saturated and has cardinal $\aleph_{1}$. Every structure of $\aleph_{1}$-age included in $K$ and of cardinality $\leq \aleph_{1}$ embeds into $\mathcal{M}$, and $\mathcal{M}$ is unique up to isomorphism.
\end{prop}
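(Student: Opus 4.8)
The plan is to carry out the classical Fraïssé construction along a chain of length $\aleph_{1}$ rather than $\omega$, using the continuum hypothesis to keep the amount of bookkeeping equal to $\aleph_{1}$. First I would build an increasing continuous chain $(\mathcal{M}_{\alpha})_{\alpha<\aleph_{1}}$ of countable members of $K$, starting from $\mathcal{M}_{0}=\langle \emptyset \rangle$ (the structure generated by the constants, on which all members of $K$ agree by Condition $3$), and set $\mathcal{M}=\bigcup_{\alpha<\aleph_{1}}\mathcal{M}_{\alpha}$. At limit stages I take unions, which stay in $K$ because Condition $1$ makes $K$ closed under unions of countable chains. At a successor stage $\alpha+1$ I treat a single amalgamation problem, namely a countable substructure $\mathcal{U}\subseteq \mathcal{M}_{\alpha}$ together with an embedding $f_{1}:\mathcal{U}\to \mathcal{B}$ into a countable $\mathcal{B}\in K$: I put $\mathcal{M}_{\alpha+1}=\mathcal{M}_{\alpha}\oplus_{\mathcal{U}}\mathcal{B}$ using the amalgam from Condition $3$, which lies in $K$ and is countable since it is generated by the images of two countable structures. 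Identifying $\mathcal{M}_{\alpha}$ with its image, the embedding of $\mathcal{B}$ into $\mathcal{M}_{\alpha+1}$ is then the extension required by the saturation clause for this problem. By also amalgamating a fresh generic element from Condition $4$ cofinally often I can arrange that the union has cardinality exactly $\aleph_{1}$.

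The crux, and the place where the continuum hypothesis and the regularity of $\aleph_{1}$ enter, is the bookkeeping ensuring that \emph{every} amalgamation problem is eventually treated. Since $2^{\aleph_{0}}=\aleph_{1}$, a countable structure has at most $\aleph_{1}$ countable substructures and there are at most $\aleph_{1}$ isomorphism types of pairs $(\mathcal{U}\hookrightarrow\mathcal{B})$ with $\mathcal{B}$ countable, so the totality of problems that can ever arise has size $\aleph_{1}$. I would fix a master schedule of length $\aleph_{1}$ in which each problem is listed cofinally often, and at stage $\alpha$ treat the scheduled problem provided its domain $\mathcal{U}$ already lies in $\mathcal{M}_{\alpha}$ (doing nothing otherwise). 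Because $\aleph_{1}$ is regular, every countable substructure $\mathcal{U}$ of the final $\mathcal{M}$ is contained in some $\mathcal{M}_{\alpha}$, and since its problems recur cofinally each is genuinely handled at a later stage; this gives the extension property of $K$-$\aleph_{1}$-saturation. That the $\aleph_{1}$-age of $\mathcal{M}$ is exactly $K$ then follows from two facts: any countable substructure of $\mathcal{M}$ sits inside some $\mathcal{M}_{\alpha}\in K$, hence lies in $K$ by the hereditary property coming from Condition $1$; and every $\mathcal{B}\in K$ embeds into $\mathcal{M}$ by applying the extension property with $\mathcal{U}=\langle\emptyset\rangle$, which is where joint embedding (the agreement on constants in Condition $3$) is used.

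For universality, given $\mathcal{A}$ with $\aleph_{1}$-age contained in $K$ and $\lvert \mathcal{A}\rvert\leq\aleph_{1}$, I would write $\mathcal{A}=\bigcup_{\alpha<\aleph_{1}}\mathcal{A}_{\alpha}$ as an increasing continuous union of countable substructures (each in $K$) and construct embeddings $h_{\alpha}:\mathcal{A}_{\alpha}\to\mathcal{M}$, increasing and continuous, passing from $h_{\alpha}$ to $h_{\alpha+1}$ by one application of the saturation property of $\mathcal{M}$ (with $\mathcal{U}=\mathcal{A}_{\alpha}$, $f_{0}=h_{\alpha}$, and $f_{1}$ the inclusion $\mathcal{A}_{\alpha}\hookrightarrow\mathcal{A}_{\alpha+1}$); the union $\bigcup_{\alpha}h_{\alpha}$ is the desired embedding. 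For uniqueness I would run a back-and-forth of length $\aleph_{1}$ between two such structures $\mathcal{M},\mathcal{N}$: enumerating both in order type $\aleph_{1}$ and maintaining a continuous increasing chain of isomorphisms $g_{\alpha}$ between countable substructures, I alternately force the next element of $\mathcal{M}$ into the domain and the next element of $\mathcal{N}$ into the range, each extension being one use of the saturation property (of $\mathcal{N}$ on the forth step, of $\mathcal{M}$ on the back step) applied to the substructure generated by the current domain together with the new point. Taking unions at limits, after $\aleph_{1}$ steps the union is an isomorphism $\mathcal{M}\cong\mathcal{N}$.

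I expect the genuinely delicate part to be the bookkeeping in the existence step: unlike the countable case, new amalgamation problems keep appearing at every stage and there may be $\aleph_{1}$ of them, so the schedule must be organized so that each problem, once its domain has entered the chain, is still attended to at a later stage. The combination of the continuum hypothesis (bounding the number of problems by $\aleph_{1}$) and the regularity of $\aleph_{1}$ (confining each countable substructure inside some $\mathcal{M}_{\alpha}$) is precisely what makes this dovetailing succeed; everything else is a routine transfinite adaptation of the standard Fraïssé arguments.
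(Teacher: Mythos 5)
Your proof is correct and takes essentially the same route as the paper: a continuous increasing chain of countable members of $K$ of length $\aleph_{1}$, with the continuum hypothesis bounding the supply of amalgamation problems by $\aleph_{1}$, the regularity of $\aleph_{1}$ trapping each countable substructure of $\mathcal{M}$ inside some stage, and standard transfinite Fra\"iss\'e arguments for universality and back-and-forth uniqueness. The only difference is the bookkeeping implementation: the paper enumerates isomorphism representatives $(\mathcal{B}_{k})_{k<\aleph_{1}}$ together with all embeddings $\mathcal{U}_{j}\rightarrow \mathcal{B}_{k}$ and resolves the countably many problems indexed by triples $j,k,l<i$ simultaneously at stage $i$ (via a claim built from an $\omega$-chain of free amalgams), whereas you treat one scheduled problem per stage with each problem recurring cofinally --- both are routine dovetailing devices resting on exactly the same uses of CH and regularity.
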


\begin{proof}
Let $(\mathcal{B}_{i})_{i<\aleph_{1}}$ enumerate the elements of $K$ up to isomorphism. We construct by induction an chain $(\mathcal{U}_{i})_{i<\aleph_{1}}$ of $L$-structures in $K$. Let $\mathcal{U}_{0}=\mathcal{B}_{0}$. If $\mathcal{U}_{i}$ has been defined, for every $k<\aleph_{1}$ let $(f^{\mathcal{U}_{i},k}_{j})_{j<\aleph_{1}}$ enumerate the embeddings of $\mathcal{U}_{i}$ into $\mathcal{B}_{k}$. We begin by showing that we can extend a countable number of partial embeddings in $K$.

\vspace{10pt}
\textbf{Claim:} If $\mathcal{U}\in K$, $B_{i}\in K$ for $i<\omega$ and $f_{i} : \mathcal{U}\rightarrow B_{i}$ are partial embeddings, then there is an extension $\mathcal{U}\subseteq \mathcal{U}'\in K$ and embeddings $g_{i} : B_{i}\rightarrow \mathcal{U}'$ such that $g_{i}\circ f_{i}$ is the identity on its domain for all $i<\omega$. 

\vspace{10pt}
\textit{Proof of the claim:} We build an increasing sequence $(\mathcal{U}'_{i})_{i<\omega}$ of extensions of $\mathcal{U}$. Let $E_{i}\subseteq \mathcal{U}$ be the domain of $f_{i}$ for $i<\omega$. Consider the extensions $E_{0}\subseteq \mathcal{U}$ and $f_{0} : E_{0} \rightarrow B_{0}$. By Free Amalgamation we can define $\mathcal{U}'_{0}=\mathcal{U}\oplus_{E_{0}}B_{0}\in K$, and we set $g_{0}$ to be the natural embedding $B_{0}\rightarrow \mathcal{U}'_{0}$. Assume that $\mathcal{U}'_{i-1}$ and $g_{i-1}$ are defined for $i-1<\omega$. Consider the extensions $E_{i}\subseteq \mathcal{U}'_{i-1}$ and $f_{i} : E_{i} \rightarrow B_{i}$. We can define $\mathcal{U}'_{i}=\mathcal{U}'_{i-1}\oplus_{E_{i}}B_{i}$, and we set $g_{i}$ to be the natural embedding $B_{i}\rightarrow \mathcal{U}'_{i}$. Now let $\mathcal{U}':=\bigcup\limits_{i<\omega}\mathcal{U}'_{i}$. Since $K$ is closed under unions of countable chains this is a structure in $K$ and it clearly satisfies the condition.\hspace*{0pt}\hfill\qedsymbol{}

\vspace{15pt}

We can now construct the chain. Assume that $\mathcal{U}_{j}$ has been defined for $j<i<\aleph_{1}$. Let $\mathcal{U}':= \bigcup\limits_{j<i}\mathcal{U}_{j}$. Consider the sequence of embeddings $(f^{\mathcal{U}_{j},k}_{l})_{j,k,l<i}$ seen as partial embeddings on $\mathcal{U'} \supseteq \mathcal{U}_{j}$. By the claim there is an extension $\mathcal{U}_{i} \supseteq \mathcal{U}'$ and a sequence of embeddings $(g^{\mathcal{U}_{j},k}_{l})_{j,k,l<i}: \mathcal{B}_{k} \rightarrow \mathcal{U}_{i}$ such that $g^{\mathcal{U}_{j},k}_{l}\circ f^{\mathcal{U}_{j},k}_{l}$ is the identity on its domain $\mathcal{U}_{j}$ for every $j,k,l<i$. We define $\mathcal{M}:= \bigcup\limits_{i<\aleph_{1}}\mathcal{U}_{i}$.

\vspace{10pt}

Now we show that $\mathcal{M} $ is $K$-$\aleph_{1}$-saturated. Let $\mathcal{U}$ be a countable substructure of $\mathcal{M}$. By isomorphic correction we can assume that the map $f_{0}$ of the saturation property is the inclusion. Then $\mathcal{U}\subseteq \mathcal{U}_{j}$ for some $j<\aleph_{1}$ so $\mathcal{U}\in K$ by Hereditarity thus $\aleph_{1}$-age($\mathcal{U}$)$\subseteq K$.

\vspace{10pt}
Consider an embedding $f: \mathcal{U} \rightarrow \mathcal{B} \in K$. $\mathcal{B}$ is isomorphic to $\mathcal{B}_{k}$ for some $k<\aleph_{1}$ via $\varphi : \mathcal{B} \rightarrow \mathcal{B}_{k}$. By Free Amalgamation there is some $k'<\aleph_{1}$ and some embeddings $g : \mathcal{B}_{k} \rightarrow \mathcal{B}_{k'}$ and $f': \mathcal{U}_{j} \rightarrow \mathcal{B}_{k'}$ such that $g \circ \varphi \circ f = f'$ (here $k'$ is such that $B_{k'}$ is isomorphic to $\mathcal{U}_{i}\oplus_{\mathcal{U}}\mathcal{B}_{k}$). Now $f': \mathcal{U}_{j} \rightarrow \mathcal{B}_{k'}$ is equal to $f^{\mathcal{U}_{j},k'}_{l}$ for some $l<\aleph_{1}$. Set $i=max\lbrace j,k,l\rbrace$, by construction, there is a $h=g^{\mathcal{U}_{j},k'}_{l} : \mathcal{B}_{k'} \rightarrow \mathcal{U}_{i+1}$ such that $h\circ f'$ is the inclusion $\mathcal{U}_{j}\subseteq\mathcal{U}_{i+1}$. Then $h\circ g \circ \varphi$ is the embedding we want.

\begin{figure}[hbtp]
    \centering
    
\begin{tikzcd}
\mathcal{U}_{i} \arrow[rrr, "\subseteq" description]                       &  &                                              & \mathcal{U}_{i+1}\subseteq \mathcal{M} \\
\mathcal{U}_{j} \arrow[u, "\subseteq" description] \arrow[rrr, "f'" description]             &  &                                              & \mathcal{B}_{k'} \arrow[u, "h"']       \\
\mathcal{U} \arrow[rr, "f" description] \arrow[u, "\subseteq" description] &  & \mathcal{B} \arrow[r, "\varphi" description] & \mathcal{B}_{k} \arrow[u, "g"']       
\end{tikzcd}
\caption{The embedding we needed}

\end{figure}

For the second point we proceed as in Fraïssé theory and only use the property of $\aleph_{1}$-saturation and the fact that we can amalgamate over the structure generated by the constants: if $\mathcal{M}'$ is a structure of age inside of $K$ and of cardinality $\leq \aleph_{1}$ we fix an enumeration $(m_{i})_{i<\aleph_{1}}$ of $\mathcal{M}'$ and define by induction an increasing sequence of embeddings $f_{i}:\langle m_{<i}\rangle \rightarrow \mathcal{M}$, then the union $f = \bigcup\limits_{i<\aleph_{1}} f_{i}:\mathcal{M} \rightarrow \mathcal{M}'$ is our embedding.

\vspace{10pt}
Let $f_{0}$ be the embedding of the constants from $\mathcal{M}'$ to $\mathcal{M}$ (possibly $f_{0}$ is the empty set). If $f_{j}$ is defined for $j<i<\aleph_{1}$, let $f':= \bigcup\limits_{j<i}f_{j} : \mathcal{B}'\subseteq \mathcal{M'}\rightarrow \mathcal{M}$.

\vspace{10pt}
Let $\mathcal{B}$ be the image of $M$ and $g' : \mathcal{B}\rightarrow \mathcal{B}'$ be the inverse map of $f'$, we can compose $g'$ with the inclusion $\mathcal{B}' \subseteq \langle m_{<i} \rangle$. Since $\langle m_{<i} \rangle \in K$ we can use the $\aleph_{1}$-saturation of $\mathcal{M}$ to find a map $f_{i}:\langle m_{<i}\rangle \rightarrow \mathcal{M}$ such that $f_{i}\circ g' =id_{B}$, which implies that $f_{i}\vert_{B'}=f$ by pre-composing with $f$, i.\ e.\  that $f_{i}$ extends $f_{j}$ for $j<i$. In a similar fashion, to show uniqueness we construct an isomorphism by back and forth.\end{proof}

Let $T$ be the theory of $\mathcal{M}$ constructed in \cref{construcMP}, we refer to $T$ as the \emph{limit theory} of $K$. It is a complete theory so we can consider a monster model $\mathbb{M}$ of $T$.

\begin{lemma}\label{qeparam} $T$ has quantifier elimination in $L$.
\end{lemma}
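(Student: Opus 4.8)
The plan is to apply the standard substructure criterion for quantifier elimination: $T$ has quantifier elimination if and only if, for all models $\mathcal{M}_{1},\mathcal{M}_{2}\models T$ sharing a common substructure $A$ (i.e.\ $A$ embeds into each), every quantifier free formula $\varphi(\bar v,w)$ and every tuple $\bar a\in A$, one has $\mathcal{M}_{1}\models \exists w\,\varphi(\bar a,w)\Rightarrow \mathcal{M}_{2}\models\exists w\,\varphi(\bar a,w)$. By swapping the two models this is in fact an equivalence, and it is the usual reformulation of the fact that each existential formula over a qf matrix is a union of quantifier free type classes, hence (by compactness and induction on the number of quantifiers) equivalent modulo $T$ to a quantifier free formula. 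When $\bar v$ is empty the statement concerns a sentence and follows at once from the completeness of $T$, since $\mathcal{M}_{1}\equiv\mathcal{M}_{2}$; so I may assume $\bar v$ is nonempty and take $A=\langle\bar a\rangle$ finitely generated. Here I use a single existential variable $w$, the general case following by iterating the single-quantifier elimination.

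First I would record that every finitely generated substructure of a model of $T$ lies in $K$. Condition $1$ (Universal Axiomatisability) shows that $K$ is exactly the class of finite or countable $L$-structures satisfying the universal theory $T_{\forall}$ made of the sentences $\forall\bar x\,\neg\varphi(\bar x)$ for those quantifier free $\varphi$ realized in no member of $K$. Since the $\aleph_{1}$-age of $\mathcal{M}$ is $K$ we have $\mathcal{M}\models T_{\forall}$, hence $T\models T_{\forall}$, and as $T_{\forall}$ is universal it passes to substructures. Thus $A=\langle\bar a\rangle\in K$, and for any witness $b$ of $\mathcal{M}_{1}\models\varphi(\bar a,b)$ the substructure $\langle\bar a,b\rangle$ also belongs to $K$.

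The heart of the argument is to produce from $\varphi$ a single quantifier free formula $\psi(\bar v)$ with $\langle\bar a,b\rangle\models\psi(\bar a)$ and with $T\models\forall\bar v\,(\psi(\bar v)\to\exists w\,\varphi(\bar v,w))$. For the first two points I apply Condition $2$ (Quantifier Elimination of the class) to $\varphi$, as realized by $(\bar a,b)$ in $\langle\bar a,b\rangle\in K$: this yields a quantifier free $\psi(\bar v)$ with $\langle\bar a,b\rangle\models\psi(\bar a)$ and the extension property that whenever $A'\in K$ and $\bar a'\in A'$ satisfy $A'\models\psi(\bar a')$, there exist an extension $A'\subseteq B'\in K$ and $b'\in B'$ with $B'\models\varphi(\bar a',b')$. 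Since $\psi$ is quantifier free, $\langle\bar a,b\rangle\models\psi(\bar a)$ forces $\langle\bar a\rangle\models\psi(\bar a)$, and so $\mathcal{M}_{2}\models\psi(\bar a)$, because $\bar a$ has the same quantifier free type in $A$, in $\mathcal{M}_{1}$ and in $\mathcal{M}_{2}$.

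It remains to verify $T\models\forall\bar v\,(\psi\to\exists w\,\varphi)$, which, as $T=\mathrm{Th}(\mathcal{M})$, I check in $\mathcal{M}$ itself; this is where \cref{construcMP} enters and is the step I expect to be the most delicate, since one must route everything through the structural ($K$-$\aleph_{1}$) saturation of $\mathcal{M}$ and avoid any circular appeal to first order saturation, which is not yet available at this stage. Given $\bar c\in\mathcal{M}$ with $\mathcal{M}\models\psi(\bar c)$, set $C=\langle\bar c\rangle\subseteq\mathcal{M}$; then $C\in K$ and $C\models\psi(\bar c)$. The extension property of $\psi$ supplies $C\subseteq B'\in K$ and $b'\in B'$ with $B'\models\varphi(\bar c,b')$. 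Now $C$ and $B'$ are countable members of $K$, and the inclusions $C\hookrightarrow\mathcal{M}$ and $C\hookrightarrow B'$ are embeddings, so the $K$-$\aleph_{1}$-saturation of $\mathcal{M}$ furnishes an embedding $g\colon B'\to\mathcal{M}$ with $g|_{C}=\mathrm{id}_{C}$. As $g$ is an embedding it preserves the quantifier free formula $\varphi$, whence $\mathcal{M}\models\varphi(\bar c,g(b'))$ and therefore $\mathcal{M}\models\exists w\,\varphi(\bar c,w)$. This proves the universal statement in $\mathcal{M}$, hence in $T$. Combining with the previous paragraph, for arbitrary $\mathcal{M}_{2}\models T$ we obtain $\mathcal{M}_{2}\models\psi(\bar a)\Rightarrow\mathcal{M}_{2}\models\exists w\,\varphi(\bar a,w)$, which is exactly the required instance of the criterion and establishes quantifier elimination.
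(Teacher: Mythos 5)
Your proof is correct, but it takes a genuinely different route from the paper's. The paper disposes of the lemma in one line: any $\aleph_{1}$-saturated model of $T$ is $K$-$\aleph_{1}$-saturated, and Condition $2$ then gives the back-and-forth property for partial isomorphisms between finitely generated substructures, which is the classical back-and-forth criterion for quantifier elimination. You instead use the one-quantifier existential-transfer criterion over a common substructure, and you verify the key implication $T\models\forall\bar v\,(\psi\to\exists w\,\varphi)$ directly in the canonical generic model $\mathcal{M}$ of \cref{construcMP}, invoking only its structural ($K$-$\aleph_{1}$-) saturation rather than any first order saturation. What your route buys: it isolates explicitly the extension axioms that $T$ proves (the sentences $\forall\bar v\,(\psi\to\exists w\,\varphi)$ obtained from Condition $2$), and it sidesteps the paper's unproved intermediate claim that first order $\aleph_{1}$-saturated models of $T$ are $K$-$\aleph_{1}$-saturated --- a claim whose verification would essentially reproduce your computation anyway, since one must first know that the extension statements belong to $T$ before one can realize them in an arbitrary saturated model. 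Your preliminary step, that the universal consequences of Condition $1$ force every finite or countable substructure of any model of $T$ into $K$, is likewise implicitly needed by the paper and is worth having on record. What the paper's route buys in exchange is brevity, and the back-and-forth system it sets up shows directly that complete types over finitely generated substructures are determined by quantifier-free types, which is the form in which quantifier elimination is actually used later (e.g.\ for stationarity of $\forkindep^{\Gamma}$ in \cref{gammapropP}).
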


\begin{proof} This is clear from the Condition $2\ $: If we consider any $\aleph_{1}$-saturated model it is $K$-$\aleph_{1}$-saturated, so partial isomorphisms between finitely generated structures have the back and forth property.
\end{proof}

\begin{remark}\label{clotalg} A consequence of the Free Amalgamation property is that $dcl(A)=acl(A)=\langle A \rangle$ for every set $A\subseteq \mathbb{M}$. In fact if some element $a$ is not in the countable structure $A = \langle A \rangle$ then we can form $\langle A,a \rangle \oplus_{A} \langle A,a\rangle$ and get two copies of $a$ over $A$, repeating the process we get that $\qftp(a/A)$ is not algebraic.
\end{remark}

\begin{lemma}\label{amalgamalg} The relation of algebraic independence $\forkindep^{a}$ on subsets of $\mathbb{M}$ satisfies the independence theorem: If $E,A_{i},B_{i}\subseteq\mathbb{M}$ for $i\in \lbrace 0,1\rbrace$, $A_{0}\equiv_{E}A_{1}$, $A_{i}\forkindep^{a}_{E}B_{i}$ for every $i\in \lbrace 0,1\rbrace$ and $B_{0}\forkindep^{a}_{E}B_{1}$ there is an $A$ such that $A\equiv_{B_{i}}A_{i}$ for $i\in \lbrace 0,1\rbrace$ and $A\forkindep^{a}_{E}B_{0}B_{1}$.\end{lemma}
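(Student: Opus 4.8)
The plan is to reduce the statement to a purely structural amalgamation problem that Condition $5$ solves, and then transport the resulting structure back into $\mathbb{M}$ by saturation.

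First I would invoke \cref{qeparam} and \cref{clotalg}: since $T$ eliminates quantifiers and $acl(X)=\langle X\rangle$, a type over a set is determined by the $L$-isomorphism type, over that set, of the substructure it generates. So I may replace every parameter set by the substructure it generates and assume $E,A_{i},B_{i}$ are algebraically closed substructures of $\mathbb{M}$ containing $E$. Then $A_{0}\equiv_{E}A_{1}$ yields an $L$-isomorphism identifying $\langle A_{0}E\rangle$ with $\langle A_{1}E\rangle$ over $E$; call the common abstract structure $A$. The hypothesis $A_{i}\forkindep^{a}_{E}B_{i}$ says that inside $D_{i}:=\langle A_{i}B_{i}E\rangle$ the copies of $A$ and $B_{i}$ meet exactly in $E$, and $B_{0}\forkindep^{a}_{E}B_{1}$ says that inside $B:=\langle B_{0}B_{1}E\rangle$ the copies of $B_{0},B_{1}$ meet exactly in $E$.

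These data — $E$ at the base, $A$, $B_{0}$, $B_{1}$, $D_{0}$, $D_{1}$ and $B$, with their inclusions — are exactly the left-hand diagram of \emph{Algebraically Independent $3$-Amalgamation}, and the two disjointness hypotheses are precisely the two intersection conditions required there. Applying Condition $5$ gives a structure $D$ in which $\varphi_{D_{0}}(D_{0})\cap\varphi_{D_{1}}(D_{1})$ is the common copy of $A$ and $\varphi_{D_{i}}(D_{i})\cap\varphi_{B}(B)$ is the copy of $B_{i}$. Writing $A'$ for the copy of $A$ in $D$, I would then verify the two facts I need. Since $A'\subseteq\varphi_{D_{0}}(D_{0})$, the intersection $A'\cap\varphi_{B}(B)$ lies in $\varphi_{D_{0}}(D_{0})\cap\varphi_{B}(B)$, the copy of $B_{0}$, and hence equals $A'\cap B_{0}=E$, giving $A'\forkindep^{a}_{E}B$ inside $D$. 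And because $E\subseteq A$ forces $D_{i}=\langle A B_{i}\rangle$, we get $\langle A'B_{i}\rangle=\varphi_{D_{i}}(D_{i})\cong D_{i}=\langle A_{i}B_{i}E\rangle$ via an isomorphism fixing $EB_{i}$ pointwise and carrying $A'$ to $A_{i}$, so $\qftp(A'/EB_{i})=\qftp(A_{i}/EB_{i})$.

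Finally I would embed $\langle A'B\rangle$ into $\mathbb{M}$ fixing $B=\langle B_{0}B_{1}E\rangle$ pointwise. Every finitely generated substructure of $\langle A'B\rangle$ lies in $K$, so the quantifier-free type describing this extension of $B$ is finitely satisfiable in $\mathbb{M}$: each finite piece is an extension of a finitely generated, hence countable, substructure of $B$, which embeds over it by the $\aleph_{1}$-saturation property of \cref{construcMP}; saturation of $\mathbb{M}$ then realises the whole type. Taking $A$ to be the image of $A'$, \cref{qeparam} turns the two facts above into $A\equiv_{EB_{i}}A_{i}$ (in particular $A\equiv_{B_{i}}A_{i}$) and $\langle AE\rangle\cap\langle B_{0}B_{1}E\rangle=E$, i.e. $A\forkindep^{a}_{E}B_{0}B_{1}$, as required. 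The hard part will be twofold: the diagram bookkeeping needed to check that the hypotheses really instantiate the left side of Condition $5$ and that its output yields the stated intersections; and the passage from the countable setting in which $(H)$ is phrased to arbitrary small parameter sets, which for uncountable $E,A_{i},B_{i}$ requires producing $D$ as a directed union of countable amalgams, using that $K$ is closed under countable unions (Condition $1$).
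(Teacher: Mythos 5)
Your core idea is exactly the paper's: the hypotheses instantiate the left-hand diagram of Condition $5$ (Algebraically Independent $3$-Amalgamation) with $D_{i}=\langle A_{i}B_{i}E\rangle$ and $B=\langle B_{0}B_{1}E\rangle$, the output intersections translate back into $A\equiv_{B_{i}}A_{i}$ and $A\forkindep^{a}_{E}B_{0}B_{1}$ via \cref{qeparam} and \cref{clotalg}, and the resulting configuration is transported into $\mathbb{M}$ by saturation. For countable $E,A_{i},B_{i}$ your argument is correct, and your intersection computations (e.g.\ $A'\cap\varphi_{B}(B)\subseteq\varphi_{D_{0}}(D_{0})\cap\varphi_{B}(B)$, hence $=\varphi_{D_{0}}(A\cap B_{0})=E$) are exactly the verifications the paper leaves implicit.

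There is, however, a genuine gap in your treatment of uncountable parameter sets. You propose to produce the global amalgam $D$ ``as a directed union of countable amalgams.'' But Condition $5$, unlike Free Amalgamation, asserts only the \emph{existence} of some completing structure for each diagram; it carries no universal property and hence no canonicity. Given countable pieces $E'\subseteq E''$, $A'\subseteq A''$, etc., nothing forces an amalgam of the smaller configuration to embed compatibly into an amalgam of the larger one — the previously built amalgam is not part of the input data of Condition $5$, so you cannot demand the new one extend it. This is precisely why the paper's own construction of a directed system in \cref{existenceamalg} leans on the uniqueness clause of Free Amalgamation, which is unavailable here. The paper avoids the problem entirely by never building a global $D$: it writes down the single type $q_{0}(\overline{x}_{A}\overline{y}_{0})\cup q_{1}(\overline{x}_{A}\overline{y}_{1})\cup\lbrace t(\overline{x}_{\overline{a}})\centernot\in\langle B_{0}B_{1}\rangle : t(\overline{a})\centernot\in E\rbrace$ and checks consistency by compactness, which requires only that each \emph{finite} fragment be amalgamated — separately, with no coherence among the finitely generated amalgams — by Condition $5$. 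Your own final step already contains this idea (finite satisfiability of the quantifier-free type over $B$), but as written it presupposes that $\langle A'B\rangle$ exists as a structure; the repair is to make the type itself, rather than a structure, the object you realize, exactly as the paper does.
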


\begin{proof} Consider $E,A_{i},B_{i}$ for $i=0,1$ as in the statement, $\overline{x}_{A}$ a tuple of variables indexed by $A_{0}$ (and so also by $A_{1}$ through the isomorphism $\varphi: A_{0} \rightarrow A_{1}$ induced by $A_{0}\equiv_{E}A_{1}$). We write $x_{a}$ the variable in $\overline{x}_{A}$ associated to $a\in A_{0}$ and $\varphi(a)\in A_{1}$. We extend $\overline{x}_{A}$ into $\overline{x}_{A}\overline{y}_{0}$, a tuple of variable indexed on $\langle A_{0}B_{0}\rangle$ and into $\overline{x}_{A}\overline{y}_{1}$, a tuple of variable indexed on $\langle AB_{1}\rangle$ disjoint with $\overline{y}_{0}$.

\vspace{10pt}
Let $q_{i}(\overline{x}_{A}\overline{y}_{i}):=\qftp(\langle A_{i}B_{i}\rangle /B_{i})$ for $i=0,1$. We show that:

\begin{center}
$q_{0}(\overline{x}_{A}\overline{y}_{0})\cup q_{1}(\overline{x}_{A}\overline{y}_{1})\cup \lbrace t(\overline{x}_{\overline{a}})\centernot\in \langle B_{0}B_{1}\rangle$ : $t$ terms and $\overline{a}\in A_{0}$ tuples such that $t(\overline{a})\centernot\in E \rbrace$
\end{center}

is a consistent type. By compactness is it enough to consider finitely generated structures $E,A_{i},B_{i}$, and in that case we get consistency by just applying Condition $5$.
\end{proof}

\begin{definition} We define the \emph{$\Gamma$-independence} relation on small subsets of $\mathbb{M}$, the monster model of $T$. For $E\subseteq A,B$ say that $A\forkindep^{\Gamma}_{E}B$ if the structure $\langle AB\rangle$ and the inclusions $i_{A} :A  \rightarrow \langle AB\rangle $, $i_{B} : B \rightarrow \langle AB\rangle $ satisfy the universal property that every common extension of $A$ and $B$ in $\mathbb{M}$ factorizes uniquely through $\langle AB \rangle$. In the case where $E\subseteq A,B$ are countable $A\forkindep^{\Gamma}_{E}B$ if and only if $A\oplus_{E}B$ is isomorphic to $\langle AB \rangle$ via the canonical map of Condition $3$.
\end{definition}

\begin{lemma}\label{existenceamalg} Full existence: For every $E\subseteq A,B\subseteq \mathbb{M}$ there is $A'\equiv_{E}A$ such that $A'\forkindep^{\Gamma}_{E}B$.\end{lemma}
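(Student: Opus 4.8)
The plan is to realize the free amalgam of $A$ and $B$ over $E$ inside $\mathbb{M}$ by building a small type over $B$ that says ``$A'$ is an $E$-copy of $A$ lying freely over $B$'', checking its finite satisfiability with Free Amalgamation applied to finitely generated pieces, and then invoking saturation of the monster. First, by quantifier elimination (\cref{qeparam}), $A'\equiv_{E}A$ is equivalent to $\qftp(A'/E)=\qftp(A/E)$, i.e. to the map fixing $E$ and sending $A$ to $A'$ being an $L$-embedding; and for $E$-substructures the relation $A'\forkindep^{\Gamma}_{E}B$ amounts to the canonical morphism $A'\oplus_{E}B\to \langle A'B\rangle$ being an isomorphism. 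So it suffices to produce $A'\subseteq\mathbb{M}$ together with an isomorphism $A'\oplus_{E}B\cong\langle A'B\rangle$ fixing $B$ and restricting to an $E$-isomorphism $A\cong A'$.

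Let $\overline{x}=(x_{a})_{a\in A}$ and let $\Sigma(\overline{x})$ be the type over $B$ consisting of all quantifier free formulas $\varphi(\overline{x}_{\overline{a}},\overline{b})$ with $\overline{a}\in A$, $\overline{b}\in B$ such that $\varphi$ holds of $(i_{A}(\overline{a}),i_{B}(\overline{b}))$ in the free amalgam; equivalently, $\Sigma$ asserts that $(\overline{x},B)$ realizes the quantifier free type that the images of $A$ and $B$ realize in $A\oplus_{E}B$. Any realization $A'$ of $\Sigma$ yields, by \cref{qeparam}, an $L$-embedding $A\oplus_{E}B\to\mathbb{M}$ that fixes $B$ and whose image is exactly $\langle A'B\rangle$; using $acl=\langle\,\cdot\,\rangle$ from \cref{clotalg} this gives both $A'\equiv_{E}A$ and the required isomorphism $A'\oplus_{E}B\cong\langle A'B\rangle$, hence $A'\forkindep^{\Gamma}_{E}B$. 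Since $\Sigma$ has size at most $\vert A\vert+\vert B\vert+\vert T\vert$, it is small and is realized in $\mathbb{M}$ by saturation, so everything reduces to consistency of $\Sigma$.

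For finite satisfiability, a finite fragment of $\Sigma$ mentions only finitely many tuples $\overline{a}_{1},\dots,\overline{a}_{n}\in A$ and $\overline{b}_{1},\dots,\overline{b}_{m}\in B$. Set $A_{0}=\langle E\overline{a}_{1}\cdots\overline{a}_{n}\rangle$ and $B_{0}=\langle E\overline{b}_{1}\cdots\overline{b}_{m}\rangle$; these are finitely generated, hence countable members of $K$ by hereditarity. Form $A_{0}\oplus_{E}B_{0}\in K$ using Free Amalgamation (Condition $3$). Since $B_{0}\hookrightarrow A_{0}\oplus_{E}B_{0}$ is an embedding in $K$ and $B_{0}\subseteq\mathbb{M}$, the $K$-$\aleph_{1}$-saturation of $\mathbb{M}$ supplies an embedding $g\colon A_{0}\oplus_{E}B_{0}\to\mathbb{M}$ with $g\restriction{}{B_{0}}=\mathrm{id}$, exactly as in the embedding argument of \cref{construcMP}; then $g(i_{A}(A_{0}))$ realizes the chosen fragment. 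By compactness $\Sigma$ is consistent.

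The \textbf{main obstacle} is the reduction from the possibly uncountable $A,B$ to finitely generated pieces, which is what makes $\Sigma$ well defined and guarantees that satisfying all local freeness conditions forces global $\Gamma$-independence. This rests on the categorical fact that the free amalgam, being a pushout, commutes with the directed unions $A=\bigcup A_{0}$ and $B=\bigcup B_{0}$: one has $A\oplus_{E}B=\varinjlim\,(A_{0}\oplus_{E}B_{0})$, and the substructure of $A\oplus_{E}B$ generated by $i_{A}(A_{0})$ and $i_{B}(B_{0})$ is canonically $A_{0}\oplus_{E}B_{0}$. I would verify this commutation directly from the universal property in Condition $3$ (checking that the directed colimit of the $A_{0}\oplus_{E}B_{0}$ satisfies the same universal property as $A\oplus_{E}B$), which also ensures that the quantifier free type extracted from $A\oplus_{E}B$ restricts correctly to each $A_{0}\oplus_{E}B_{0}$ used in the finite step.
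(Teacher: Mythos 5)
Your overall strategy is the paper's own: the paper likewise realizes a directed colimit of free amalgams of small pieces inside $\mathbb{M}$ (it forms $D$ as the direct limit of the $A'\oplus_{E'}B'$ and observes that $\Diag(D/B)$ is consistent with $T$ by compactness, which is exactly your type $\Sigma$ plus saturation), and then verifies the universal property of $\forkindep^{\Gamma}$ by the same directedness-and-uniqueness argument you sketch in your final paragraph. However, your write-up contains one genuine gap: you keep the base of every amalgam equal to the full set $E$. The class $K$ consists only of finite and countable structures, Condition $3$ (Free Amalgamation) is only available inside $K$, and $K$-$\aleph_{1}$-saturation only extends embeddings of countable structures. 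Since $E$ is an arbitrary small subset of $\mathbb{M}$, it may well be uncountable; then $A_{0}=\langle E\overline{a}_{1}\cdots\overline{a}_{n}\rangle$ is neither finitely generated nor countable (it is only finitely generated \emph{over} $E$), so $A_{0}\oplus_{E}B_{0}$ is not defined in $K$, your finite-satisfiability step has no amalgam to embed, and the colimit formula $A\oplus_{E}B=\varinjlim\,(A_{0}\oplus_{E}B_{0})$ in your last paragraph does not parse. Uncountable bases genuinely occur in this paper; for instance, the local character argument in \cref{nsop1ouais} runs over uncountable regular cardinals $\kappa$.

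The repair is precisely the paper's indexing: let the base vary as well, taking the directed system of $A'\oplus_{E'}B'$ over all countable substructures $E'\subseteq E$, $A'\subseteq A$, $B'\subseteq B$ with $E'\subseteq A',B'$, the transition maps being the canonical morphisms supplied by the universal property of Condition $3$. With that change your argument goes through: each finite fragment of $\Sigma$ lives in some countable $A'\oplus_{E'}B'\in K$, which embeds into $\mathbb{M}$ over $B'$ by $\aleph_{1}$-saturation, and the universal property of $\langle A'B\rangle$ against an arbitrary small common extension $D'\subseteq\mathbb{M}$ (which is what the definition of $\forkindep^{\Gamma}$ demands for non-countable sets, not the isomorphism $A'\oplus_{E}B\cong\langle A'B\rangle$, which the paper only asserts in the countable case) follows because the restrictions of $\varphi_{A},\varphi_{B}$ factor uniquely through each countable amalgam and these factorizations form a directed system whose limit is the required unique map.
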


\begin{proof}
We want to construct an $L$-structure $D$ extending $B$ and an $E$-embedding $i_{A} : A\rightarrow D$ such that for $A':=i_{A}(A)$, $D=\langle A'B\rangle$ and $A'\forkindep^{\Gamma}_{E}B$. For countable substructures $E'\subseteq E$, $A'\subseteq A$, $B'\subseteq B$ we consider $A'\oplus_{E'}B'\in K$. The structure $D$ that we build is the inductive limit of these structures. We begin by showing that they indeed form an inductive system.

\begin{figure}[hbtp]
    \centering

\[\begin{tikzcd}
	&& {A'\oplus_{E'}B'} \\
	{A'} && {A''\oplus_{E''}B''} && {B'} \\
	{A''} && {E'} && {B''} \\
	&& {E''}
	\arrow[from=4-3, to=3-1]
	\arrow[from=4-3, to=3-5]
	\arrow[from=3-1, to=2-3]
	\arrow[from=3-5, to=2-3]
	\arrow[from=4-3, to=3-3]
	\arrow["{\exists!\varphi}"', dashed, from=2-3, to=1-3]
	\arrow[from=3-3, to=2-1]
	\arrow[from=3-1, to=2-1]
	\arrow[from=2-1, to=1-3]
	\arrow[from=3-5, to=2-5]
	\arrow[from=2-5, to=1-3]
	\arrow[from=3-3, to=2-5]
\end{tikzcd}\]

    \caption{The inductive system of $A'\oplus_{E'}B'$}

\end{figure}

Let $E''\subseteq E'\subseteq E$, $A''\subseteq A'\subseteq A$, $B''\subseteq B'\subseteq B$ be countable substructure. Then there is a canonical map from $A''\oplus_{E''}B''$ to $A'\oplus_{E'}B'$ where $E''\subseteq E'$, $A''\subseteq A'$ and $B''\subseteq B'$. We describe it in the above diagram. This map assures us that the system of the $A'\oplus_{E'}B'$ we are considering is directed and that we can define the $L$-structure $D$ as its direct limit. Also $\Diag(D/B)$ is consistent with $T$ because any formula that appears in it is satisfied in some of the $A'\oplus_{E'}B'\in K$.

\vspace{10pt}

Now we show that $D$ satisfies the right property: Let $D'\subseteq \mathbb{M}$, and $\varphi_{A} : A\rightarrow D'$, $\varphi_{B} : B \rightarrow D'$ form a commutative square over $E$. For every $E''\subseteq E'\subseteq E$, $A''\subseteq A'\subseteq A$, $B''\subseteq B'\subseteq B$ countable substructures the restricted morphism $\varphi_{E\vert E''}\subseteq \varphi_{E\vert E'}$, $\varphi_{A\vert A''}\subseteq \varphi_{A\vert A'}$, $\varphi_{B\vert B''}\subseteq \varphi_{B\vert B'}$ factories uniquely via $\varphi_{A''\oplus_{E''}B''}$ and $ \varphi_{A'\oplus_{E'}B'}$ respectively trough $A''\oplus_{E''}B''$ and $A'\oplus_{E'}B'$ respectively. By uniqueness if $\varphi$ is the canonical map from $A''\oplus_{E''}B''$ to $A'\oplus_{E'}B'$ then $ \varphi_{A'\oplus_{E'}B'} \circ\varphi = \varphi_{A''\oplus_{E''}B''}$, so the system of the $\varphi_{A'\oplus_{E'}B'}$ is also directed and we can consider its limit $\varphi_{A\oplus_{E}B}$, which is the map we want.\end{proof}

The difference with the previous free amalgam notion is that it was defined for the elements of $K$ only, which corresponds to the countable substructures of $\mathbb{M}$, whereas this notion is defined for small substructures. Also since our class $K$ is axiomatisable every countable substructure of $\mathbb{M}$ is inside of $K$ so there is no problem in considering the structures $A'\oplus_{E'}B'$ for any countable substructures $E',A',B'$ of $\mathbb{M}$.

\begin{prop}\label{gammapropP} The relation $A\forkindep^{\Gamma}_{E}B$ defined for structures $E\subseteq A,B$ is invariant, symmetric, satisfies existence, extension, monotonicity, finite character, stationarity, base monotonicity and transitivity.
\end{prop}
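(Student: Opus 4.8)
The plan is to read $\forkindep^{\Gamma}$ entirely through its universal property: by Condition $3$ there is a canonical surjective morphism $A\oplus_{E}B\to\langle AB\rangle$, and $A\forkindep^{\Gamma}_{E}B$ says exactly that this map is an isomorphism (equivalently an embedding), i.e.\ that $\langle AB\rangle$ is the pushout of $A\leftarrow E\rightarrow B$ in $K$. (For small, possibly uncountable, $A,B,E$ this is the direct-limit amalgam of \cref{existenceamalg}, and its equivalence with the extension-based formulation in the definition holds by saturation of $\mathbb{M}$; see the last paragraph.) All nine properties will follow formally from this, using three inputs: \cref{existenceamalg}; quantifier elimination (\cref{qeparam}) together with $acl=dcl=\langle\cdot\rangle$ (\cref{clotalg}), under which $X\equiv_{F}Y$ is the same as an $L$-isomorphism $\langle XF\rangle\to\langle YF\rangle$ fixing $\langle F\rangle$ and matching the tuples; and Condition $3$ itself (each leg of a free amalgam is an embedding and $i_{A}(A)\cap i_{B}(B)=i(E)$).

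I would first dispatch the soft properties. Invariance is immediate, since an automorphism of $\mathbb{M}$ is an $L$-isomorphism carrying the pushout square of $(A,B,E)$ to that of $(\sigma A,\sigma B,\sigma E)$. Symmetry is immediate from $\langle AB\rangle=\langle BA\rangle$ and symmetry of the pushout in its two legs. Existence is precisely \cref{existenceamalg}. Finite character follows because $\oplus_{E}$ and $\langle\cdot\rangle$ commute with directed unions: the canonical map $A\oplus_{E}B\to\langle AB\rangle$ is the direct limit of the maps $\langle aE\rangle\oplus_{E}\langle bE\rangle\to\langle abE\rangle$ over finite $a\subseteq A$, $b\subseteq B$, hence is an isomorphism iff each of these is, giving $A\forkindep^{\Gamma}_{E}B$ iff $a\forkindep^{\Gamma}_{E}b$ for all such finite tuples.

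The structural core is the associativity (pasting) of free amalgamation: for $E\subseteq F\subseteq B$ one has $A\oplus_{E}B\cong(A\oplus_{E}F)\oplus_{F}B$, both computing the pushout of the composite span $A\leftarrow E\rightarrow F\rightarrow B$, and symmetrically on the left. For monotonicity, with $A_{0}\subseteq A$ and $B_{0}\subseteq B$ over $E$, two applications of pasting produce embeddings $A_{0}\oplus_{E}B_{0}\hookrightarrow A\oplus_{E}B_{0}\hookrightarrow A\oplus_{E}B$, each map being a leg of a free amalgam and hence an embedding by Condition $3$; composing with $A\oplus_{E}B\cong\langle AB\rangle$ identifies $A_{0}\oplus_{E}B_{0}$ with the substructure $\langle A_{0}B_{0}\rangle$, i.e.\ $A_{0}\forkindep^{\Gamma}_{E}B_{0}$. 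For base monotonicity, if $A\forkindep^{\Gamma}_{E}B$ and $E\subseteq F\subseteq\langle B\rangle$, then monotonicity gives $A\oplus_{E}F\cong\langle AF\rangle$, and pasting rewrites $\langle AB\rangle\cong A\oplus_{E}B\cong\langle AF\rangle\oplus_{F}B$, which is exactly $\langle AF\rangle\forkindep^{\Gamma}_{F}B$. Transitivity is the same computation from the other side: from $A\forkindep^{\Gamma}_{E}B$ and $A\forkindep^{\Gamma}_{EB}C$ we get $\langle ABC\rangle\cong\langle AB\rangle\oplus_{\langle B\rangle}\langle BC\rangle\cong(A\oplus_{E}B)\oplus_{\langle B\rangle}\langle BC\rangle\cong A\oplus_{E}\langle BC\rangle$, i.e.\ $A\forkindep^{\Gamma}_{E}BC$.

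Stationarity is where quantifier elimination enters. Given $A\equiv_{E}A'$ with $A\forkindep^{\Gamma}_{E}B$ and $A'\forkindep^{\Gamma}_{E}B$, \cref{qeparam} and \cref{clotalg} yield an $L$-isomorphism $f:\langle AE\rangle\to\langle A'E\rangle$ fixing $\langle E\rangle$ and sending $A$ to $A'$; pairing $f$ with $\mathrm{id}_{B}$ (they agree on $\langle E\rangle$) and invoking functoriality of the pushout gives an isomorphism $\langle AB\rangle\cong\langle AE\rangle\oplus_{\langle E\rangle}B\to\langle A'E\rangle\oplus_{\langle E\rangle}B\cong\langle A'B\rangle$ that fixes $B$ and sends $A$ to $A'$, so $A\equiv_{EB}A'$. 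Finally extension combines the pieces: given $A\forkindep^{\Gamma}_{E}B$ and $B\subseteq C$, \cref{existenceamalg} produces $A''\equiv_{E}A$ with $A''\forkindep^{\Gamma}_{E}C$, whence $A''\forkindep^{\Gamma}_{E}B$ by monotonicity, and stationarity upgrades $A''\equiv_{E}A$ to $A''\equiv_{EB}A$, so $A''$ is the required witness. The step I expect to cost the most care is not any single property but the bookkeeping that lets all of the above run for arbitrary small $A,B,E$: one must verify that associativity, functoriality, and ``each leg embeds'' survive the direct-limit construction of $\oplus_{E}$, and that the canonical-map characterization agrees there with the extension-based universal property of the definition. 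This is routine, since directed colimits of embeddings are embeddings and colimits commute with colimits, but it is the part that must be stated precisely, and finite character is exactly what reduces it to the countable structures of $K$.
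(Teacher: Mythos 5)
Your proposal is correct and takes essentially the same route as the paper: both read $\forkindep^{\Gamma}$ through the pushout universal property, get stationarity from quantifier elimination via uniqueness of the isomorphism type of $A\oplus_{E}B$ over $B$, reduce finite character to finitely generated pieces, and derive transitivity and base monotonicity from pasting of adjacent pushouts (the paper states this pasting observation verbatim at the end of those two arguments). The only divergences are minor reorganizations — you prove monotonicity by associativity of $\oplus$ and obtain extension from full existence plus stationarity, where the paper runs an explicit morphism chase for monotonicity and routes extension through transitivity — and the direct-limit bookkeeping you flag for uncountable $E\subseteq A,B$ is exactly what \cref{existenceamalg} supplies.
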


\begin{proof}
By existence here we mean that $A\forkindep^{\Gamma}_{E}E$ for every extension $E\subseteq A$. The properties of symmetry, existence and the fact that it implies $\forkindep^{a}$ are immediate. Stationarity is a consequence of quantifier elimination: Up to isomorphism over $B$ there is a unique structure $D=A\oplus_{E}B$ that satisfies $A\forkindep^{\Gamma}_{E}B$, which means a unique quantifier free type over $B$.

\vspace{10pt}
For monotonicity assume that $A\forkindep^{\Gamma}_{E}B$ and consider $E\subseteq C \subseteq B$ a substructure (this is enough by symmetry). We consider the free amalgam $D$ of $A$ and $C$ and the associated embeddings $i_{A},i_{C}$. Now let us consider the free amalgam $D'$ of $D$ and $B$ over $C$ and its associated embeddings $j_{D}$ and $j_{B}$. There is a unique morphism $\varphi_{D} : D \rightarrow \langle AB\rangle$ such that $\varphi_{D}\circ i_{A}=id_{A}$ and $\varphi_{D}\circ i_{C}=id_{C}$ and $\varphi_{D'}\circ j_{B} = id_{B}$. Since $\langle AB \rangle = A\oplus_{E}B$ there is a unique morphism $\psi_{D'} : D' \rightarrow \langle AB\rangle \rightarrow D'$ such that $\psi_{D'}$ restricted to $A$ and $B$ respectively is $j_{D}\circ i_{A}$ and $j_{B}$ respectively. From this we get that $\varphi_{D'}\circ \psi_{D'}=id_{D'}$ and $\psi_{D'}\circ \varphi_{D'} = id_{D}$, so $\varphi_{D'}$ is an isomorphism.

\vspace{10pt}
There is a unique morphism $\varphi_{D'} : D' \rightarrow \langle AB\rangle$ such that $\varphi_{D'}\circ j_{D} = \varphi_{D}$. So $\varphi_{D'}\circ j_{D}\circ i_{A}=id_{A}$, and we get that $D'$ is generated by the images of $A$ and $B$ and that $\varphi_{D'}$ is an isomorphism. We know that $D$ is generated by the images of $A$ and $C$, and that it is sent to $\langle AC \rangle \subseteq \langle AB\rangle$ by $j_{D}$, so $\varphi_{D'}\circ j_{D} : D \rightarrow \langle AC\rangle $ is an isomorphism by a similar argument, so $A\forkindep^{\Gamma}_{E}C$.

\begin{figure}[!ht]
\centering
\resizebox{0.8\textwidth}{!}{%
\begin{circuitikz}
\tikzstyle{every node}=[font=\large]

\draw [->, >=Stealth] (-1,8) -- (1,10)node[pos=0.5, fill=white]{$i_{C}$};
\node [font=\large] at (6.25,7.75) {A};
\node [font=\large] at (8.75,5.25) {E};
\node [font=\large] at (11.25,7.75) {C};
\node [font=\large] at (13.75,10.25) {C};
\node [font=\large] at (8.75,10.25) {$\langle AC\rangle$};
\node [font=\large] at (11.25,12.75) {$\langle AB \rangle$};
\node [font=\large] at (6.25,12.75) {D};
\draw [->, >=Stealth] (6.25,8) -- (6.25,12.5)node[pos=0.5, fill=white]{$\varphi_{A}$};
\draw [->, >=Stealth] (13.5,10.5) -- (11.5,12.5);
\draw [->, >=Stealth] (14,10.75) .. controls (14,14.5) and (10.5,15.5) .. (6.75,13.25) node[pos=0.5, fill=white]{$\varphi_{B}$};
\draw [->, >=Stealth] (11.5,8) -- (13.5,10);
\draw [->, >=Stealth] (11,8) -- (9,10);
\draw [->, >=Stealth] (8.5,5.5) -- (6.5,7.5);
\draw [->, >=Stealth] (9,5.5) -- (11,7.5);
\draw [->, >=Stealth] (9,10.5) -- (11,12.5);
\draw [->, >=Stealth] (6.5,8) -- (8.5,10);

\node [font=\large] at (3.75,7.75) {A};
\node [font=\large] at (1.25,5.25) {E};
\node [font=\large] at (-1.25,7.75) {C};
\node [font=\large] at (-3.75,10.25) {B};
\node [font=\large] at (1.25,10.25) {D};
\node [font=\large] at (-1.25,12.75) {D'};
\node [font=\large] at (3.75,12.75) {$\langle A B\rangle$};
\draw [->, >=Stealth] (3.5,8) -- (1.5,10)node[pos=0.5, fill=white]{$i_{A}$};
\draw [->, >=Stealth] (3.75,8) -- (3.75,12.5);
\draw [->, >=Stealth] (-1,12.75) -- (3,12.75)node[pos=0.5, fill=white]{$\varphi_{D'}$};
\draw [->, >=Stealth] (1,10.5) -- (-1,12.5)node[pos=0.5, fill=white]{$j_{D}$};
\draw [->, >=Stealth] (-4,10.75) .. controls (-4,14.5) and (-0.75,15.5) .. (3.25,13.25) ;
\draw [->, >=Stealth] (-1.5,8) -- (-3.5,10);
\draw [->, >=Stealth] (1,5.5) -- (-1,7.5);
\draw [->, >=Stealth] (1.5,5.5) -- (3.5,7.5);
\draw [->, >=Stealth] (1.5,10.5) -- (3.5,12.5)node[pos=0.5, fill=white]{$\varphi_{D}$};
\draw [->, >=Stealth] (-3.5,10.5) -- (-1.5,12.5)node[pos=0.5, fill=white]{$j_{B}$};
\draw [->, >=Stealth, dashed] (10.5,12.75) -- (6.5,12.75)node[pos=0.5, fill=white]{$\exists!\varphi$};
\draw [->, >=Stealth, dashed] (8.5,10.5) -- (6.5,12.5)node[pos=0.5, fill=white]{$\exists!\varphi_{\langle AC\rangle}$};
\end{circuitikz}
}%

\caption{Monotonicity (on the left) and transitivity (on the right) of $\forkindep^{\Gamma}$}
\label{lesdeuxfigures}
\end{figure}

For transitivity, given structures $E\subseteq A,C$, $C\subseteq B$ such that $A\forkindep^{\Gamma}_{E}C$ and $\langle AC \rangle \forkindep^{\Gamma}_{C}B$ we want to show that $A\forkindep^{\Gamma}_{E}B$.

Consider a structure $D$ and two embeddings $\varphi_{A},\varphi_{B}$ that coincide in $E$ from $A$ and $B$ respectively to $D$. Then the embeddings $\varphi_{A}$ and $\varphi_{B}\vert_{C}$ coincide over $E$, and by assumption there is a unique map $\varphi_{\langle AC\rangle} : \langle AC \rangle \rightarrow D$ such that $\varphi_{B}\vert_{C}= \varphi_{\langle AC\rangle}\vert_{C}$ and $\varphi_{A}=\varphi_{\langle AC\rangle}\vert_{A}$.

\vspace{10pt}
Now we consider the two maps $\varphi_{\langle AC\rangle} $ and $\varphi_{B}$ which coincide over $C$. Since $\langle AC \rangle\forkindep_{C}^{\Gamma}B$ there is a unique map $\varphi : \langle \langle AC\rangle B\rangle = \langle AB\rangle \rightarrow D$ such that $\varphi\vert_{B}=\varphi_{B}$ and $\varphi\vert_{\langle AC\rangle}=\varphi_{\langle AC\rangle}$, so in particular $\varphi\vert_{A}=\varphi_{A}$. This proof consist just in saying that the square formed by two adjacent pushouts forms a pushout.

\vspace{10pt}
For extension and stationarity, given structures $E\subseteq A,C$, $C\subseteq B$ such that $A\forkindep^{\Gamma}_{E}C$ we can form the free amalgam of $\langle AB \rangle$ and $C$ over $B$ inside of $\mathbb{M}$ thanks to \cref{existenceamalg}. This allows us to find some $A'$ such that $\langle A'C \rangle \cong \langle AC \rangle$ (since the free amalgam defines a unique isomorphism type), so by Q.\ E.\ $A' \equiv_{B}A$ and $\langle A'B \rangle\forkindep^{\Gamma}_{B}C$, so $A'\forkindep^{\Gamma}_{E}C$ by transitivity, and this gives us a unique type over $C$ for the same reason.

\vspace{10pt}
Finite character is also a direct consequence of the universal property: assume that $A\centernot\forkindep^{K}_{E}B$, so there is a structure $D$ and two embeddings $\varphi_{A}$, $\varphi_{B}$ from $A$ and $B$ respectively to $D$ that coincide in $E$ such that there is no map $\varphi : \langle AB\rangle \rightarrow D$ satisfying $\varphi_{A}=\varphi\vert_{A}$ and $\varphi_{B}=\varphi\vert_{B}$. This is equivalent to saying that the function that to an element $t(\overline{a},\overline{b})\in \langle AB\rangle$ associates $t(\varphi_{A}(\overline{a}),\varphi_{B}(\overline{b}))$ is not well defined (since this function is the only candidate) or that it is not a morphism. In either of those cases we can find some finite tuples $\overline{a}\in A$, $\overline{b}\in B$ such that the same thing is happening with the function that to an element $t(\overline{a}',\overline{b}')\in \langle E\overline{a}\overline{b}\rangle$ associates $t(\varphi_{A}(\overline{a}'),\varphi_{B}(\overline{b}'))$, so $\overline{a}\centernot\forkindep^{K}_{E}\overline{b}$.

\vspace{10pt}
For base monotonicity, given structures $E\subseteq A,C$ and $C\subseteq B$ such that $A\forkindep^{\Gamma}_{E}B$ we want to show that $\langle AC \rangle \forkindep^{\Gamma}_{C}B$. We take the same notations as for transitivity. As previously $A\forkindep^{\Gamma}_{E}C$ (by monotonicity this time). Consider a structure $D$ and two embeddings $\varphi_{\langle AC\rangle},\varphi_{B}$ from $\langle AC\rangle$ and $B$ respectively to $D$ that coincide on $C$. Then the embeddings $\varphi_{\langle AC\rangle}\vert_{A}$ and $\varphi_{B}$ coincide over $E$. By assumption there is a unique map $\varphi: \langle AB\rangle = \langle\langle AC \rangle B\rangle \rightarrow D$ such that $\varphi\vert_{A}= \varphi_{\langle AC\rangle}\vert_{A}$ and $\varphi\vert_{B}=\varphi_{B}$. Then we also have that $\varphi\vert_{C}=\varphi_{B}\vert_{C}=\varphi_{\langle AC\rangle}\vert_{C}$, so $\varphi\vert_{\langle AC\rangle}= \varphi_{\langle AC\rangle}$ since $A\forkindep^{\Gamma}_{E}C$. This proof consists in noticing that if the square formed by two adjacent squares is a pushout and the left one also is then the right one is also a pushout.\end{proof}

\begin{cor}\label{nsop1ouais} For every $E\subseteq A,B$ if $A\forkindep^{\Gamma}_{E}B$ then $A\forkindep^{f}_{E}B$, so $T$ has existence. Also $T$ is NSOP$_1$ and for every substructures $E\subseteq A,B$ we have $A\forkindep^{K}_{E}B$ if and only if $A\forkindep^{a}_{E}B$.\end{cor}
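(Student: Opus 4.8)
The plan is to route the whole statement through the Kim--Pillay criterion \cref{kimpillaykim}, applied to algebraic independence $\forkindep^{a}$: checking its properties $1$--$5$ yields NSOP$_1$-ness together with $\forkindep^{a}\Rightarrow\forkindep^{K}$, and adding $6$--$7$ upgrades this to equality. Since \cref{kimpillaykim} presupposes existence, the preliminary move is to secure existence, and this is exactly what the first assertion of the corollary provides.

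First I would prove $A\forkindep^{\Gamma}_{E}B\Rightarrow A\forkindep^{f}_{E}B$. By \cref{gammapropP} the relation $\forkindep^{\Gamma}$ is invariant and satisfies extension, and the general principle that any automorphism-invariant relation with extension is contained in non-forking (\cite{adler2005explanation}) gives the implication at once. Feeding in the existence instance $A\forkindep^{\Gamma}_{E}E$ of \cref{gammapropP} yields $A\forkindep^{f}_{E}E$ for every $E\subseteq A$, so every set is an extension basis and $T$ has existence.

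Next I would verify properties $1$--$5$ for $\forkindep^{a}$. By \cref{clotalg} we have $acl=\langle\cdot\rangle$, so $A\forkindep^{a}_{E}B$ reads $\langle AE\rangle\cap\langle BE\rangle=\langle E\rangle$; invariance, monotonicity and symmetry are immediate from this description. For strong finite character, if $a\centernot\forkindep^{a}_{E}b$ pick $c\in\langle aE\rangle\cap\langle bE\rangle\setminus\langle E\rangle$ and write $c=s(a,\bar e_{1})=t(b,\bar e_{2})$ with $\bar e_{1},\bar e_{2}\in E$; the atomic formula $s(x,\bar e_{1})=t(b,\bar e_{2})$ lies in $\tp(a/Eb)$ and forces $a'\centernot\forkindep^{a}_{E}b$ for every realization $a'$. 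Existence and extension for $\forkindep^{a}$ follow from full existence of $\forkindep^{\Gamma}$ (\cref{existenceamalg}) together with $\forkindep^{\Gamma}\Rightarrow\forkindep^{a}$, and the independence theorem is precisely \cref{amalgamalg} (its hypothesis $\equiv_{E}$ is weaker than the Lascar-equivalence $\equiv^{L}_{E}$ demanded in \cref{kimpillaykim}, so it applies a fortiori). The first half of \cref{kimpillaykim} then gives that $T$ is NSOP$_1$ and that $\forkindep^{a}$ strengthens $\forkindep^{K}$.

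Finally I would check transitivity and local character to obtain equality. Transitivity of $acl$-independence is the routine diagram chase on $acl(AC)\cap acl(BDC)$. Local character is the only genuinely computational point: after replacing each $A_{i}$ by $\langle A_{i}\rangle$ (harmless, as $\forkindep^{a}$ sees only algebraic closures), set $B_{\alpha}:=\langle dA_{\alpha}\rangle\cap A_{\kappa}$; because generation is finitary this is an increasing continuous chain, each $B_{\alpha}$ has size $<\kappa$ and is therefore absorbed into some $A_{h(\alpha)}$, and a closure-point argument using the regularity of $\kappa$ produces $\alpha<\kappa$ with $B_{\alpha}=A_{\alpha}$, i.e.\ $d\forkindep^{a}_{A_{\alpha}}A_{\kappa}$. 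The second half of \cref{kimpillaykim} then yields $\forkindep^{a}=\forkindep^{K}$, and restricting to substructures $E\subseteq A,B$ gives the stated equivalence. The conceptual difficulty is already absorbed into \cref{amalgamalg}; among the axioms checked here, the local character counting argument is the only step that is not essentially formal.
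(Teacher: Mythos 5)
Your proof of the second half (NSOP$_1$ and $\forkindep^{K}=\forkindep^{a}$) is correct and is essentially the paper's route: the paper likewise applies \cref{kimpillaykim} to $\forkindep^{a}$, citing \cite[Lemma 2.7]{kruckman2018generic} for the basic axioms where you check them by hand (your strong finite character witness $s(x,\bar e_{1})=t(b,\bar e_{2})$ is sound, since the element $t(b,\bar e_{2})\notin\langle E\rangle$ does not depend on $a'$), invokes \cref{amalgamalg} for the independence theorem, and proves local character by the same absorption argument --- the paper iterates $\omega$ times from a fixed $i_{0}$ and takes the supremum of the resulting $i_{j}$, which is precisely your closure point; regularity of $\kappa$ is used identically. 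Your derivation of extension for $\forkindep^{a}$ from \cref{existenceamalg} is a legitimate alternative to the citation, though it deserves the short computation: taking $A'\equiv_{EB}A$ with $A'\forkindep^{\Gamma}_{\langle EB\rangle}B'$, one gets $\langle A'E\rangle\cap\langle B'E\rangle\subseteq\langle A'EB\rangle\cap\langle B'EB\rangle=\langle EB\rangle$, and then $\langle A'E\rangle\cap\langle EB\rangle=\langle E\rangle$ since $A'\forkindep^{a}_{E}B$.

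However, your first step contains a genuine gap, and it is load-bearing: everything else presupposes existence. The ``general principle'' you invoke --- that any automorphism-invariant relation satisfying extension is contained in $\forkindep^{f}$ --- is false. The trivial relation in which $A\forkindep_{E}B$ always holds is invariant and satisfies extension, yet it is contained in non-forking only in theories where no type forks over any set. What is true, and what the paper actually uses, is that an invariant relation satisfying extension, transitivity \emph{and stationarity} implies $\forkindep^{d}$, and that extension then upgrades non-dividing to non-forking. Concretely: given $A\forkindep^{\Gamma}_{E}B$ and an $E$-indiscernible sequence $(B_{i})_{i<\omega}$ with $B_{0}=B$, extension yields $A'\equiv_{EB_{0}}A$ with $A'\forkindep^{\Gamma}_{B_{0}}B_{<\omega}$; invariance and transitivity give $A'\forkindep^{\Gamma}_{E}B_{<\omega}$; and then stationarity --- the property your argument never invokes --- forces $A'B_{i}\equiv_{E}A'B_{0}\equiv_{E}AB_{0}$ for every $i$, so $\tp(A/EB)$ does not divide over $E$; since $\forkindep^{\Gamma}$ satisfies extension this gives $A\forkindep^{f}_{E}B$, and taking $B=E$ gives existence. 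Without this (or some substitute for it) your existence claim, and hence the applicability of \cref{kimpillaykim} in the remainder of your proof, is unsupported; with it inserted, the rest of your argument goes through as written.
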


\begin{proof} We show that $A\forkindep^{\Gamma}_{E}B$ implies $A\forkindep^{d}_{E}B$. Assume that $A\forkindep^{\Gamma}_{E}B$ and let $(B_{i})_{i<\omega}$ be an $E$-indiscernible sequence with $B_{0}=B$. By extension we can find $A'\equiv_{B_{0}}A$ such that $A'\forkindep^{\Gamma}_{B_{0}}B_{<\omega}$. By transitivity and invariance $A'\forkindep^{\Gamma}_{E}B_{<\omega}$. By indiscernibility $B_{i}\equiv_{E}B_{0}$ for every $i<\omega$. Since $B_{0}\forkindep^{\Gamma}_{E}A'$ and $B_{i}\forkindep^{\Gamma}_{E}A'$ it follows from stationarity that $A'B_{i}\equiv_{E}A'B_{0}\equiv_{E}AB_{0}$ for every $i<\omega$. So $A\forkindep^{d}_{E}B$. Since $\forkindep^{\Gamma}$ satisfies extension we get that if $A\forkindep^{\Gamma}_{E}B$ then $A\forkindep^{f}_{E}B$, and existence for $\forkindep^{f}$ follows from existence for $\forkindep^{\Gamma}$.
\vspace{10pt}

Now that we know that $T$ has existence we can consider Kim-independence over arbitrary sets. We use the Kim-Pillay theorem for Kim-forking \cref{kimpillaykim}. By \cite[Lemma 2.7]{kruckman2018generic} $\forkindep^{a}$ satisfies extension, existence, monotonicity, symmetry, strong finite character, and witnessing. By \cref{amalgamalg} it satisfies the amalgamation theorem, and it is clear that it satisfies transitivity.

\vspace{10pt}
We now show that it satisfies the local character property ($7$ in \cref{kimpillaykim}). Let $\overline{a}$ be a finite tuple, $\kappa$ an uncountable regular cardinal, $(A_{i})_{i<\kappa}$ a continuous increasing sequence of structures of cardinality smaller than $\kappa$ and $A_{\kappa}=\bigcup\limits_{i<\kappa} A_{i}$. We want to show that there is an $i<\kappa$ such that $\langle A_{i}\overline{a}\rangle\forkindep^{a}_{A_{i}}A_{\kappa}$. We choose some $i_{0}<\kappa$. If $\langle A_{i_{0}}\overline{a}\rangle\centernot\forkindep^{a}_{A_{i_{0}}}A_{\kappa}$ the set $\langle A_{i_{0}}\overline{a}\rangle\cap A_{\kappa}$ is non empty and has cardinal smaller than $\kappa$, so it is contained in $A_{i_{1}}$ for some $i_{1}>i_{0}$. We iterate this process to define a sequence $(i_{j})_{j<\omega}$ and set $i=\bigcup\limits_{j<\omega}i_{j}$ which is smaller than $\kappa$ by regularity. Now if $a\in \langle A_{i}\overline{a}\rangle \cap A_{\kappa}$ it is in $\langle A_{i_{j}}\overline{a}\rangle \cap A_{\kappa}$ for some $j<\omega$ so by construction it is inside of $A_{i_{j+1}}\subseteq A_{i}$.
\end{proof}

\begin{remark} In the case where the limit $T$ of the class $K$ is a stable theory $\forkindep^{K}=\forkindep^{f}=\forkindep^{a}$ and $\forkindep^{a}=\forkindep^{\Gamma}$ since both of these relations are stationary and satisfy existence. Also the Kim-Pillay criterion for simple theories \cref{kimpillaykim} applied to the stationary relation $\forkindep^{\Gamma}$ tells us that $\forkindep^{\Gamma}$ satisfies local character if and only if $T$ is stable.
\end{remark}

\begin{remark} Notice that in order to get \cref{nsop1ouais} we only need $1-5$ and \cref{amalgamalg} to hold, and not the full generality of Algebraically Independent 3-Amalgamation. In our setting when applying the amalgamation theorem to $E,A_{0},A_{1},B_{0},B_{1}$ by \cref{amalgamalg} and Algebraically Independent 3-Amalgamation we have $AB_{i}\forkindep^{K}_{B_{i}}B_{0}B_{1}$ for $i=0,1$ and $AB_{0}\forkindep^{K}_{A}AB_{1}$. A proof is given in \cite[Proposition Proposition 2.2.1.6]{bossut2025looking} that this holds in any NSOP$_1$ theory with existence.
\end{remark}

\begin{remark} The proof of \cref{nsop1ouais} can be generalized to show that if $T'$ is a theory that admits a stationary independence relation $\forkindep$ satisfying the list of properties of \cref{gammapropP} and such that $\forkindep^{a}$ satisfies the independence theorem over arbitrary sets, then $\forkindep$ implies $\forkindep^{f}$, and so $T'$ is NSOP$_1$, has existence and $\forkindep^{a}=\forkindep^{K}$ over arbitrary sets.
\end{remark}

The following proposition is stated in \cite{kruckman2018generic} for the model companion of the empty theory in a language $L$ but it works more generally for any theory in which $\forkindep^{a}$ satisfies the independence theorem. This fact is part of some more general results about deducing weak elimination of imaginaries from the properties of some independence relations (see \cite[Section 4.4]{conant2019independence} for example).

\begin{prop}\cite[Proposition 3.22]{kruckman2018generic} If in a monster model of a complete theory $T'$ the relation $\forkindep^{a}$ satisfies the independence theorem then $T'$ has weak elimination of imaginaries.
\end{prop}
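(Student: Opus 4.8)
The plan is to prove the stronger statement that for every imaginary $e\in\mathbb{M}^{eq}$, writing $C:=\mathrm{acl}^{eq}(e)\cap\mathbb{M}$ for the set of \emph{real} elements algebraic over $e$, one has $e\in \mathrm{dcl}^{eq}(C)$. By compactness this yields a finite real tuple $c\subseteq C\subseteq \mathrm{acl}^{eq}(e)$ with $e\in \mathrm{dcl}^{eq}(c)$, which is exactly weak elimination of imaginaries. To prove $e\in \mathrm{dcl}^{eq}(C)$ it suffices to show that any $e'\equiv_{C}e$ satisfies $e'=e$. I would fix a real tuple $a$ and a $\emptyset$-definable function $f$ with $f(a)=e$ (such $a$ exists since $e$ is the class of a real tuple, so $e\in\mathrm{dcl}^{eq}(a)$), and, using an automorphism fixing $C$ and sending $e$ to $e'$, produce a real tuple $a'$ with $f(a')=e'$ and $a\equiv_{C}a'$.

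The heart of the argument is to manufacture, via the independence theorem for $\forkindep^{a}$ over the real base $C$, a single real tuple $A$ that simultaneously represents both classes, i.e.\ $f(A)=e$ and $f(A)=e'$, forcing $e=e'$. First I would, using extension for $\forkindep^{a}$, choose $A_{0}\models \tp(a/Ce)$ with $A_{0}\forkindep^{a}_{C}a$ and $A_{1}\models \tp(a'/Ce')$ with $A_{1}\forkindep^{a}_{C}a'$, and arrange $a\forkindep^{a}_{C}a'$ by replacing $a'$ with a suitable conjugate over $Ce'$. Since $A_{0}$ and $A_{1}$ both realise $\tp(a/C)$ we have $A_{0}\equiv_{C}A_{1}$, so with $B_{0}:=a$ and $B_{1}:=a'$ all the hypotheses of the independence theorem over $C$ are met; it produces $A$ with $A\equiv_{Ca}A_{0}$ and $A\equiv_{Ca'}A_{1}$. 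Because $A_{0}\equiv_{Ce}a$ gives $f(A_{0})=e$ and $e\in \mathrm{dcl}^{eq}(a)$, any automorphism fixing $Ca$ pointwise and carrying $A_{0}$ to $A$ fixes $e$, whence $f(A)=e$; symmetrically $f(A)=e'$, and therefore $e=e'$.

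The main obstacle is reconciling two competing demands on the base: to keep $f$ evaluating to $e$ I must work over $Ce$, which mentions the imaginary $e$, whereas the independence theorem I am given is stated over the real set $C$. The resolution, and the reason the specific choice $C=\mathrm{acl}^{eq}(e)\cap\mathbb{M}$ matters, is that $C\subseteq \mathrm{acl}^{eq}(e)$ forces $\mathrm{acl}^{eq}(Ce)=\mathrm{acl}^{eq}(e)$ and hence $\mathrm{acl}^{eq}(Ce)\cap\mathbb{M}=C$. Consequently the algebraic independence $A_{0}\forkindep^{a}_{Ce}a$ obtained from extension over $Ce$ — which is what preserves $f(A_{0})=e$ — descends to the real independence $A_{0}\forkindep^{a}_{C}a$ needed to invoke the independence theorem: any real $d\in \mathrm{acl}(CA_{0})\cap \mathrm{acl}(Ca)$ lies in $\mathrm{acl}^{eq}(Ce)\cap\mathbb{M}=C$. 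The same descent justifies the independent choices of $A_{1}$ and of the conjugate of $a'$ (here one checks first that $\mathrm{acl}^{eq}(e')\cap\mathbb{M}=C$, which follows from $e'\equiv_{C}e$). Once this descent is established the amalgamation goes through, and the only external inputs are extension for $\forkindep^{a}$ over arbitrary bases together with the hypothesised independence theorem for $\forkindep^{a}$.
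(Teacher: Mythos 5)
Your proof is correct and takes essentially the same route as the cited proof of \cite[Proposition 3.22]{kruckman2018generic}: reduce weak elimination of imaginaries to showing $e\in \mathrm{dcl}^{eq}(C)$ for $C=\mathrm{acl}^{eq}(e)\cap\mathbb{M}$, use full existence for $\forkindep^{a}$ (valid in any complete theory, hence in $(T')^{eq}$) to pick realizations over $Ce$ and $Ce'$, and apply the independence theorem over $C$ to merge the two classes; your descent observation that $\mathrm{acl}^{eq}(Ce)\cap\mathbb{M}=C$ turns the $eq$-independence into the real algebraic independence needed is exactly the key point of that argument. The only (cosmetic) adjustment is order of quantification: replace $a'$ by its $Ce'$-conjugate achieving $a\forkindep^{a}_{C}a'$ \emph{before} choosing $A_{1}$ (or transport $A_{1}$ along the same automorphism), which your construction already accommodates.
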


\begin{cor} $T$ has weak elimination of imaginaries.
\end{cor}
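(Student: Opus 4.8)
The plan is to apply the general proposition just stated to the particular theory $T$. That proposition takes as input a complete theory $T'$ whose monster model has the property that algebraic independence $\forkindep^{a}$ satisfies the independence theorem, and concludes that $T'$ has weak elimination of imaginaries. So the entire task reduces to checking that $T$ meets these two hypotheses.

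The first hypothesis, completeness, holds by construction: $T$ was defined as the theory of the $K$-$\aleph_{1}$-saturated structure $\mathcal{M}$ produced in \cref{construcMP}, and we already noted that this is a complete theory. The second hypothesis, that $\forkindep^{a}$ satisfies the independence theorem in $\mathbb{M}$, is precisely the content of \cref{amalgamalg}, where we deduced it over an arbitrary base $E$ from Algebraically Independent 3-Amalgamation (Condition $5$) via a compactness reduction to finitely generated structures. Both hypotheses being satisfied, the cited proposition applies verbatim and yields weak elimination of imaginaries for $T$.

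Consequently there is no real obstacle at this stage, and essentially no calculation to carry out: the substantive work, namely verifying the independence theorem for $\forkindep^{a}$, was already done in \cref{amalgamalg}, and it is exactly this input that the quoted proposition converts into weak elimination of imaginaries. The only thing worth emphasizing is that we invoke the independence theorem over \emph{arbitrary} sets $E$, not merely over models, since weak elimination of imaginaries is a statement about definable closures of imaginary elements over arbitrary parameter sets; \cref{amalgamalg} was stated in exactly this generality, so the match is clean.
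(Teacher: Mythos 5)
Your proposal is correct and matches the paper's (implicit) argument exactly: the corollary is an immediate application of the quoted Proposition of Kruckman--Ramsey, with completeness of $T$ coming from its construction as the theory of the structure of \cref{construcMP} and the hypothesis on $\forkindep^{a}$ supplied by \cref{amalgamalg}. Your closing remark that \cref{amalgamalg} gives the independence theorem over arbitrary sets, which is the generality the proposition needs, is accurate and a fair point of emphasis.
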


\begin{prop}\label{forkindep} For any $E\subseteq A,B$ we have $A\forkindep^{f}_{E}B$ iff $A\forkindep^{d}_{E}B$ iff $A\forkindep^{M}_{E}B$, where $A\forkindep^{M}_{E}B$ is defined as $\langle AC \rangle\forkindep^{a}_{C}B$ for every substructure $E\subseteq C \subseteq B$.
\end{prop}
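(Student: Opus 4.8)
The plan is to establish the cycle $\forkindep^{f}_{E}\Rightarrow\forkindep^{d}_{E}\Rightarrow\forkindep^{M}_{E}\Rightarrow\forkindep^{f}_{E}$. First I would record the reading of $\forkindep^{M}$ that matches the abstract's slogan: by \cref{nsop1ouais} we have $\forkindep^{a}=\forkindep^{K}$, by \cref{clotalg} $acl=\langle\cdot\rangle$, and since $A\forkindep^{a}_{C}B\iff\langle AC\rangle\forkindep^{a}_{C}B$, the relation $A\forkindep^{M}_{E}B$ is exactly ``$A\forkindep^{K}_{C}B$ for every $E\subseteq C\subseteq B$'', i.e. Kim-independence with base monotonicity forced. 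The implication $\forkindep^{f}\Rightarrow\forkindep^{d}$ is then the general fact that a dividing formula forks.

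For $\forkindep^{d}\Rightarrow\forkindep^{M}$ I would argue the contrapositive, which is the concrete algebraic heart of the easy half. If $A\centernot\forkindep^{M}_{E}B$ there is $E\subseteq C\subseteq B$ with $\langle AC\rangle\cap B\supsetneq C$; pick $d\in(\langle AC\rangle\cap B)\setminus C$ and write $d=t(\overline{a},\overline{c})$ with $\overline{a}\in A$, $\overline{c}\in C$. As $\langle\cdot\rangle=acl$ and $d\notin C$, the type $\tp(d/C)$ is non-algebraic, so it admits a $C$-indiscernible sequence $(d_{i})_{i<\omega}$ of pairwise distinct realizations with $d_{0}=d$. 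Since $\overline{c}\in C$, the sequence $((\overline{c},d_{i}))_{i<\omega}$ is $C$-indiscernible, hence $E$-indiscernible; the formula $t(x_{\overline{a}},\overline{c})=d$ lies in $\tp(A/EB)$, yet $\{t(x_{\overline{a}},\overline{c})=d_{i}:i<\omega\}$ is inconsistent because a term takes a single value. Thus $\tp(A/EB)$ divides over $E$, so $A\centernot\forkindep^{d}_{E}B$.

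The remaining implication $\forkindep^{M}\Rightarrow\forkindep^{f}$ is where the work is. I would deduce it from two facts via the standard principle that a relation refining non-dividing and admitting extension also refines non-forking: one builds a global extension of $\tp(A/EB)$ all of whose small restrictions are $\forkindep^{M}$ over $E$, hence non-dividing, so the global type does not fork. The first fact, \emph{extension for $\forkindep^{M}$}, I would obtain from the extension and base monotonicity of the stronger stationary relation $\forkindep^{\Gamma}$ (\cref{gammapropP}): choosing $A'\equiv_{EB}A$ with $A'\forkindep^{\Gamma}_{B}B'$ keeps $A'\forkindep^{M}_{E}B$ and propagates independence to all intermediate bases $\langle CB\rangle$ by base monotonicity and transitivity of $\forkindep^{a}$.

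The second fact, $\forkindep^{M}\Rightarrow\forkindep^{d}$, is the main obstacle. Given $A\forkindep^{M}_{E}B$ and an $E$-indiscernible $(B_{i})$ with $B_{0}=B$, I must simultaneously realize $\bigcup_{i}\tp(A/EB_{i})$; the natural tool is the independence theorem for $\forkindep^{a}$ (\cref{amalgamalg}) amalgamating the $E$-conjugate copies of $A$ along the sequence, but this needs the $B_{i}$ to be $\forkindep^{a}$-independent, which an arbitrary $E$-indiscernible sequence need not be. The key maneuver is to pass to the base $C:=B\cap\langle(B_{i})_{i\neq0}\rangle$: then $E\subseteq C\subseteq B$, so $\forkindep^{M}$ yields $A\forkindep^{a}_{C}B$, and $B_{0}\forkindep^{a}_{C}\langle(B_{i})_{i\neq0}\rangle$ holds by the very definition of $C$. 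Turning this into a genuine $C$-indiscernible $\forkindep^{a}$-Morley sequence — the familiar ``an indiscernible sequence is a Morley sequence over its algebraic base'' step, which requires a shift-invariant choice of $C$ obtained by re-indexing along $\mathbb{Z}$ — and then amalgamating by transfinite induction and compactness, is the delicate point; the rest is bookkeeping. Once $\forkindep^{M}\Rightarrow\forkindep^{d}$ and extension for $\forkindep^{M}$ are available, the principle closes the cycle and gives $\forkindep^{f}=\forkindep^{d}=\forkindep^{M}$.
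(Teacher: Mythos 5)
Your skeleton is the same as the paper's (trivial $\forkindep^{f}\Rightarrow\forkindep^{d}$; $\forkindep^{d}\Rightarrow\forkindep^{M}$; extension for $\forkindep^{M}$; $\forkindep^{M}\Rightarrow\forkindep^{d}$ via the independence theorem for $\forkindep^{a}$; close the cycle by extension), and your concrete dividing argument for $\forkindep^{d}\Rightarrow\forkindep^{M}$ is a correct proof of a fact the paper merely asserts. The genuine gap is your extension step for $\forkindep^{M}$. Choosing $A'\equiv_{B}A$ with $\langle A'B\rangle\forkindep^{\Gamma}_{B}B'$ and then ``propagating by base monotonicity and transitivity'' only controls the bases $C'$ with $B\subseteq C'\subseteq B'$: there base monotonicity of $\forkindep^{\Gamma}$ (\cref{gammapropP}) indeed gives $\langle A'C'\rangle\cap B'=C'$. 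But the whole content of extension lies in the bases $C'\subseteq B'$ whose trace $C:=C'\cap B$ is a proper substructure of $B$, and for those you invoke ``base monotonicity of $\forkindep^{a}$,'' which fails in these limit theories --- its failure is exactly why $\forkindep^{M}$ differs from $\forkindep^{K}=\forkindep^{a}$ (\cref{nsop1ouais}) and why the proposition has content. Even after using base monotonicity of $\forkindep^{\Gamma}$ to reduce to showing $\langle A'C'\rangle\cap\langle BC'\rangle=C'$ inside the free amalgam $\langle A'B\rangle\oplus_{B}\langle BC'\rangle$, this modularity statement is not a formal consequence of the universal property: the paper proves it by applying Condition $5$ (Algebraically Independent 3-Amalgamation) over the base $C$ to the data $\langle A'C\rangle$, $B$, $C'$, $\langle A'B\rangle$, $\langle A'C'\rangle$, $B'$ (using $\langle A'C\rangle\cap B=C$ from $A\forkindep^{M}_{E}B$), and then uses the uniqueness clause of the universal property of $\forkindep^{\Gamma}$ to transport the conclusion back into $\langle A'B'\rangle$. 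Since the abstract properties of \cref{gammapropP} already hold for classes satisfying only Conditions $1$--$4$, while the paper points out such classes can fail Condition $5$, extension for $\forkindep^{M}$ cannot be derived purely from \cref{gammapropP} as you propose; this step needs the amalgamation hypothesis and is where the real work of the paper's proof sits.

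A secondary weakness: in $\forkindep^{M}\Rightarrow\forkindep^{d}$ your tail base $C=B\cap\langle(B_{i})_{i\neq 0}\rangle$ is chosen inside $B$ so that $\forkindep^{M}$ applies without extension, but then the $B_{i}$ for $i\neq 0$ need not contain $C$ nor form a $C$-indiscernible sequence, and the ``Morley over the limit base'' lemma you defer (shift-invariance of $C$, stabilization of intersections along the sequence) is precisely the delicate content, not bookkeeping. The paper instead prepends an initial copy $\Tilde{I}$ of the sequence, sets $E'=\langle\Tilde{I}\rangle$ and $B'_{i}=\langle E'B_{i}\rangle$, so that indiscernibility immediately forces $B'_{i}\cap\langle B'_{<i}\rangle\subseteq E'$; the price is that the base $E'$ leaves $B$, which is paid for by invoking extension for $\forkindep^{M}$ a second time (``we can assume $A\forkindep^{M}_{E}\langle\Tilde{I}\cup I\rangle$''). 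So your unproved extension step is load-bearing for this half as well; once extension is genuinely established, the inductive amalgamation via \cref{amalgamalg} and compactness goes through exactly as you sketch.
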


\begin{proof} We begin by showing that in $\mathbb{M}$ the relation $\forkindep^{M}$ satisfies extension. Consider $E\subseteq A,B\subseteq \mathbb{M}$ such that $A\forkindep^{M}_{E}B$ and $B\subseteq B'$. We can find $A'$ such that $A'\equiv_{B}A$ and $\langle A'B\rangle \forkindep^{\Gamma}_{B}B'$, let us write $D'=\langle A'B'\rangle$. We show that $A'\forkindep^{M}_{E}B'$. Consider $C'\subseteq B'$ and define $C=C'\cap B$, we want to show that $\langle A'C'\rangle \cap B'=C'$. By Condition $5\ $, since $\langle A'C\rangle \cap C'=C$, $\langle A'C\rangle \cap B=C$ (because $A\forkindep^{M}_{E}B$) and $B\cap C'=C$, the following diagram with the solid arrows, where $\langle A'C'\rangle$ is the structure generated by $A'$ and $C'$ in $D'$, can be completed with the dashed arrows and a structure $D_{C'}$ such that $\varphi_{B'}(B')\cap \varphi_{\langle A'C'\rangle}(\langle A'C'\rangle)=\varphi_{B'}(C')=\varphi_{\langle A'C'\rangle}(C')$.

\begin{figure}[hbtp]
    \centering
    
\begin{tikzcd}
                                         & \langle A'C'\rangle \arrow[rr, "\subseteq"] \arrow[rrr, "\varphi_{\langle A'C'\rangle}", dashed, bend left] &                                                                                                 &  D' \arrow[r, "\exists!\varphi"', dotted] & D_{C'} \\
\langle A'C\rangle \arrow[rr] \arrow[ru] &                                                                                                             & \langle A'B\rangle \arrow[ru, "\subseteq"] \arrow[rru, "\varphi_{\langle A'B\rangle}"', dashed] &                                                                      &        \\
                                         & C' \arrow[uu] \arrow[rr]                                                                                    &                                                                                                 & B' \arrow[uu, "\subseteq"] \arrow[ruu, "\varphi_{B'}"', dashed]      &        \\
C \arrow[rr] \arrow[uu] \arrow[ru]       &                                                                                                             & B \arrow[ru] \arrow[uu]                                                                         &                                                                      &       
\end{tikzcd}

\caption{Characterizing $\forkindep^{f}$}

\end{figure}

There is a unique map $\varphi$ such that $\varphi\vert_{\langle A'B\rangle}=\varphi_{\langle A'B\rangle} $ and $\varphi\vert_{ B'}=\varphi_{ B'} $. The two maps $\varphi_{\langle A'C'\rangle}$ and $\varphi\vert_{\langle A'C'\rangle}$ coincide on the sets $C'$ and $\langle A'C\rangle$, and these sets generates $\langle A'C'\rangle$, so $\varphi\vert_{\langle A'C'\rangle}=\varphi_{\langle A'C'\rangle} $. If $t(a',c')=t'(b')$ for for two terms $t,t'$, $a'\in A'$, $c'\in C'$ and $b'\in B'$ inside of $D'$ then by applying $\varphi$ we get that $\varphi_{\langle A'C'\rangle}(t(a',c'))=\varphi_{B'}(t'(b'))$, so by Condition $5$ there is a $c\in C'$ such that $\varphi_{\langle A'C'\rangle} (c)=\varphi_{\langle A'C'\rangle}(t(a',c')) $ and $\varphi_{B'}(c)= \varphi_{B'}(t'(b'))$. Since the maps $\varphi_{B'}$ and $\varphi_{\langle A'C'\rangle}$ are embeddings we get that $t(a',c')=t'(b')\in B'$, so $\langle A'C'\rangle \cup B'=C'$. So $\forkindep^{M}$ satisfies extension.
\vspace{10pt}

It is always true that $\forkindep^{d}\implies \forkindep^{M}$. To show that $\forkindep^{M}\implies\forkindep^{f}$ it is sufficient to show that $\forkindep^{M}\implies \forkindep^{d}$ since $\forkindep^{M}$ satisfies extension. For this assume that $A\forkindep^{M}_{E}B_{0}$ and consider an indiscernible sequence $I=(B_{i})_{i<\omega}$. Let $p(x,B_{0}):= \tp(A/B_{0})$, we want to show that $\bigcup\limits_{i<\omega} p(x,B_{i})$ is consistent.

\vspace{10pt}
We begin defining some set $E' \supseteq E$ and an $E'$ indiscernible sequence $(B'_{i})_{i<\omega}$ such that $B_{i}\subseteq B'_{i}$ for all $i<\omega$, $B'_{i}\forkindep^{a}_{E'}B'_{<i}$ for every $i<\omega$, and some set $A' \supseteq A$. Let $q(x',B'_{0}):= \tp(A'/B'_{0})$ then it is sufficient to show that $\bigcup\limits_{i<\omega} q(x,B'_{i})$ is consistent.

\vspace{10pt}
Begin by adding a copy of $\omega$ at the beginning of the sequence $I:= (B_{i})_{i < \omega}$, i.e. we find $\Tilde{I}=(\Tilde{B}_{i})_{i<\omega}$ such that $\Tilde{I} \frown I$ is $E$-indiscernible. By extension we can assume that $A\forkindep^{M}_{E}\langle \Tilde{I}  \cup I\rangle$. Now set $E'=\langle \Tilde{I} \rangle$ to be the structure generated by $\Tilde{I}$. Consider the structures $B'_{i}:=\langle E'B_{i}\rangle$ for $i<\omega$. They form an $E'$-indiscernible sequence. Define $A' :=\langle E'A\rangle$, by base monotonicity $A'\forkindep^{M}_{E'}\langle (B'_{i})_{i<\omega}\rangle$. Now, if $b_{i}\in B'_{i}$ is also in $\langle B'_{<i}\rangle$ then it has to be in $E'$ by indiscernibility of $(\Tilde{B}_{i})_{i<\omega} \frown (B_{i})_{i < \omega}$ (this is the point of adding $(\Tilde{B}_{i})_{i<\omega}$: algebraic relations happening in the sequence are already happening inside of this part of the sequence). So we got the sequence and the sets we want.
\vspace{10pt}

Since $B'_{0}\equiv_{E'}B'_{i}$ there is some $\Tilde{A}_{i}$ such that $A'B'_{0}\equiv_{E'}\Tilde{A}_{i}B'_{i}$. We define by induction on $i<\omega$ a sequence of structures $A'_{i}$ such that $A'_{i}\forkindep^{a}_{E'}B'_{\leq i}$ and for all $j\leq i$ $A'B'_{0}\equiv_{E'}A'_{i}B'_{j}$. Let $A'_{0}=\Tilde{A}_{0}=A'$. If $A'_{i}$ has been defined then $A'_{i}\forkindep^{a}_{E'}B'_{\leq i}$, $\Tilde{A}_{i+1}\forkindep^{a}_{E'}B'_{i+1}$, $B'_{i+1}\forkindep^{a}_{E'}B'_{\leq i}$ and $\Tilde{A}_{i}\equiv_{E'} A'_{i}$, so by \cref{amalgamalg} we can find a suitable $A'_{i+1}$. By compactness there is some $A''$ such that $A''B'_{i}\equiv_{E'}A'B'_{0}$ for all $i<\omega$ and we get the consistency that we want.

\vspace{10pt}
To conclude if $A\forkindep^{M}_{E}B$ then $A\forkindep^{d}_{E}B$, by extension $A\forkindep^{f}_{E}B$ also holds, so these three notions are equivalent.\end{proof}

\begin{remark}\label{thornfork} We get from \cref{forkindep} that in $T$ thorn-forking coincides with forking, $T$ is rosy (which is defined as thorn-forking satisfying local character - see \cite{onshuus2003thforking} about rosy theories) if and only if $T$ is simple.
\end{remark}

In the previous proposition we insist on the fact that we are considering two algebraically closed sets $A,B$ containing the basis $E$; the reason for this is that in NSOP$_1$ theories it is not always true that $\forkindep^{f}=\forkindep^{d}$ over arbitrary sets. A counterexample is given in \cite[Section 3.2]{conant2023three}: if $T^{\centernot O}_{f}$ is the model completion of the empty theory in a language containing only a binary function and $T_{f}^{eq}$ its imaginary expansion, then in this paper a real tuple $a$, small model $M$ and some unordered pair $d$ such that $a\forkindep^{d}_{M}Md$ and $a\centernot\forkindep^{d}_{M}acl^{eq}(Md)$ are given.

\section{The parameterized class}

In this section we consider a Fraïssé class $K_{0}$ in a language $L_{0}$ and a second Fraïssé class $K_{P}$ in a language $L_{P}$ which is one sorted. We show that we can `parameterize' the class $K_{0}$ by $K_{P}$ to get a new class $K$ of structures in a new language that satisfies $(H)$ whenever $K_{0}$ and $K_{P}$ also does, and that in some of these cases the limit theory of $K$ is not simple, and so is strictly NSOP$_1$ by \cref{nsop1ouais}.

\subsection{The definition of the parameterized theory}

\begin{definition}
Let $L$ be a new language built from $L_{0}$ by adding a new sort $P$ equipped with an $L_{P}$-structure. We will refer to $P$ as the parameter sort, we will refer to the sorts of $L_{0}$ as the object sorts, denoted $O$. We replace every symbol of constant $c$ in $L_{0}$ of sort $O$ by a function symbol $\overline{c} : P \rightarrow O$, every function symbol $f : \prod_{i< n }O_{i}\rightarrow \prod_{i< m }O'_{i} $ by a function symbol $\overline{f} : P\times \prod_{i< n }O_{i}\rightarrow \prod_{i< m }O'_{i} $ and every relation symbol $R \subseteq \prod_{i< n }O_{i}$ by a relation symbol $\overline{R} \subseteq P\times \prod_{i< n }O_{i}$.
\end{definition}

Let us fix some notations: For a symbol of function $f\in L$ we will sometimes write $f_{p}(\overline{x})$ instead of $\overline{f}(p,\overline{x})$, and similarly for relation symbols. For an $L_{0}$-formula $\varphi$ and a parameter $p$ we will write $\varphi_{p}$ for the $L$-formula parameterized by $p$. For an $L$-structure $A$ and $p\in P(A)$ we write $A_{p}$ to denote the $L_{0}$-structure induced by $p$ on $O(A)$. For a set of objects $A$ and a tuple of parameters $\overline{p}$ we will write $\langle A \rangle_{\overline{p}}$ for the structure generated by $(A,\overline{p})$, we will also write $\langle A \rangle$ for the structure generated by a set $A$ (possibly containing parameters and objects), so $\langle A \rangle = \langle O(A)\rangle_{\langle P(A) \rangle}$.

\vspace{10pt}
We begin by citing some results of Baudisch on the parametrization of a theory showing that this construction works in a more general context than the one we are considering here. These results are proven essentially by decomposing quantifier-free $L_{P}$-formulas into a conjunction of $L$-formulas evaluated in different parameters. 

\begin{theorem} \cite[Theorem 2.9]{baudisch2002generic}: Let $T_{0}$ be a model complete theory which eliminates the quantifier $\exists^{\infty}$. Let $L_{P}$ be the empty language and $L$ be the parametrized language defined above. Let $T'$ be the $L$ theory stating that for every parameter $p\in P$ the $L_{0}$-structure induced by $p$ is a model of $T_{0}$. Then the theory $T'$ has a model companion $T$.
\end{theorem}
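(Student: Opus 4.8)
The plan is to prove that the class of existentially closed models of $T'$ is elementary; by the standard criterion this is equivalent to the existence of a model companion, which is then axiomatized by the theory of those models. The crucial use of the hypothesis is that $T_{0}$ being model complete means exactly that every model of $T_{0}$ is existentially closed among models of $T_{0}$: thus any existential $L_{0}$-condition realizable in some $T_{0}$-extension of $M\models T_{0}$ is already realized in $M$. I would exploit this, together with the fact that $L_{P}$ is empty and that distinct parameters induce \emph{independent} $L_{0}$-structures on the shared object set, to reduce realizability of an existential $L$-formula in a $T'$-extension to a first-order condition on the base structure.

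First I would set up the decomposition of existential $L$-formulas. Fix $\exists\overline{y}\,\psi(\overline{x},\overline{y})$ with $\psi$ quantifier free, where $\overline{x},\overline{y}$ each split into parameter and object variables. Since $L_{P}$ is empty, every atomic subformula of $\psi$ is an equality of parameters, an equality of objects, or a parameterized $L_{0}$-atomic formula mentioning exactly one parameter. Case-splitting on the finitely many equality types of all parameters and all object witnesses, I may assume the parameters $p_{1},\dots,p_{m}$ are pairwise distinct and that each object witness is either equal to a named old object or genuinely new and distinct from the others. After this reduction $\psi$ becomes, up to the fixed equalities, a conjunction $\bigwedge_{i\le m}\psi_{i}$ where $\psi_{i}$ is an $L_{0}$-formula in the object variables to be evaluated in $A_{p_{i}}$, together with pure object-equalities; the clauses $\psi_{i}$ interact only through the shared object variables.

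Next I would isolate the realizability condition. To realize the formula in some $B\supseteq A$ with $B\models T'$ one must extend the shared object set by the new witnesses so that, for every parameter $p$ of $B$, the structure $B_{p}$ is still a model of $T_{0}$ and, for the involved $p_{i}$, satisfies $\psi_{i}$. An object witness equal to an old object is controlled directly by $A_{p_{i}}$, while a genuinely new witness must realize $\psi_{i}$ over $A_{p_{i}}$ in a proper $T_{0}$-extension; since $A_{p_{i}}$ is existentially closed this is possible precisely when $\psi_{i}$ has infinitely many solutions in $A_{p_{i}}$, i.e.\ when $A_{p_{i}}\models\exists^{\infty}z\,\psi_{i}$. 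This is exactly where elimination of $\exists^{\infty}$ enters: it replaces the non-first-order clause $\exists^{\infty}z\,\psi_{i}$ by a clause ``there are more than $n_{i}$ realizations'' for a bound $n_{i}$ uniform in the parameter tuple and in $A$. For a fresh parameter $q$ the only requirement is that the $L_{0}$-trace of $\psi$ at $q$ be consistent with $T_{0}$, a fixed condition independent of $A$. Because distinct parameters carry independent $L_{0}$-structures and the behaviour of a fresh object under each parameter can be prescribed independently, these per-parameter conditions can be satisfied by a single extension. Collecting them yields a first-order formula $\chi_{\varphi}(\overline{x})$ equivalent, in models of $T'$, to ``$\varphi(\overline{x})$ is realizable in some $T'$-extension''; I would then take $T$ to be $T'$ together with the scheme $\forall\overline{x}\,(\chi_{\varphi}(\overline{x})\to\exists\overline{y}\,\psi(\overline{x},\overline{y}))$ over all existential $\varphi$, and verify that its models are exactly the existentially closed models of $T'$.

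The main obstacle will be the simultaneous amalgamation together with the uniformity of the bounds. One must genuinely build one extension $B\models T'$ in which the shared new objects realize the prescribed $L_{0}$-type at every involved parameter at once, keep every remaining $B_{p}$ a model of $T_{0}$, and equip each fresh parameter with a freshly chosen $T_{0}$-structure on the enlarged object set; the independence of the parameter-structures and the existential closedness of each $A_{p_{i}}$ are what make this consistent, and witnesses should be added one at a time by induction on their number so that nested existentials are handled correctly. The delicate point is checking that a single finite bound $n_{i}$, supplied by $\exists^{\infty}$-elimination for the $L_{0}$-formula underlying $\psi_{i}$, works uniformly across all parameters and all base models, so that $\chi_{\varphi}$ is a legitimate first-order formula; establishing both directions of the equivalence ``$\chi_{\varphi}$ holds iff $\varphi$ is realizable in a $T'$-extension'' is the technical heart of the argument.
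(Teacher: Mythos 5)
This statement is not proved in the paper: it is quoted from Baudisch \cite[Theorem 2.9]{baudisch2002generic}, and the paper only remarks that the proof goes ``by decomposing quantifier-free formulas into a conjunction of $L_{0}$-formulas evaluated in different parameters.'' Your proposal follows exactly that route, and the two hypotheses are used in the right places: model completeness makes every $T_{0}$-extension of $A_{p_{i}}$ elementary, so a \emph{new} object witnessing $\psi_{i}$ exists in some extension iff $\psi_{i}$ is non-algebraic over $A_{p_{i}}$, and elimination of $\exists^{\infty}$ is precisely what turns this non-algebraicity into a first-order clause uniformly in the parameters. So as an outline the proposal is the standard (and correct) argument.

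Two points deserve flagging. First, your claim that, since $L_{P}$ is empty, every atomic subformula mentions exactly one parameter is false as stated: object terms can nest function symbols indexed by different parameters, e.g.\ $\overline{f}(p,\overline{g}(q,x))$ is atomic-level material involving both $p$ and $q$. You must first flatten terms by introducing fresh existentially quantified object variables naming all subterms (harmless for existential formulas, and exactly the device this paper uses in its own decomposition in \cref{cond4}); your case-split on equality types does not by itself achieve this. Second, what you label the ``main obstacle'' --- building a single $T'$-extension realizing the per-parameter prescriptions simultaneously, and the uniformity of the bounds $n_{i}$ --- is genuinely the entire technical content: one must realize each involved requirement in a large enough elementary extension of $A_{p_{i}}$, equalize all the universes by L\"owenheim--Skolem and transport along bijections fixing $O(A)$ and the designated witnesses (possible because distinct parameters share only the bare object set), and also extend $A_{p}$ elementarily for every \emph{uninvolved} old parameter $p$, which silently requires these structures to be infinite (itself a consequence of uniform $\exists^{\infty}$-elimination). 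Since you defer rather than execute these steps, what you have is a correct and well-aimed plan matching Baudisch's method, not yet a complete proof.
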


\begin{cor} \cite[Corollary 2.10]{baudisch2002generic} and \cite[Theorem 3.1]{baudisch2002generic} If $T_{0}$ is model complete and eliminates $\exists^{\infty}$ then $T$ is complete and eliminates $\exists^{\infty}$.
\end{cor}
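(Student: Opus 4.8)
The plan is to read both statements off the existence of the model companion $T$ (the preceding theorem) together with the hypotheses on $T_{0}$, using crucially that $L_{P}$ is empty: a parameter carries no structure of its own, and distinct parameters interact only through the objects they act on.

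\textbf{Completeness.} Since $T$ is a model companion it is model complete, so it suffices to prove the joint embedding property for $T'$. Indeed, if any two models of $T'$ embed into a common model of $T'$, then any two $M_{0},M_{1}\models T$ (being in particular models of $T'$) embed into a common model of $T'$, hence into a common existentially closed model $N\models T$; model completeness makes both embeddings elementary, so $M_{0}\equiv N\equiv M_{1}$ and $T$ is complete. To obtain joint embedding of $A,B\models T'$, I would set $P(C)=P(A)\sqcup P(B)$ and let $O(C)$ be an infinite set containing $O(A)\sqcup O(B)$, and for each $p\in P(A)$ I would take the induced $L_{0}$-structure $C_{p}$ to be any model of $T_{0}$ on the universe $O(C)$ extending $A_{p}$ (and symmetrically for $q\in P(B)$); such an extension exists because $A_{p}\models T_{0}$ and $T_{0}$ admits arbitrarily large models. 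As there are no constants in $L$ there is no base to amalgamate over, and one checks directly that $A$ and $B$ are $L$-substructures of $C$ and that $C\models T'$.

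\textbf{Elimination of $\exists^{\infty}$.} Here I would follow Baudisch's decomposition: a quantifier-free $L$-formula is a Boolean combination of equalities between parameter variables and of formulas $\psi_{p}(\bar{o})$ with $\psi$ an $L_{0}$-formula and $p$ a parameter variable, and modulo the model-complete theory $T$ this suffices to analyze the formulas in question. I would split on the sort of the quantified variable $x$. If $x$ ranges over the parameter sort, then after fixing the remaining variables the condition on $x$ is a finite specification of how $x$ acts on finitely many fixed objects together with equalities to the finitely many named parameters; existential closedness of models of $T$ guarantees that any consistent such action is realized by infinitely many parameters, so the solution set is finite only when $x$ is forced to lie among the named parameters, which is a first-order condition with a uniform bound. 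If $x$ ranges over the object sort, then after fixing parameters the formula becomes a Boolean combination of $L_{0}$-conditions $\psi^{j}_{p_{j}}(x,\bar{o})$ across the finitely many parameters $p_{1},\dots,p_{m}$ occurring; applying the $\exists^{\infty}$-elimination of $T_{0}$ to each $\psi^{j}$ gives bounds $n_{j}$, and genericity of $T$ lets one assemble them into a single bound independent of the chosen parameters.

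\textbf{Main obstacle.} The delicate point is the object-sort case of $\exists^{\infty}$-elimination: a single object $x$ is constrained simultaneously inside the several $L_{0}$-structures $M_{p_{1}},\dots,M_{p_{m}}$ carried by distinct parameters, and one must show that the simultaneous solution set is infinite exactly when each individual constraint is (this is where the $\exists^{\infty}$-elimination of $T_{0}$ enters) and that the threshold is uniform in the parameters rather than model-by-model. This is precisely where the amalgamation used to build the model companion is needed, since it lets one realize objects whose behaviour under each parameter is prescribed essentially independently; converting the per-parameter bounds into a single uniform bound is the crux of the argument.
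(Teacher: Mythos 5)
First, a point of comparison: the paper does not prove this corollary at all — it is quoted verbatim from Baudisch (\cite[Corollary 2.10, Theorem 3.1]{baudisch2002generic}), with only the remark that the proofs go ``essentially by decomposing quantifier-free formulas into a conjunction of $L$-formulas evaluated in different parameters.'' So your proposal must stand on its own. Its completeness half essentially does: since $T_{0}$ is model complete it is $\forall\exists$-axiomatisable, hence $T'$ is inductive and the models of $T$ are exactly the existentially closed models of $T'$ (a point you use silently when you say models of $T$ are ``in particular'' models of $T'$ — it deserves a word), and JEP for $T'$ plus model completeness then gives completeness of $T$ exactly as you argue. Your disjoint-union witness for JEP is correct whenever $T_{0}$ has infinite models, which is what you need to realize a model of $T_{0}$ extending $A_{p}$ on the prescribed universe $O(C)$ (transporting by a bijection fixing $O(A)$); if $T_{0}$ has only finite models the construction breaks and that case would have to be excluded or treated separately.

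The $\exists^{\infty}$ half, however, has a genuine gap, and its first step is false as stated: a quantifier-free $L$-formula is \emph{not} a Boolean combination of parameter equalities and single-parameter formulas $\psi_{p}(\overline{o})$, because terms may nest distinct parameters, e.g.\ $\overline{f}(p,\overline{f}(q,x))$. To reach single-parameter form one must introduce fresh variables naming the subterms — precisely the manipulation this paper carries out in the proof of \cref{cond4} — and these new variables are existentially quantified and \emph{shared} across the conditions attached to different parameters. Consequently the constraints $\psi^{j}_{p_{j}}(x,\overline{o})$ are not independent, and you cannot simply apply $\exists^{\infty}$-elimination of $T_{0}$ coordinatewise and take the maximum of the bounds $n_{j}$. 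Your concluding ``main obstacle'' paragraph correctly names this difficulty (the uniform threshold for simultaneous constraints, supplied by the amalgamation behind the model companion) but does not resolve it: the equivalence ``the simultaneous solution set in an existentially closed model is infinite iff each single-parameter solution set exceeds its bound'' is only justified by your sketch for conjunctions of genuinely single-parameter literals with pairwise distinct parameters (after a case split on the equality type of the parameters), and reducing an arbitrary formula — with shared subterm witnesses, and after the model-completeness rewriting into existential form — to that shape is the actual content of Baudisch's Theorem 3.1. As written, the elimination-of-$\exists^{\infty}$ claim is therefore an outline whose crux is identified but not proved.
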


\subsection{Basic properties}

After this small parenthesis we now return to the context of classes of $L$-structures and the set of assumptions $(H)$.

\begin{definition}
Let $K$ be the class of countable $L$-structures $A$ such that $A_{p}\in K_{0}$ for every $p\in P(A)$ and $P(A)\in K_{P}$.
\end{definition}

\begin{remark} We will most often consider the case where $L_{P}$ is the empty language and $K_{P}$ is the class of countable sets. It might be easier to read the proofs with the case of $L_{P}$ being the empty language in mind.
\end{remark}

\begin{lemma} \label{aximatizparam} If $K_{0}$ and $K_{P}$ satisfies Condition $1\ $ then $K$ also does.\end{lemma}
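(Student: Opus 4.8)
The plan is to show that the class $K$ inherits Universal Axiomatisability (Condition $1$) from $K_{0}$ and $K_{P}$ by translating the failure of membership in $K$ into the failure of membership in one of the two component classes, and then pulling back the witnessing quantifier-free formula through the parameterization. Recall that by definition an $L$-structure $A$ lies in $K$ precisely when $P(A)\in K_{P}$ and $A_{p}\in K_{0}$ for every $p\in P(A)$. So if $A\notin K$, exactly one of these two conditions fails, and I treat the two cases separately.

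First I would handle the case where $P(A)\notin K_{P}$. Since $K_{P}$ satisfies Condition $1$, there is a quantifier-free $L_{P}$-formula $\chi(\overline{q})$ and a finite tuple of parameters $\overline{p}\in P(A)$ such that $P(A)\models \chi(\overline{p})$ while no structure in $K_{P}$ satisfies $\chi$. Viewing $\chi$ as an $L$-formula on the parameter sort (the $L_{P}$-structure on $P$ is part of $L$), the same formula $\chi(\overline{p})$ witnesses $A\notin K$: indeed for any $A'\in K$ we have $P(A')\in K_{P}$, so $A'\centernot\models \chi(\overline{p}')$ for every tuple $\overline{p}'\in P(A')$. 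This case is essentially immediate.

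The second case, where $A_{p_{0}}\notin K_{0}$ for some $p_{0}\in P(A)$, is the main point. Here Condition $1$ for $K_{0}$ gives a quantifier-free $L_{0}$-formula $\varphi(\overline{x})$ and a finite tuple of objects $\overline{a}\in A_{p_{0}}$ with $A_{p_{0}}\models \varphi(\overline{a})$ but no structure of $K_{0}$ satisfying $\varphi$. The idea is to parameterize $\varphi$ by $p_{0}$: the $L$-formula $\varphi_{p_{0}}(\overline{a})$ (that is, $\varphi$ with each symbol evaluated at the parameter $p_{0}$, in the notation fixed above) is quantifier-free in $L$ and is satisfied by the finite tuple $(\overline{a},p_{0})$ in $A$. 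I would then argue that $\varphi_{p}(\overline{a}')$ can never hold of any tuple $(\overline{a}',p')$ in a structure $A'\in K$: if it did, then $A'_{p'}\models \varphi(\overline{a}')$ with $A'_{p'}\in K_{0}$, contradicting the choice of $\varphi$.

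The step I expect to require the most care is confirming that $\varphi_{p_{0}}$ is genuinely a quantifier-free $L$-formula and that satisfaction of the parameterized formula in $A$ corresponds exactly to satisfaction of $\varphi$ in the induced structure $A_{p_{0}}$. This is where the definition of the parameterized language is used: each function symbol $f$ of $L_{0}$ becomes $\overline{f}(p,-)$ and each relation $R$ becomes $\overline{R}(p,-)$, so a quantifier-free $L_{0}$-formula rewrites symbol-by-symbol into a quantifier-free $L$-formula in which the single extra free variable $p$ is substituted by the constant $p_{0}$. I would note (as the excerpt indicates Baudisch's results do) that this translation respects satisfaction of quantifier-free formulas between $A_{p_{0}}$ and $A$, which is exactly the compatibility needed. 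Once this correspondence is in place, both cases combine to exhibit the required quantifier-free $L$-formula and finite tuple witnessing $A\notin K$, so $K$ satisfies Condition $1$.
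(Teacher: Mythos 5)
Your proposal is correct and follows essentially the same route as the paper's proof: the paper also splits into the two failure cases, invokes Condition $1$ in $K_{P}$ directly for the parameter sort, and in the object case takes the witnessing quantifier-free $L_{0}$-formula $\varphi$ for $A_{p}\notin K_{0}$ and parameterizes it to $\varphi_{p}$, which no structure of $K$ can satisfy since its induced $L_{0}$-structures all lie in $K_{0}$. The extra verification you flag --- that the symbol-by-symbol rewriting of $\varphi$ into the quantifier-free $L$-formula $\varphi_{p}$ (with $p$ as an additional free variable of the tuple) preserves quantifier-free satisfaction between $A_{p_{0}}$ and $A$ --- is left implicit in the paper's notation but is exactly the right point to make explicit.
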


\begin{proof} If an $L$-structure $A$ is not in $K$ then either there is some parameter $p\in P(A)$ such that $A_{p}\centernot\in K_{0}$ or $P(A)\centernot\in K_{P}$. In the last case by $1$ in $K_{0}$ there is an $L_{0}$-formula $\varphi$ such that $A_{p}\models \varphi$ and such that no structures in $K_{0}$ satisfies $\varphi$. So $A\models \varphi_{p}$ and no structure in $K$ satisfies this formula. In the second case $P(A)\centernot\in K_{P}$ is expressed by an $L_{P}$ formula by Condition $1\ $ in $K_{P}$.\end{proof}

\begin{lemma}\label{freeamalgP} If $K_{0}$ and $K_{P}$ satisfies Conditions $1\ $, $3\ $ and $4\ $ then $K$ also does.
\end{lemma}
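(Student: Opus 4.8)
The plan is to build the amalgam and the generic elements sort by sort, combining the free amalgamation of $K_P$ on the parameter sort with that of $K_0$ on the object sorts. Condition $1\ $ for $K$ is already \cref{aximatizparam}, so I would only treat Conditions $3\ $ and $4\ $. For Free Amalgamation, I start from $C\subseteq A,B\in K$ and first set $P(D):=P(A)\oplus_{P(C)}P(B)$, the free amalgam of the parameter reducts inside $K_P$; the embeddings it provides agree on $P(C)$ and have images meeting exactly in $P(C)$. The point is that no $L$-symbol produces a parameter from objects, so on the parameter sort the whole amalgamation is governed by $L_P$ alone.

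For the object sort I would take $O(D)$ to be generated by $O(A)\cup O(B)$ under all the parameterized functions $\overline{f}(q,\cdot)$ with $q\in P(D)$, built as the freest such structure (the pushout of $A$ and $B$ over $C$ in the category of $L$-structures). The heart of the argument is to check that for every $p\in P(D)$ the induced $L_0$-structure $D_p$ lies in $K_0$. Here I would argue that $D_p$ decomposes as a free amalgam followed by a generic extension: the substructure generated under $\overline{f}(p,\cdot)$ by $O(A)\cup O(B)$ is $A_p\oplus_{C_p}B_p$ when $p\in P(C)$ (and, when $p\in P(A)\setminus P(C)$, it is $A_p$ freely amalgamated with the objects of $B$ made generic over $C_p=(A_p)\vert_{O(C)}$, symmetrically for $P(B)$, and fully generic for a parameter of $P(D)\setminus(P(A)\cup P(B))$), while every object produced using a parameter $q\neq p$ is, by freeness, generic over this amalgam in the $L_0$-structure $D_p$. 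Since free amalgams stay in $K_0$ (Condition $3\ $) and generic extensions of $K_0$-structures stay in $K_0$ (Conditions $3\ $ and $4\ $, as recorded after \cref{hypchap2}), we get $D_p\in K_0$, hence $D\in K$.

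Disjointness ($i_A(A)\cap i_B(B)=i_A(C)$) and the fact that $D$ is generated by $A$ and $B$ then follow from the corresponding disjointness on the parameter sort in $K_P$ and on each object structure in $K_0$. For the universal property, given morphisms $\varphi_A\colon A\to D'$, $\varphi_B\colon B\to D'$ into $D'\in K$ agreeing on $C$, I would first obtain the unique parameter morphism $P(D)\to P(D')$ from the universal property of $P(A)\oplus_{P(C)}P(B)$ in $K_P$, and then extend it to objects by combining the universal property of the $K_0$-free amalgams (for the $p$-generated parts) with that of the generic extensions (for objects built from other parameters); uniqueness is forced because $D$ is generated by $A$ and $B$. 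I expect this bookkeeping — reconciling the parameter-wise $K_0$-amalgams into a single well-defined $L$-structure and checking the combined map is a morphism for every parameter simultaneously — to be the main obstacle, whereas each individual use of Conditions $3\ $ and $4\ $ is routine.

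Finally, for the Generic Element condition I would treat the two kinds of sort separately. For an object sort $V$, a single object generates no parameters, so no parameterized function applies and $\langle\{x\}\rangle$ is just $x$ with empty parameter part; it lies in $K$ and maps to any $A\in K$ and $a\in V(A)$ trivially. For the parameter sort, I would take $\langle\{x\}\rangle$ to have parameter reduct the generic parameter element of $K_P$ given by Condition $4\ $, equipped over each of its parameters $q$ with the minimal object structure $\langle\emptyset\rangle$ of $K_0$ generated by the images of the $L_0$-constants; given $A\in K$ and $a\in P(A)$, the generic parameter morphism of $K_P$ sending $x\mapsto a$ lifts to objects via the canonical morphisms $\langle\emptyset\rangle\to A_{q}$ in $K_0$, yielding the required morphism.
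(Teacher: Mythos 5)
Your outline has the same architecture as the paper's proof: Condition $1\ $ via \cref{aximatizparam}; $P(D)=P(A)\oplus_{P(E)}P(B)$ formed in $K_{P}$; each fibre $D_{p}$ decomposed as a $K_{0}$-free amalgam ($A_{p}\oplus_{E_{p}}B_{p}$, or $A_{p}$ alone, or $B_{p}$ alone, or the constant-generated structure, according to where $p$ lives) followed by generic extensions absorbing the elements created for other parameters; the universal property assembled parameter-wise; and the parameter-sort generic element built from the generic element of $K_{P}$ with constant-generated fibres. However, two of your steps do not hold up, and they sit exactly at the crux.

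First, $O(D)$ cannot be ``the pushout of $A$ and $B$ over $C$ in the category of $L$-structures.'' That ambient pushout is a term-algebra construction: its fibres satisfy only the equations lying in the congruence generated by the function graphs of $A$ and $B$, so in general $D_{p}\centernot\in K_{0}$. Concretely, for parameterized abelian groups with $p\in P(E)$, $a\in O(A)\setminus O(E)$ and $b\in O(B)\setminus O(E)$, the terms $\overline{+}(p,a,b)$ and $\overline{+}(p,b,a)$ are distinct in the ambient pushout, so $D_{p}$ is not even a group and $D\centernot\in K$. Condition $3\ $ only demands a universal property relative to cocones into members of $K$ --- a weaker demand, which is precisely what permits commutativity and the other $K_{0}$-axioms to hold in the fibres --- but the object with that relative property is exactly what must be constructed, so invoking a pushout is either false (ambient category) or circular (category $K$).

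Second, the ``bookkeeping'' you defer is the missing construction, not a routine verification. Your description of $D_{p}$ as the amalgam generically extended by ``every object produced using a parameter $q\neq p$'' is circular as a definition: a generic element added to $D_{q}$ must be absorbed generically into $D_{p}$, which produces new elements of $D_{p}$ that must in turn be absorbed into every other fibre, and so on; the fibres grow mutually recursively and no single pass closes the process. The paper resolves this with the double-indexed iteration $((D^{k}_{p_{i}})_{i<\omega})_{k<\omega}$: enumerate $P(D)$, at each stage extend every fibre generically (Conditions $3\ $ and $4\ $ in $K_{0}$) by the elements created at the previous stage for the other parameters, and take unions along the resulting chains. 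This is also where Condition $1\ $ is genuinely used --- it gives closure of $K_{0}$ under unions of countable chains, so that $D_{p}=\bigcup_{k<\omega}D^{k}_{p}\in K_{0}$ --- a use your proposal omits entirely, since you invoke Condition $1\ $ only through \cref{aximatizparam}. A smaller inaccuracy of the same flavour: for an object sort, $\langle\lbrace x\rbrace\rangle$ has empty parameter part only when there are no constants; in general its parameter part is the $L_{P}$-structure generated by the constants (which must itself lie in $K_{P}$), with fibres the constant-generated $K_{0}$-structures generically extended by $x$ and by each other's new elements, via the same iteration.
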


\begin{proof}
Condition $1\ $ follows from \cref{aximatizparam}. This condition is required here because we need $K_{0}$ to be closed under unions of countable chains. For Condition $3\ $ consider some structures $E\subseteq A,B\in K$. We construct an $L$-structure in $K$ satisfying the Free Amalgamation property. Let us write $E= (O(E),P(E))$, $A=(O_{A},P_{A})$ and $B=(O_{B},P_{B})$. We begin by forming the free amalgam $P(D):=P(A)\oplus_{P(E)}P(B)\in K_{P}$ which will be the parameter sort of the free amalgam in $K$.
\vspace{10pt}

Let $(p_{i})_{i < \omega}$ enumerate $P(D)$. We define a sequence $((D^{k}_{p_{i}})_{i<\omega})_{k <\omega} $ of tuples of countable $L_{0}$-structures indexed by $P(D)$ by induction such that the sequence $(D^{k}_{p})_{k<\omega}$ is a chain of $L_{0}$-structures for any $p\in P(D)$ and that for any $k<\omega$ the sequence $(D^{k}_{p_{i}})_{i<\omega}$ is an increasing sequence of sets. Let $D^{0}_{p_{-1}}:= A\dot\cup (B\setminus A)$ and let $E_{0}$ be the structure generated by the constants inside of $K_{0}$.
\vspace{10pt}

Using Conditions $3\ $ and $4\ $ in $K_{0}$ we define:\begin{align*}
D^{0}_{p_{i}}:= &(A_{p_{i}}\oplus_{E_{p_{i}}} B_{p_{i}})\oplus_{\langle \emptyset \rangle}\langle \lbrace D^{0}_{p_{i-1}}\setminus (D^{0}_{-1})\rbrace \rangle\ \text{if}\ p_{i}\in P(E),\\
D^{0}_{p_{i}}:= &A_{p_{i}}\oplus_{\langle \emptyset \rangle} \langle\lbrace D^{0}_{p_{i-1}}\setminus A\rbrace\rangle\ \text{if}\ p_{i}\in P(A)\setminus P(E),\\
D^{0}_{p_{i}}:= &B_{p_{i}}\oplus_{\langle \emptyset \rangle} \langle \lbrace D^{0}_{p_{i-1}}\setminus B\rbrace \rangle\ \text{if}\ p_{i}\in P(B)\setminus P(E),\\
D^{0}_{p_{i}}:= &E_{0}\oplus_{\langle \emptyset \rangle} \langle \lbrace D^{0}_{p_{i-1}}\rbrace\rangle \ \text{if} \ p_{i}\in P(D)\setminus (P(A)\cup P(B)).
\end{align*}

\vspace{10pt}

Set $D^{0}=\bigcup\limits_{i <\omega} D^{0}_{p_{i}}$. If $D^{k}$ is defined for some $1\leq k<\omega$ we set:

$$D^{k+1}_{p_{0}}= D^{k}_{p_{0}}\oplus_{\langle \emptyset \rangle}\lbrace  D^{k}\setminus D^{k}_{p_{0}} \rbrace$$.

Similarly if $D^{k}_{p_{i}}$ is defined for some $1\leq i <\omega$ we set:

$$D^{k+1}_{p_{i+1}}=D^{k}_{p_{i+1}}\oplus_{\langle \emptyset \rangle}\langle\lbrace  D^{k+1}_{i}\setminus D^{k}_{p_{i}} \rbrace \rangle.$$

Every $D^{k}_{p}$ is an $L_{0}$-structure for the parameter $p$. We set $O(D)=\bigcup\limits_{k<\omega} D^{k}$, this set is countable and has an $L_{0}$-structure for the parameter $p\in P(D)$ given by $D_{p}=\bigcup\limits_{k<\omega}D^{k}_{p}\in K_{0}$, so $D\in K$. This defines an $L$-structure $D=(O(D),P(D)) \in K$ which extends $A$ and $B$, and such that $A\cap B=E$ inside of $D$. Also if $A$ and $B$ are generated by $(\overline{a},P(A))$ and $(\overline{b},P(B))$ respectively $(\overline{ab},P(A),P(B))$ generates $D$.
\vspace{10pt}

Now that we constructed the structure $D$ we show that is satisfies the amalgamation property. Consider $D'\in K$, $\varphi_{A} : A \rightarrow D'$ and $\varphi_{B} : B \rightarrow D'$ as in the statement, and write $\varphi_{E}$ their restriction to $E$. The uniqueness of $\varphi$ is clear from the fact that $D$ is generated by the images of $A$ and $B$. For existence we define $\varphi$ on the parameters: By free amalgamation in $K_{P}$ there is a unique morphism $\varphi_{P}$ of $L_{P}$-structures from $P(D)$ to $P(D')$ such that the diagram commutes. We then construct an increasing sequence of tuples of morphism of $L_{0}$-structures indexed on $P(D)$: $$((\varphi^{k}_{p_{i}} : D^{k}_{p_{i}} \rightarrow D'_{\varphi(p_{i})})_{i<\omega})_{k<\omega}.$$

\vspace{10pt}
For $k=0$, if $p_{i}\in P(E)$ we take the unique morphism $\varphi^{0}_{p_{i}} : D^{0}_{p_{i}} \rightarrow D'$ that extends $\varphi_{A}(p,\mathunderscore) : A_{p} \rightarrow D'_{\varphi_{P}(p_{i})}$ and $\varphi_{B}(p,\mathunderscore) : B_{p} \rightarrow D'_{\varphi_{P}(p_{i})}$ and sends any element $d\in D^{0}_{p_{i-1}}\setminus D^{0}_{p_{-1}}$ to $\varphi^{0}_{p_{i-1}}(d)$. 

\vspace{10pt}
If $p_{i}\in P(A)\setminus P(E)$ we take the unique morphism $\varphi^{0}_{p_{i}} : D^{0}_{p_{i}} \rightarrow D'_{\varphi_{P}(p_{i})}$ that extends $\varphi_{A}(p,\mathunderscore) : A_{p} \rightarrow D'_{\varphi_{P}(p_{i})}$ and sends any element $d\in D^{0}_{p_{i-1}}\setminus A$ to $\varphi^{0}_{p_{i-1}}(d)$.

\vspace{10pt}
Similarly if $p_{i}\in P(B)\setminus P(E)$ we take the unique morphism $\varphi^{0}_{p_{i}} : D^{0}_{p_{i}} \rightarrow D'_{\varphi_{P}(p_{i})}$ that extends $\varphi_{B}(p,\mathunderscore) : B_{p_{i}} \rightarrow D'_{\varphi_{P}(p_{i})}$ and sends any element $d\in D^{0}_{p_{i-1}}\setminus B$ to $\varphi^{0}_{p_{i-1}}(d)$, and finally if $p_{i}\in P(D)\setminus(P(A)\cup P(B))$ we take the unique morphism $\varphi^{0}_{p_{i}} : D^{0}_{p_{i}} \rightarrow D'_{\varphi_{P}(p_{i})}$ that sends any element $d\in D^{0}_{p_{i-1}}$ to $\varphi^{0}_{p_{i-1}}(d)$.

\vspace{10pt}

Assume that for every $i<\omega$ the functions $\varphi^{k}_{p_{i}} : D^{k}_{p_{i}} \rightarrow D'_{\varphi(p_{i})}$ have been defined. Define $\varphi^{k+1}_{p_{0}}: D^{k+1}_{p_{0}} \rightarrow D'_{\varphi(p_{0})} $ as the unique morphism extending $\varphi^{k}_{p_{0}} : D^{k}_{p_{0}} \rightarrow D'_{\varphi(p_{0})} $ by sending $x\in D^{k}_{p_{i}}$ to $\varphi^{k}_{p_{i}}(x)$ for every $i<\omega$. We just repeat the same steps for the other parameters. Once all these functions are constructed we consider $\varphi = \bigcup\limits_{k<\omega}\varphi^{k}_{p}$ for every $p\in P(D)$. This function defines a map in $K$ from $D$ to $D'$ such that its restrictions to $A$ and $B$ are $\varphi_{A}$ and $\varphi_{B}$ respectively, and it is a morphism of $L$-structures since $\varphi = \bigcup \limits_{k<\omega} \varphi^{k}_{p}: D_{p} \rightarrow D'_{\varphi(p)}$ is a morphism of $L_{0}$-structures for every parameter $p\in P(D)$ and $\varphi_{P} : P(D)\rightarrow P(D')$ is a morphism of $L_{P}$-structures.
\vspace{10pt}

Checking Condition $4\ $ for some object sort in $K_{P}$ is immediate, since the structure formed by a single element in this sort will suffice. For the case of the parameter sort we consider the structure $\langle \lbrace p \rbrace \rangle \in K_{P}$ obtained by applying Condition $4\ $ in $K_{P}$. To the elements $q\in \langle \lbrace p \rbrace \rangle$ we associate disjoint copies of the $L_{0}$-structure generated by the constants and we amalgamate these copies freely, as in the previous proof, to get the $L$-structure $\langle \lbrace p \rbrace \rangle \in K$.\end{proof}

\begin{remark} In the proof of \cref{freeamalgP} we ahve shown that $K$ seen as a category has pushouts for pairs of monomorphisms. This proof also implies that $D$ does not depend (up to isomorphism) on the choice of the enumeration of the parameters: given the same construction for a different enumeration $D^{*}=\bigcup\limits_{i<\omega}D^{*,k}_{p_{i}}$, by the same proof $D^{*}$ also has the universal property so $D$ and $D^{*}$ are isomorphic over $A$ and $B$.
\end{remark}

\begin{remark} A substructure of a finitely generated structure is not necessarily finitely generated. If we consider the examples of parameterized infinite dimensional vector spaces over a finite field with $L_{P}$ being the empty language, the structure $E= \langle v, p_{1}p_{2}\rangle$ freely generated by two parameters and one object is infinite however the substructure $(O(E),p_{1})$ where we removed one parameter is not finitely generated.\end{remark}

\begin{lemma}\label{cond4} If $K_{0}$ and $K_{P}$ satisfies Conditions $1\ $, $2\ $, $3\ $ and $4\ $ then $K$ also does. For every quantifier-free formula $\varphi(\overline{x},\overline{a})$ satisfied in some structure of $K$ there is a quantifier-free formula $\psi (\overline{x})$ such that if $\overline{a}\in A\in K$ and $A\models \psi (\overline{a})$ then there is $A\subseteq B \in K$ an extension and $\overline{b}\in B$ such that $B\models\varphi(\overline{a},\overline{b})$.
\end{lemma}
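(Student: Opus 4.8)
The plan is to note first that Conditions $1$, $3$ and $4$ for $K$ are already established in \cref{aximatizparam} and \cref{freeamalgP}, so the only new content is Condition $2$, which is precisely the displayed statement. I would verify it by reducing a quantifier-free $L$-formula to single-parameter pieces, handling the parameter sort with Condition $2$ of $K_{P}$ and each object-fibre with Condition $2$ of $K_{0}$, and then gluing the resulting extensions by the fibrewise free amalgamation produced in \cref{freeamalgP}.

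First the reductions. A quantifier-free formula is a disjunction of conjunctions of literals, and the witnessing formula $\psi$ may itself be taken to be the disjunction of the witnesses of the individual disjuncts that are realised in the given structure; so it suffices to treat the case where $\varphi$ is a single conjunction of literals. Next, because all parameters act on the common object sort, a single object term may invoke several parameters. I would \emph{flatten} every object subterm by introducing an auxiliary object element for its value; this is legitimate since these auxiliaries may be absorbed into the found tuple $\overline{b}$ and into the extension $B$ (the auxiliaries coming from subterms of the kept tuple already lie in $\langle \overline{a}\rangle\subseteq A$). After flattening, every literal is one of: an $L_{P}$-literal among parameter terms; an equation $w=f_{q}(w_{1},\dots,w_{k})$ or $w=c_{q}$, or a relation $R_{q}(\overline{w})$ possibly negated, each carrying a single parameter $q$; or a pure equality or inequality between object elements.

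Now I would split the work by sort and by fibre. The parameter-literals form a quantifier-free $L_{P}$-formula realised by the new parameters over the kept ones; Condition $2$ in $K_{P}$ gives a quantifier-free $L_{P}$-trace $\psi_{P}$ together with, whenever it holds, an $L_{P}$-extension of the parameter sort realising the new parameters. For each kept parameter $q$, the object-literals tagged by $q$ form an $L_{0}$-formula about the induced structure $A_{q}$; Condition $2$ in $K_{0}$ gives a quantifier-free $L_{0}$-trace $\psi_{q}$ in the kept object variables and, whenever it holds, an $L_{0}$-extension of $A_{q}$ realising those literals. For each \emph{new} parameter the corresponding fibre is generic, so by the Generic Element condition and free amalgamation it can be extended freely to realise its literals, subject only to the constant data, and these impose no extra condition on $\overline{a}$. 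I would then set $\psi(\overline{a})$ to be the conjunction of $\psi_{P}$, the traces $(\psi_{q})_{q}$ interpreted at the kept parameters $q$, and the literals of $\varphi$ involving only kept elements; this is a quantifier-free $L$-formula in $\overline{a}$, and $D\models\psi(\overline{a})$ follows at once from the forward halves of the three component instances of Condition $2$.

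For the backward direction, given $A'\models\psi(\overline{a}')$ I would first extend the parameter sort via Condition $2$ in $K_{P}$ to place the new parameters, then extend each kept fibre via Condition $2$ in $K_{0}$, realise each new fibre generically, and finally glue all these object-extensions into a single $L$-structure $B'\supseteq A'$ in $K$ by the fibrewise free amalgam of \cref{freeamalgP}, amalgamating over the shared objects (including the auxiliary objects forced to be common to two fibres by a cross-fibre equality). The main obstacle is exactly this gluing: since the object universe is shared by all fibres, realisation is \emph{not} fibrewise independent — an asserted equality between two object terms living in different fibres forces one object to carry prescribed values in several induced $L_{0}$-structures simultaneously, while asserted inequalities must be kept apart. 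Flattening isolates each such constraint into a single shared object element, and free amalgamation identifies only the declared shared elements while keeping distinct base and generic elements distinct, so the fibrewise realisations assemble consistently. The technical heart of the argument is then checking that the assembled structure lies in $K$ and satisfies every literal of $\varphi$, which reduces, fibre by fibre, to the corresponding realisations already secured in $K_{0}$ and $K_{P}$.
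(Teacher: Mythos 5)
Your proposal is correct and takes essentially the same route as the paper: Conditions $1$, $3$ and $4$ are quoted from \cref{aximatizparam} and \cref{freeamalgP}, and for Condition $2$ the paper likewise flattens all sub-terms with auxiliary variables (the tuples $\overline{z},\overline{k}$, with those lying in $\langle\overline{a}\overline{p}\rangle$ absorbed into the kept tuple) so that each object literal carries a single parameter, decomposes $\varphi$ as $\varphi_{P}\wedge\bigwedge_{\nu}\varphi_{\nu}$, applies Condition $2$ of $K_{P}$ and of $K_{0}$ fibrewise, takes $\psi$ to be the conjunction of the traces at the retained parameter variables, and glues the fibrewise extensions by the free amalgamation construction of \cref{freeamalgP}. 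Your explicit handling of the new-parameter fibres via the Generic Element condition and of cross-fibre equalities is just a slightly more detailed rendering of the paper's ``choose an extension for every parameter variable and amalgamate these extensions as in \cref{freeamalgP}''.
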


\begin{proof} Conditions $1\ $, $3\ $ and $4\ $ follow from \cref{freeamalgP}, we need them here because we want to amalgamate structures in $K_{0}$ and as previously we need $K_{0}$ to be closed under countable unions. For Condition $2\ $ consider a quantifier free formula $\varphi(\overline{x}\overline{v},\overline{y}\overline{w})$ satisfied by some $\overline{a}\overline{p}\overline{b}\overline{q}\in A \in K$  with $\overline{x},\overline{y}$ in the object sorts and $\overline{v},\overline{w}$ in the parameters.

\vspace{10pt}
We can assume that $\varphi(\overline{x}\overline{v},\overline{y}\overline{w})$ is of the form:

$$\bigwedge_{i,j} E_{i}(f_{i},g_{i}) \wedge \bigwedge_{i} \overline{R}_{i}(v_{i},n_{i},m_{i}) \wedge \bigwedge_{i} \overline{R}'_{i}(w_{i},n'_{i},m'_{i}) \wedge \bigwedge_{i} \overline{R}'_{i}(w_{i}',n'_{i},m'_{i}),$$ where $R_{i},R_{i}'$ are relations symbol or negated relations symbols from $L_{0}$, $n_{i},n'_{i},m_{i},m'_{i}$ are terms in $\overline{x}\overline{v},\overline{y}\overline{w}$, $E_{i}$ are relations symbols or negated relation symbols form $L_{P}$ and $w_{i}',f_{i},g_{i}$ are terms in $\overline{v},\overline{w}$.

\vspace{10pt}

We consider a strengthening of $\varphi$ satisfied by $\overline{a}\overline{p}\overline{b}\overline{q}$. Let $(h_{i})_{i<m}$ enumerate the interpretation of all the sub-terms of the variable terms appearing in $\varphi$ when we fix $\overline{v}\overline{w}$ to be $\overline{p}\overline{q}$. Let $(c_{i})_{i<n}$ enumerate the interpretation of all the sub-terms of the object terms appearing in $\varphi$ when we fix $\overline{x}\overline{v}\overline{y}\overline{w}$ to be $\overline{a}\overline{p}\overline{b}\overline{q}$. Let $\overline{k}=(k_{i})_{i<m}$ and $\overline{z}=(z_{i})_{i<n}$ be tuples of variable of the same length. Enlarging $\overline{a}\overline{p}$ we assume that it contains every term $c_{i}$ and $h_{i}$ such that $c_{i}\in \langle \overline{a}\overline{p}\rangle$ and $h_{i}\in \langle\overline{p}\rangle$, let $\overline{x}\overline{v}$ be the corresponding tuples of variables.

\vspace{10pt}
We replace the tuple $\overline{b}\overline{q}$ by $\overline{b}\overline{c}\overline{q}\overline{h}$ and write $\varphi = \varphi(\overline{x}\overline{v},\overline{y}\overline{z}\overline{w}\overline{k})$. We add to $\varphi$ the formulas that expresses the fact that $k_{i}$ is a sub-term, meaning that if $h_{i}=t_{i}(\overline{p},\overline{q},h_{<i})$ for $t_{i}$ an $L_{P}$-term we add $k_{i}=t_{i}(\overline{x},\overline{y},k_{<i})$ to $\varphi$ and replace the occurrences of $t_{i}(\overline{x},\overline{y},k_{<i})$ by occurrences of $k_{i}$.

\vspace{10pt}
Similarly we add to $\varphi$ the formulas that expresses the fact that $z_{i}$ is a sub-term, meaning that if $c_{i}=\overline{t}_{i}(\rho,\overline{a},\overline{b},c_{<i})$ for $\overline{t}_{i}$ an $L_{0}$-term and $\rho\in \overline{p}\overline{b}\overline{h}$ we add $z_{i}=\overline{t}_{i}(\nu,\overline{x},\overline{y},z_{<i})$ to $\varphi$ where $\nu \in \overline{v}\overline{w}\overline{k}$ is the variable corresponding to $\rho$ and replace the occurrences of $\overline{t}_{i}(\nu,\overline{x},\overline{y},z_{<i})$ by occurrences of $z_{i}$. After this the object terms that appears in $\varphi$ only use one parameter.

\vspace{10pt}
For every parameter variable $\nu \in \overline{v}\overline{w}\overline{k}$ consider the $L_{0}$-formula $\varphi_{\nu}(\overline{x},\overline{y}\overline{z})$ that specifies every equality $z_{j}=\overline{t}_{j}(\nu,\overline{x},\overline{y},z_{<j})$ (we consider the previous equalities and only mention those where the parameter $\nu$ is used), the relations $R_{j,\nu}$ that hold between the elements of $\overline{a}\overline{b}\overline{c}$ (so here we look at the $L_{0}$-structure induced by $\nu$), and we include the instances of equality and inequality among these relations.

\vspace{10pt}
We consider the formula $\varphi_{P}$ that specifies every equality $k_{i}=t_{i}(\overline{x},\overline{y},k_{<i})$ (we consider the previous equalities and only mention those in the parameter sort), the relations $E_{j}$ that holds between the elements of $\overline{p}\overline{h}$, and we include the instances of equality and inequality among these relations. We can write the formula $\varphi$ as:

$$\varphi(\overline{x}\overline{v},\overline{y}\overline{z}\overline{w}\overline{k})= \varphi_{P}(\overline{v},\overline{w}\overline{k})\wedge \bigwedge\limits_{\nu\in \overline{v}\overline{w}\overline{k}}\varphi_{\nu}(\overline{x},\overline{y}\overline{z}).$$

We can consider $\psi_{P}$ which is the $L_{P}$-formula associated to $\varphi_{P}$ by $2$ in $K_{P}$ and $\psi_{\nu}$ the $L_{0}$-formula associated to $\varphi_{\nu}$ by Condition $2$ in $K_{0}$ for every $\nu \in \overline{v}\overline{w}\overline{k}$. Let $\psi(\overline{x},\overline{v}):= \psi_{P}(\overline{v}) \wedge \bigwedge_{\nu\in \overline{v}}\varphi_{\nu}(\overline{x})$. It satisfies the condition, and we conclude as in \cref{cond4}: If $\psi(\overline{x},\overline{v})$ is satisfied by $\overline{a}'\overline{p}'\in A'\in K$ we can extend $P(A)$ in $K_{P}$ thanks to $\psi_{P}$, choose an extension for every parameter variable $\overline{v}\overline{w}\overline{k}$, and amalgamate these extension in an $L$-structure extending $A$ as in \cref{freeamalgP}.\end{proof}

\subsection{Amalgamation and Combinatorial Properties}

In this subsection we assume that $K_{0}$ and $K_{P}$ both satisfy $(H)$. By the previous section we can construct the limit theory $T_{0}$ of $K_{0}$ and $\mathbb{M}_{0}$ a monster model of it. $T_{0}$ has quantifier elimination and Kim-independence and algebraic independence coincide inside of it, similarly we can construct the limit theory $T_{P}$ of $K_{P}$ and a monster model $\mathbb{M}_{P}$.

\vspace{10pt}
By \cref{clotalg} the algebraic closure and the generated structure coincide in $\mathbb{M}_{0}$ we can represent non-Kim-forking extensions as commutative squares as in the following figure:

\begin{figure}[hbtp]
    \centering
    
\[\begin{tikzcd}
	A && D \\
	\\
	E && B
	\arrow["i", from=3-1, to=3-3]
	\arrow["g"', from=3-3, to=1-3]
	\arrow["j"', from=3-1, to=1-1]
	\arrow["f", from=1-1, to=1-3]
\end{tikzcd}\]

\caption{A commutative square}

\end{figure}

The arrows are embeddings, the structure $D$ is generated by the images $f(A)$ and $g(B)$, and $f(A)\cap g(B)=f\circ j(E)=g\circ i(E)$. Then the isomorphism type of $f(A)$ over $g(B)$ gives us a type over $g(B)$ which corresponds to a non-Kim-forking extension of $\tp(A/E)$. We will refer to such squares as strong amalgams. However for the (Algebraically Independent 3-Amalgamation) the squares we are considering consist in some `extensions' of such squares, meaning that we compose with an embedding from $D$ to some $D'\in K_{0}$.

\begin{prop}\label{amalgamationaP}
    $K$ satisfies Algebraically Independent 3-Amalgamation.
\end{prop}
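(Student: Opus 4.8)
The plan is to carry out the three-amalgamation sort by sort, mirroring the inductive construction used for Free Amalgamation in \cref{freeamalgP}: first amalgamate the parameter sorts using Condition $5$ in $K_{P}$, and then, parameter by parameter, build the induced $L_{0}$-structures using Condition $5$ (together with Conditions $3$ and $4$) in $K_{0}$.

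First I would apply Algebraically Independent 3-Amalgamation in $K_{P}$ to the diagram of parameter sorts $P(E)\subseteq P(A),P(B_{0}),P(B_{1})$ together with $P(D_{0}),P(D_{1}),P(B)$. The disjointness hypotheses on the object-language diagram restrict to exactly the disjointness hypotheses needed on the parameter sorts, namely $A\forkindep^{a,D_{i}}_{E}B_{i}$ and $B_{0}\forkindep^{a,B}_{E}B_{1}$ read off at the level of $P$. This produces $P(D)\in K_{P}$ with embeddings realizing $P(D_{i})\forkindep^{a,P(D)}_{P(B_{i})}P(B)$ for $i=0,1$ and $P(D_{0})\forkindep^{a,P(D)}_{P(A)}P(D_{1})$; this $P(D)$ is the parameter sort of the sought 3-amalgam $D$.

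Next I would construct the object set $O(D)$ and the family of induced $L_{0}$-structures $(D_{p})_{p\in P(D)}$ by the same countable iteration as in \cref{freeamalgP}. Enumerate $P(D)$ and, at each stage, define $D_{p}$ according to the region in which $p$ lies: if $p$ is (the image of) a parameter of $E$, let $D_{p}$ be the 3-amalgam in $K_{0}$ of $(D_{0})_{p},(D_{1})_{p},B_{p}$ over $A_{p},(B_{0})_{p},(B_{1})_{p},E_{p}$ given by Condition $5$ in $K_{0}$; if $p$ comes from $A\setminus E$, amalgamate $(D_{0})_{p}$ and $(D_{1})_{p}$ freely over $A_{p}$ (the degenerate instance of Condition $5$ with empty $B$-side); if $p$ comes from $B_{i}\setminus E$, amalgamate $(D_{i})_{p}$ and $B_{p}$ freely over $(B_{i})_{p}$; and if $p$ lies in $D_{i}$, in $B$, or in $P(D)$ alone, take $(D_{i})_{p}$, $B_{p}$, or the structure generated by the constants, respectively. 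As in \cref{freeamalgP}, objects attached to one parameter must be adjoined as generic elements (Condition $4$ in $K_{0}$) to the fibers of the other parameters, and one closes $O(D)$ under all function symbols by iterating the construction countably many times and taking the union; since $K_{0}$ is closed under unions of countable chains, each $D_{p}\in K_{0}$, so $D\in K$.

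Finally I would verify the two conclusions of the right-hand diagram. Commutativity and the fact that $\varphi_{D_{i}},\varphi_{B}$ are embeddings follow verbatim as in \cref{freeamalgP}. For the disjointness, any object in $\varphi_{D_{i}}(D_{i})\cap\varphi_{B}(B)$ is controlled by a parameter in $P(D_{i})\cap P(B)$, which by the parameter-level independence of the first step lies in $P(B_{i})$; the fiberwise independence built into each $D_{p}$ by Condition $5$ in $K_{0}$ then forces the object into $\varphi_{D_{i}}\circ\varphi_{B_{i}}(B_{i})$, yielding $D_{i}\forkindep^{a,D}_{B_{i}}B$, and symmetrically $D_{0}\forkindep^{a,D}_{A}D_{1}$. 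The main obstacle is precisely this last bookkeeping: one must track which parameter owns each object through the iterated generic amalgamation and show that the object-level intersections are governed jointly by the $K_{P}$-independence of the parameters and the $K_{0}$-independence within each fiber. This matching of the two layers is the technical heart of the argument, exactly as it was in the proof of \cref{freeamalgP}.
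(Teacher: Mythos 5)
Your proposal follows the paper's proof essentially step for step: Condition $5$ in $K_{P}$ produces the parameter sort $P(D)$, the fibers are then built by the same case division (Condition $5$ in $K_{0}$ for parameters in $P(E)$, free amalgams of $D_{0,p}$ and $D_{1,p}$ over $A_{p}$ or of $D_{i,p}$ and $B_{p}$ over $B_{i,p}$ for parameters in $P(A)\setminus P(E)$ or $P(B_{i})\setminus P(E)$, and generic extensions via Condition $4$ elsewhere), and everything is glued by the same countable double iteration used in \cref{freeamalgP}. Your closing verification that the natural maps are embeddings with the prescribed intersections is sketched at roughly the same level of detail as the paper itself, which likewise leaves that bookkeeping to the construction.
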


\begin{proof}
Consider the following diagram in $K$, which is as in the definition of Algebraically Independent 3-Amalgamation and where for simplicity we take the embeddings to be inclusions. We want to complete it.

\begin{figure}[hbtp]
    \centering

\[\begin{tikzcd}
	& {D_{1}} &&&& {D_{1,p_{j}}} && D^{*,0}_{p_{j}} \\
	A && {D_{0}} && A_{p_{j}} && {D_{0,p_{j}}} \\
	& {B_{1}} && B && {B_{1,p_{j}}} && B_{p_{j}} \\
	E && {B_{0}} && E_{p_{j}} && {B_{0,p_{j}}}
	\arrow["{i_{B_{0}}}"', from=4-1, to=4-3]
	\arrow["{\psi_{B_{0}}}"', from=4-3, to=3-4]
	\arrow["{i_{B_{1}}}"', from=4-1, to=3-2]
	\arrow[from=4-3, to=2-3]
	\arrow["{i_{A}}"', from=4-1, to=2-1]
	\arrow["{\varphi_{A_{0}}}"'{pos=0.2}, from=2-1, to=2-3]
	\arrow["{\varphi_{B_{0}}}"'{pos=0.2}, from=3-2, to=1-2]
	\arrow["{\varphi_{A_{1}}}"'{pos=0.7}, from=2-1, to=1-2]
	\arrow["{\psi_{B_{1}}}"'{pos=0.3}, from=3-2, to=3-4]
	\arrow[from=4-5, to=4-7]
	\arrow[from=4-5, to=2-5]
	\arrow[from=2-5, to=2-7]
	\arrow[from=4-7, to=2-7]
	\arrow[from=4-7, to=3-8]
	\arrow[from=4-5, to=3-6]
	\arrow[from=2-5, to=1-6]
	\arrow["{\varphi_{D_{0}}}"', from=2-7, to=1-8]
	\arrow["{\varphi_{B}}"', from=3-8, to=1-8]
	\arrow[from=3-6, to=3-8]
	\arrow[from=3-6, to=1-6]
	\arrow["{\varphi_{D_{1}}}"', from=1-6, to=1-8]
\end{tikzcd}\]

\caption{Proving Algebraically Independent 3-Amalgamation in $T_{P}$}

\end{figure}
\vspace{10pt}
Firstly by Condition $5\ $ in $K_{P}$ we can complete the parameter part of this diagram with an $L_{P}$-structure $P(D)\in K_{P}$ which is going to be the parameter sort of our structure $D$. After this we begin by choosing a completion for every parameter. Fix an enumeration $(p_{i})_{i<\omega}$ of $P(D)$. Looking at the $L_{0}$-structure indexed by any of the parameters $p_{j}\in P(E)$ we can find an $L_{0}$-structure $D^{*,0}_{p_{j}}$ as in the right diagram. We begin by filling the cube for the $L_{0}$-structures induced by each of the parameters.

\vspace{10pt}
These sets $D^{*,0}_{p_{j}}$ for $p_{j}\in P(E)$ are also equipped with the $L$-structure on the sets $\varphi_{B}(B)$ and $\varphi_{D_{i}}(D_{i})$ for $i=0,1$ induced by the embeddings. We construct the $L_{0}$-structure $D^{*,0}_{p_{j}}$ for $j\in P(D)\setminus P(E)$ and consider the natural mappings from $D_{0}$, $D_{1}$ and $B$ to $D^{*,0}_{p_{j}}$.\begin{align*}
&\text{If}\  p_{j}\in P(A)\setminus P(E)\ \text{set}\ D^{*,0}_{p_{j}}=  [ D_{0,p_{j}}\oplus_{A_{p_{j}}}D_{1,p_{j}}]\oplus_{\langle \emptyset \rangle}\langle \lbrace B \setminus (B_{0}\cup B_{1})\rbrace \rangle\\
&\text{If}\  p_{j}\in P(B_{i})\setminus P(E)\ \text{set}\ D^{*,0}_{p_{j}}=  [ D_{0}\oplus_{B_{0}}B]\oplus_{\langle \emptyset \rangle}\langle \lbrace D_{1} \setminus (A\cup B_{1})\rbrace\rangle\\
&\text{If}\  p_{j}\in P(D_{0})\setminus (P(A)\cup P(B_{0}))\ \text{set}\ D^{*,0}_{p_{j}}=  D_{0,p_{j}}\oplus_{\langle \emptyset \rangle}\langle \lbrace (B \setminus B_{0})\cup (D_{1}\setminus A)\rbrace\rangle\\
&\text{If}\  p_{j}\in P(B)\setminus (P(B_{0})\cup P(B_{1}))\ \text{set}\ D^{*,0}_{p_{j}}=  B_{p_{j}}\oplus_{\langle \emptyset \rangle}\langle\lbrace (D_{0}\setminus B_{0}) \cup (D_{1}\setminus B_{1})\rbrace\rangle\\
&\text{If}\  p_{j}\in P(D)\setminus (P(D_{0})\cup P(D_{1})\cup P(B))\ \text{set}\ D^{*,0}_{p_{j}}=  E_{0}\oplus_{\langle \emptyset \rangle}\langle\lbrace (D_{0} \cup D_{1}\cup B)\rbrace \rangle
\end{align*}

In this last union we assimilate elements that have the same predecessors in the diagram. Now that we have all of our $L_{0}$-structures we will just put them together in a canonical way: we define an increasing sequence of tuples of $L_{0}$-structures $((D^{k}_{p_{j}})_{j<\omega})_{k<\omega}$.

\vspace{10pt}
Let $D^{0}_{p_{0}}=D^{*,0}_{p_{0}}$, if $D^{0}_{p_{j}}$ has been defined for some $j<\omega$ set:

$$D^{0}_{p_{j+1}}:=D^{*,0}_{p_{j+1}}\oplus_{\langle \emptyset \rangle}\langle \lbrace D^{0}_{p_{j}} \setminus (D_{0} \cup D_{1} \cup B \rangle) \rbrace\rangle,$$

where we assimilate $D_{0}$, $ D_{1}$ and $B$ with their image in $D^{0}_{p_{j}}$. Once $D^{0}_{p_{j}}$ has been defined for every $j<\omega$ we set $D^{0}:= \bigcup\limits_{i<\omega}D^{0}_{p_{j}}$. If $D^{k}$ is defined we set: 

$$D^{k+1}_{p_{0}}:= D^{k}_{p_{0}}\oplus_{\langle \emptyset \rangle}\langle \lbrace D^{k} \setminus D^{k}_{p_{0}}\rbrace \rangle.$$

Now assume that $D^{k+1}_{p_{j}}$ is defined for some $k<\omega$,$j<\omega$, we set:

$$D^{k+1}_{p_{j+1}}:= D^{k}_{p_{j+1}}\oplus_{\langle \emptyset \rangle}\langle \lbrace D^{k+1}_{p_{j}} \setminus D^{k}_{p_{j+1}}\rbrace \rangle.$$

Then take $D:=\bigcup\limits_{k<\omega} D^{k}_{p_{j}}$ with the $L_{0}$-structure induced for every $j< \omega$. This defines an $L$-structure. The natural mappings from $D_{0}$, $D_{1}$ and $B$ to $D$ are embeddings for the $L$-structure we put on $D$ and they satisfy the conditions.\end{proof}

Finally we can state that the class $K$ satisfies the conditions of $(H)$ by \cref{amalgamationaP}, \cref{aximatizparam} and \cref{cond4}. We can define the limit theory $T$ of $K$ and take $\mathbb{M}$ a monster model of $T$. By \cref{nsop1ouais} we know that $T$ is an NSOP$_1$ theory with existence such that $\forkindep^{K}=\forkindep^{a}$ over arbitrary sets.

\begin{prop}\label{qecriterP}
    $T$ implies the following sets of formulas for any $n<\omega$:
    \begin{enumerate}
        \item If $\overline{a}$ is an $m$-tuple of objects, $\pi_{i}(\overline{x})$ is an $m$-type in $T_{0}$ for every $i\leq n$ such that $\pi_{i} \vdash x_{j}\centernot= x_{j'}$ for $j\centernot=j'$ and $q(\overline{v})$ in a type in $T_{P}$, then there exists an infinite number of tuples of parameters $(p_{i})_{i\leq n}$ such that $\models \pi_{i}(\overline{a}_{p_{i}})$ for every $i\leq n$ and $\models q((p_{i})_{i\leq n})$.

        \item If $\overline{p}=p_{0},..,p_{n}$ is a $n$-tuple of parameters and $\pi_{i}(\overline{x},\overline{b}_{p_{i}})$ is a type in $\mathcal{M}_{p_{i}}$ over a $n$-tuple of objects $\overline{b}$ such that the induced type in any of the $x_{i}$ is non-algebraic and that $\pi_{i} \vdash x_{j}\centernot= x_{j'}$ for $j\centernot=j'$ then there is a tuple of objects $\overline{a}$ such that $\models \pi_{i}(\overline{a}_{p_{i}},\overline{b}_{p_{i}})$ for every $i\leq n$.
    \end{enumerate}
\end{prop}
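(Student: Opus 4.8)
The plan is to prove each item by producing a single witnessing structure $D\in K$ in which the required parameters (item 1) or objects (item 2) already occur, and then embedding $D$ into $\mathbb{M}$ over the relevant finitely generated base, using the $\aleph_1$-saturation of $\mathbb{M}$ (\cref{construcMP}) and the extension property established in \cref{nsop1ouais}. Since $K$ satisfies $(H)$ by \cref{amalgamationaP}, \cref{aximatizparam} and \cref{cond4}, the theory $T$ has quantifier elimination (\cref{qeparam}); together with compactness and the saturation of $\mathbb{M}$ this lets me treat $\pi_i$ and $q$ as finite quantifier-free types, the general statement following by realizing every finite sub-type. The structures $D$ will be built exactly as in the construction of \cref{freeamalgP}, that is, by prescribing the induced $L_0$-structure $D_p$ for each parameter $p$ and completing it generically via Condition $4$ in $K_0$, so that $D_p\in K_0$ for all $p$ and $D\in K$.

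\textbf{Item 1.} Here I would fix $\overline{a}$ and exploit that $\langle\overline{a}\rangle=acl(\overline{a})$ (\cref{clotalg}) carries no $L_0$-structure beyond that generated by the constants, because every symbol of $L_0$ acquires a parameter argument in $L$; thus pure objects place no constraint on how a fresh parameter may view them. For each $i\leq n$ I choose $A_i\in K_0$ whose objects contain $\overline{a}$ and in which $\overline{a}$ realizes $\pi_i$ (possible as $\pi_i$ is consistent with $T_0$, using quantifier elimination in $T_0$), and I choose $P_0\in K_P$ realizing $q$ at a tuple $(p_i)_{i\leq n}$. I then let $D\in K$ have parameter part $P_0$ and object part $\overline{a}$ together with disjoint copies of the remaining objects of each $A_i$, setting $D_{p_i}$ to extend $A_i$ and every other induced structure to be generic. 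Embedding $D$ into $\mathbb{M}$ over $\langle\overline{a}\rangle$ yields one suitable tuple. To obtain infinitely many, I repeat the construction with $P_0$ replaced by a free amalgam over the constants (Condition $3$ in $K_P$) of arbitrarily many copies of $P_0$, all sharing the single object tuple $\overline{a}$; the resulting parameter tuples are pairwise disjoint, so embedding over $\langle\overline{a}\rangle$ produces $N$ distinct tuples for every $N$.

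\textbf{Item 2.} Now the parameters $\overline{p}=p_0,\dots,p_n$ and the objects $\overline{b}$ are fixed; set $B:=\langle\overline{b},\overline{p}\rangle\in K$. For each $i$ I realize $\pi_i$ inside an extension $C_i\in K_0$ of the induced structure $B_{p_i}$, witnessed by a new tuple $\overline{a}$: the assumption that $\pi_i$ is non-algebraic in each variable forces each $a_j$ to be distinct from every object of $B_{p_i}$, and $\pi_i\vdash x_j\neq x_{j'}$ keeps the $a_j$ pairwise distinct. I then define $D\in K$ with parameter part $P(B)$ and object part $O(B)\cup\overline{a}$ together with disjoint copies of the extra objects of each $C_i$, setting $D_{p_i}$ to extend $C_i$ and completing every other induced structure generically; crucially the one tuple $\overline{a}$ lies in the common object set and is therefore seen by all the parameters at once. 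Since $B_{p}\subseteq D_{p}$ for every $p$, the inclusion $B\hookrightarrow D$ is an $L$-embedding, and embedding $D$ into $\mathbb{M}$ over $B$ sends $\overline{a}$ to the desired object tuple.

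\textbf{Main obstacle.} The delicate point is the consistency of the construction in item 2: the single tuple $\overline{a}$ must be viewed simultaneously by every $p_i$, so one must rule out that the relations imposed by $\pi_i$ through $p_i$ conflict with those imposed by $\pi_{i'}$ through $p_{i'}$ — for instance by forcing some $a_j$ to equal an element of $B_{p_i}$ (or a term over $\overline{b}$) through one parameter while another forbids it. This is exactly where non-algebraicity is used: it guarantees that $\overline{a}$ consists of genuinely new, pairwise distinct objects, so the per-parameter $L_0$-structures interact only through the free object set $\overline{a}$ and never collide, making $D$ a legitimate member of $K$. In item 1 the analogous consistency is automatic, since the shared datum is the structureless object tuple $\overline{a}$, on which a fresh parameter may impose any $L_0$-structure whatsoever.
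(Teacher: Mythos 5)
Your proposal is correct and takes essentially the same route as the paper: the paper's own (two-paragraph) proof likewise builds a single witnessing structure in $K$ — prescribing the $L_{0}$-structure induced by each parameter and completing the remaining induced structures generically, exactly the technique of \cref{freeamalgP} — and then embeds it into the limit over $\langle\overline{a}\rangle$, respectively over $B=\langle\overline{b},\overline{p}\rangle$, using $K$-$\aleph_{1}$-saturation from \cref{construcMP}. One small overstatement to note: in item 2, non-algebraicity of the induced types only forces $a_{j}\notin\langle\overline{b}\rangle_{p_{i}}=acl(\overline{b}_{p_{i}})$ rather than $a_{j}$ distinct from \emph{every} element of $B_{p_{i}}$ (which may contain objects generated via the other parameters), but since each coordinate's induced type is non-algebraic one may choose the realization of $\pi_{i}$ in a monster of $T_{0}$ so that its coordinates avoid the countable set $O(B)$, after which your amalgamation of the $C_{i}$ over the shared tuple $\overline{a}$ goes through as written.
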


\begin{proof} Consider the limit $\mathcal{M}$ of $K$ built in \cref{construcMP}, a finite tuple of objects $\overline{a}\in \mathcal{M}$ and a finite set of types $\pi_{i}(\overline{x})$ as in the first point of the statement. We can build a structure $A\in K$ containing $\overline{a}$ and with parameters $(p_{i})_{i\leq n}$ such that $\models \pi_{i}(\overline{a}_{p_{i}})$ for every $i\leq n$ and $\models q((p_{i})_{i\leq n})$. Then by saturation we can embed this structure $A$ inside of $\mathcal{M}$ over $\overline{a}$ and find the parameters.

\vspace{10pt}
The second point is similar: Consider the structure $B$ generated by $\overline{b},\overline{p}$ and types $\pi_{i}(\overline{x},\overline{b}_{p_{i}})$ as in the statement. We can extend $B$ into $B'\in K$ that contains a tuple of objects $\overline{a}'$ such that $\overline{a}'\overline{b}$ has the right type for the parameter $p_{i}$ for every $i$ and then embed $B'$ into $\mathcal{M}$ over $B$ to find the right objects.\end{proof}

\begin{definition} A formula $\varphi(x,y)$ has the \emph{independence property}, I.\ P.\ for short, if there is $(a_{i})_{i<\omega}$ and $(b_{I})_{I\subseteq \omega}$ two sequences of tuples such that $\models \varphi(a_{i},b_{I})$ if and only if $i\in I$. A theory $T$ has I.\ P.\ if some formula in it has I.\ P..
\end{definition}

\begin{cor}
The theory $T$ has I.\ P.\ if there are two distinct non-algebraic types in $T_{0}$. $T$ is non-simple if $T_{0}$ is non-trivial, in the sense that there are $a,b\in \mathbb{M}_{0}$ such that $\langle a,b\rangle \setminus (\langle a \rangle \cup \langle b \rangle) \centernot= \emptyset$.
\end{cor}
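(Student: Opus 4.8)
The statement splits into the independence property and non-simplicity, and in both halves the engine is the realization result \cref{qecriterP}. For I.P., the plan is to code subsets of $\omega$ by fibrewise type. Fix two distinct non-algebraic types $\pi_{0},\pi_{1}(\overline{x})$ of $T_{0}$, both implying $x_{j}\neq x_{j'}$ for $j\neq j'$ (the form required by the second point of \cref{qecriterP}). Distinctness gives an $L_{0}$-formula $\theta(\overline{x})$ with $\pi_{1}\vdash\theta$ and $\pi_{0}\vdash\neg\theta$; since both types fix the same all-distinct equality pattern, $\theta$ genuinely involves the $L_{0}$-structure, so its parameterization $\overline{\theta}(v,\overline{x})$ really depends on $v$. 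Fix distinct parameters $(p_{i})_{i<\omega}$ in $\mathbb{M}$. For each $I\subseteq\omega$ I apply the second point of \cref{qecriterP} to $p_{0},\dots,p_{n}$ with $\pi_{i}:=\pi_{1}$ for $i\in I$ and $\pi_{i}:=\pi_{0}$ for $i\notin I$ (and $\overline{b}=\emptyset$) to realize the prescribed type at each fibre $p_{i}$, $i\le n$, and then take a compactness limit over $n$ to get a single tuple $\overline{a}_{I}$ realizing $\pi_{1}$ in $\mathbb{M}_{p_{i}}$ when $i\in I$ and $\pi_{0}$ otherwise. Setting $\varphi(v,\overline{x}):=\overline{\theta}(v,\overline{x})$, $a_{i}:=p_{i}$ and $b_{I}:=\overline{a}_{I}$ then yields $\models\varphi(a_{i},b_{I})$ iff $\overline{a}_{I}$ realizes $\pi_{1}$ at $p_{i}$ iff $i\in I$, so $\varphi$ has I.P.

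For non-simplicity the plan is to show that $\forkindep^{a}$ fails base monotonicity. By \cref{nsop1ouais} and \cref{forkindep} we have $\forkindep^{K}=\forkindep^{a}$ and $\forkindep^{f}=\forkindep^{M}$, and $\forkindep^{M}=\forkindep^{a}$ holds exactly when $\forkindep^{a}$ is base monotone. If $T$ were simple then Kim-forking would coincide with forking (Morley sequences witness all dividing), i.e.\ $\forkindep^{K}=\forkindep^{f}$, forcing $\forkindep^{a}=\forkindep^{M}$ and hence base monotonicity of $\forkindep^{a}$. So a single base monotonicity failure proves $T$ non-simple, and being NSOP$_{1}$ it is then strictly NSOP$_{1}$.

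To exhibit the failure I use non-triviality. Let $t$ be the term and $a,b$ objects, chosen generically inside one fibre, with $c:=t_{q}(a,b)\in\langle a,b\rangle_{q}\setminus(\langle a\rangle_{q}\cup\langle b\rangle_{q})$ for a parameter $q$, and (by genericity) $a,b\notin\langle c\rangle_{q}$. Put $X:=\{a,b\}$ (two objects carrying no parameter), $Y:=\langle q,c\rangle$, $C:=\emptyset$ and $C':=\langle q\rangle$, so that $C\subseteq C'\subseteq\langle YC\rangle=\langle Y\rangle$. Over the empty base $\langle X\rangle\cap\langle Y\rangle=\{a,b\}\cap\langle c\rangle_{q}=\emptyset$ (using \cref{clotalg}), so $X\forkindep^{a}_{\emptyset}Y$; but once $q$ enters the base the fibre structure on $\{a,b\}$ is activated, so $c=t_{q}(a,b)\in\langle a,b\rangle_{q}=\langle XC'\rangle$, while $c\in\langle c\rangle_{q}\subseteq\langle YC'\rangle$ and $c\notin\langle q\rangle=\langle C'\rangle$, giving $X\centernot\forkindep^{a}_{C'}Y$. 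This is precisely a base monotonicity violation of $\forkindep^{a}$, hence $\forkindep^{a}\neq\forkindep^{M}$ and $T$ is not simple.

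The main obstacle is the genericity step guaranteeing $\{a,b\}\cap\langle c\rangle_{q}=\emptyset$, that is, that generating from the single mixed element $c$ does not already recover $a$ or $b$; this is where the free-amalgamation hypothesis on $K_{0}$ is used, selecting $a,b$ so that $\langle a,b\rangle_{q}$ is their free amalgam and $\langle c\rangle_{q}$ meets $\{a,b\}$ only trivially. The I.P.\ half I expect to be routine given \cref{qecriterP}, the only subtlety being to record that the separating formula $\theta$ cannot be purely an equality pattern among the coordinates, so that $\overline{\theta}$ is genuinely parameter-sensitive.
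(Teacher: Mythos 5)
Your I.P.\ half is essentially the paper's own argument: the paper likewise fixes a formula separating the two types, a sequence of distinct parameters, and invokes \cref{qecriterP} to realize, for each $I\subseteq\omega$, the type $\pi_{0}$ at the fibres indexed by $I$ and $\pi_{1}$ at the others; your compactness step over $n$ is left implicit there, and your remark that the separating formula must not be a pure equality pattern matches the paper's (equally tacit) use of the non-algebraicity hypothesis in \cref{qecriterP}. For non-simplicity, however, you take a genuinely different route. The paper exhibits the tree property explicitly: a tree of distinct parameters $(p_{\mu})_{\mu\in\omega^{<\omega}}$, a tree of distinct objects $(a_{\mu})$, and the formulas $t_{p_{\mu}}(x,y)=a_{\mu\frown i}$, which are $2$-inconsistent among siblings and consistent along paths by \cref{qecriterP}. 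You instead argue softly: since $\forkindep^{K}=\forkindep^{a}$ by \cref{nsop1ouais}, and in a simple theory Kim-forking coincides with forking (Kim's lemma, plus existence) while $\forkindep^{f}$ always satisfies base monotonicity, a single base-monotonicity failure for $\forkindep^{a}$ rules out simplicity. This reduction is valid, and you do not even need \cref{forkindep} for it --- base monotonicity of $\forkindep^{f}$ is automatic in any theory. The trade-off is that the paper's proof is self-contained and produces an explicit TP witness, whereas yours leans on the Kim-forking machinery.

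The one genuine gap is the step you flag yourself: the claim that ``genericity'' or free amalgamation lets you choose $a,b$ with $\lbrace a,b\rbrace\cap\langle c\rangle_{q}=\emptyset$. Freeness of the pair does not give this. Nothing in $(H)$ forbids an identity of the form $x=s(t(x,y))$ holding throughout $K_{0}$ (projection- or pairing-style classes), and in that case even a maximally generic pair satisfies $a\in\langle c\rangle_{q}$: the universal property of $\langle a\rangle\oplus_{\langle\emptyset\rangle}\langle b\rangle$ only lets you transfer \emph{failures} of equations from a witnessing pair to the free pair, not rule equations out of the free amalgam. So as written the witness can collapse. The gap is repairable by a case split rather than by genericity: if $\lbrace a,b\rbrace\cap\langle c\rangle_{q}=\emptyset$, your configuration runs as stated; if instead, say, $a\in\langle c\rangle_{q}$, swap the roles and take $X=\lbrace c\rbrace$, $Y=\langle q,a\rangle$, $C'=\langle q\rangle$. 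Then $c\notin\langle a\rangle_{q}$ (the non-triviality hypothesis) gives $X\forkindep^{a}_{\emptyset}Y$, while $a\in\langle Xq\rangle\cap\langle Y\rangle$ and $a\notin\langle q\rangle$ (if $a$ were in the constant substructure of the fibre at $q$ then $c=t_{q}(a,b)\in\langle b\rangle_{q}$, a contradiction) give $X\centernot\forkindep^{a}_{C'}Y$. With that patch your proof of the corollary is complete.
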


\begin{proof}
For I.P. consider two distinct $T_{0}$-types $\pi_{i}$ in $\overline{x}$ for $i=0,1$ and a formula $\psi(\overline{x})$ such that $\pi_{0}(\overline{x})\vdash \psi(\overline{x})$ and $\pi_{i}(\overline{x})\vdash \neg\psi(\overline{x})$. If we consider a sequence $(p_{i})_{i<\omega}$ of distinct parameters in $\mathbb{M}$, then by \ref{qecriterP} for every $I\subseteq \omega$ we can find $\overline{a}_{I}$ such that $\models \pi_{0,p_{i}}( \overline{a}_{I})$ if $i\in I$ and $\models \pi_{1,p_{i}}( \overline{a}_{I})$  if $i\centernot\in I$. Then the formula $\psi_{y}(\overline{x})$ has I.P..

\vspace{10pt}

Now for simplicity let $t(x,y)$ be a $L_{0}$-term such that $t(a,b)\centernot\in \langle a\rangle \cup \langle b \rangle$ and consider a tree of parameters all distinct $(p_{\mu})_{\nu \in \omega^{<\omega}}$. Lets now consider a tree of distinct objects $(a_{\mu})_{\mu \in \omega^{<\omega}}$ and the formulas $\varphi(x,y,a_{\mu\frown i},p_{\mu}):=t_{p_{\mu}}(x,y)=a_{\mu\frown i}$ along this tree. Clearly $\lbrace \varphi(x,y,a_{\mu\frown i},p_{\mu})$ : $i <\omega\rbrace$ is 2-inconsistent for every $\mu \in \omega^{<\omega}$, and by \ref{qecriterP} every path is consistent.
\end{proof}

\begin{remark} Even when the relation $\forkindep^{\Gamma}$ satisfies local character in $\mathbb{M}_{P}$ and $\mathbb{M}_{0}$ it does not need to do so in $\mathbb{M}$ (otherwise it would satisfy all conditions of the Kim-Pillay theorem): Take some distinct objects $\overline{o}=o_{1},o_{2},o_{3}$ and a set $P=(p_{i})_{i<\kappa}$ of parameters for $\kappa= \vert T_{P} \vert^{+}$ such that $o_{3}\in \langle o_{1}o_{2}\rangle_{p_{i}}$ for every $i<\kappa$ and $o_{j}\notin \langle P \rangle$ for every $j\in \lbrace 0,1,2\rbrace$. Then there is no subset $P_{0}$ of cardinality $\vert T \vert$ of $P$ such that $\overline{o}\forkindep^{\Gamma}_{P_{0}}P$.
\end{remark}

\begin{remark} In the case where we can find in $K_{0}$ some $c = t(a,b)\in \langle ab\rangle \setminus (\langle a \rangle \cup \langle b \rangle)$ the relation $\forkindep^{a}$ in $\mathbb{M}$ does not admit $4$-amalgamation: Take 3 distinct parameters $p_{0},p_{1},p_{2}$ and distinct objects $b_{0},b_{1},b_{2}$ such that $b_{i},b_{j}\centernot\in \langle p_{k} \rangle$ for every $i,j,k$ such that $\lbrace i,j,k\rbrace = \lbrace 0,1,2 \rbrace$ and such that $t_{p_{0}}(b_{1},b_{2})=t_{p_{1}}(b_{0},b_{2})$, $t_{p_{1}}(b_{0},b_{2})=t_{p_{2}}(b_{0},b_{1})$ and $t_{p_{0}}(b_{1},b_{2})\centernot=t_{p_{2}}(b_{1},b_{2})$. Then the type $\tp(p_{0}/b_{1}b_{2})\cup \tp(p_{1}/b_{0}b_{2})\cup \tp(p_{2}/b_{0}b_{1})$ is inconsistent.

\end{remark}


\section{Examples of generic extensions preserving $(H)$}

Let $K$ be a class of countable $L$-structures satisfying $(H)$. We describe now how adding generic structure to the class $K$ might affect the conditions $(H)$ and some properties of the theory.

\subsection{Adding a generic predicate}

\begin{definition}
Consider a new symbol $U$ for a unary predicate in some sort $S$, we define $L_{U}=L\cup\lbrace U \rbrace$ and the class $K_{U}$ of $L_{U}$-structures consisting of the structures $A$ in $K$ equipped with an arbitrary subset $U(A)\subseteq S(A)$. \end{definition}

\begin{prop}\label{genericrelation} The class $K_{U}$ satisfies $(H)$.
\end{prop}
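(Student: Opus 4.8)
The plan is to verify each of the five conditions of $(H)$ for $K_U$ in turn, leaning on the fact that $K$ already satisfies $(H)$ and that the new predicate $U$ carries no constraints beyond being an arbitrary subset of $S$. I would begin with \textbf{Universal Axiomatisability} (Condition $1$): if an $L_U$-structure $A$ is not in $K_U$, then its $L$-reduct is not in $K$ (since no condition is imposed on $U$ itself), so the quantifier-free $L$-formula witnessing failure in $K$ also witnesses failure in $K_U$. The \textbf{Generic Element} condition (Condition $4$) is equally direct: the generating structures $\langle\{x\}\rangle$ from $K$ serve as generators in $K_U$, where one simply decides whether $x \in U$ or not, giving at most two isomorphism types of one-generated structures per sort, and any element of a $K_U$-structure admits the corresponding morphism by the $K$-version of the property together with matching the $U$-membership.

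The \textbf{Free Amalgamation} condition (Condition $3$) is where the new predicate first interacts with the construction. Given $C \subseteq A, B$ in $K_U$, I would take the free amalgam $D = A \oplus_C B$ of the $L$-reducts in $K$ and define $U(D)$ to be the union $i_A(U(A)) \cup i_B(U(B))$ of the images of the predicates. Since $i_A(A) \cap i_B(B) = i_A(C)$ and $A, B$ agree on $U$ over $C$, this is well-defined. I would then check the universal property: given morphisms $\varphi_A, \varphi_B$ to some $D' \in K_U$ agreeing on $C$, the unique $L$-morphism $\varphi\colon D \to D'$ from the $K$-amalgamation also preserves $U$, because $U$-membership of an element of $D$ is inherited from $A$ or $B$, and $\varphi_A, \varphi_B$ preserve $U$ as morphisms of $L_U$-structures. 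Here it matters that a morphism of $L_U$-structures need only preserve positive atomic formulas, so $\varphi$ must send $U$-elements to $U$-elements but imposes no constraint on non-$U$-elements — exactly what the image-union definition provides.

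For \textbf{Quantifier Elimination} (Condition $2$), I would reduce a quantifier-free $L_U$-formula $\varphi(x,y)$ to its $L$-part together with the finite conjunction of literals $U(t)$ or $\neg U(t)$ for the relevant terms. Applying the $K$-version of Condition $2$ to the $L$-part yields an $L$-formula $\psi$; I would then strengthen $\psi$ by recording which of the terms in $x$ lie in $U$, and show that any $K_U$-structure satisfying this enriched $\psi$ extends to realize $\varphi$, using the $K$-extension and then freely choosing the $U$-membership of the newly added elements to match $\varphi$. \textbf{Algebraically Independent 3-Amalgamation} (Condition $5$) I would handle analogously to Condition $3$: run the $K$-version of the $3$-amalgamation on the $L$-reducts to obtain $D$, then define $U(D)$ as the union of the images of $U$ from the structures being amalgamated, checking consistency on the prescribed intersections. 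The main obstacle, and the step deserving the most care, will be verifying \emph{well-definedness and preservation} of $U$ in the amalgamation steps: one must confirm that along the various intersections forced to coincide (the disjointness-over-$E$ conditions of Conditions $3$ and $5$), the $U$-data from the different factors never conflict, which follows precisely because those intersections are images of common substructures on which the $U$-predicates already agreed.
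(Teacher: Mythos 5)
Your overall strategy is the paper's: Condition $1$ by passing to the $L$-reduct, Conditions $3$ and $5$ by running the amalgamation in $K$ and taking $U$ on the result to be the union of the images of the $U$'s, and Condition $2$ by splitting $\varphi$ into an $L$-part and finitely many $U$-literals. Your insistence on checking that the $U$-data agree on the prescribed intersections is the right well-definedness point, and for Condition $5$ your choice of $U(D)$ as the union of the images of $U(D_{0})$, $U(D_{1})$ and $U(B)$ is exactly what is needed to make $\varphi_{D_{0}}$, $\varphi_{D_{1}}$ and $\varphi_{B}$ into $L_{U}$-embeddings. Two steps, however, do not go through as you state them.

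First, your treatment of Condition $4$ does not verify the condition as stated. Generic Element demands a \emph{single} structure $\langle\lbrace x \rbrace\rangle\in K_{U}$ admitting a morphism onto \emph{every} element $a\in V(A)$ of \emph{every} $A\in K_{U}$; you do not get to pick between two one-generated structures according to the $U$-membership of the target. Of your two candidates, the one satisfying $U(x)$ fails outright: a morphism of $L_{U}$-structures preserves the positive atomic formula $U(x)$, so it can never send $x$ to an element $a\notin U(A)$. The one with $U=\emptyset$ works for \emph{all} targets simultaneously, including those in $U(A)$, precisely by the positive-only-preservation observation you yourself make in your Condition $3$ discussion. So the correct move (and the paper's) is to take $\langle\lbrace x \rbrace\rangle$ from $K$ with $U$ empty, full stop; the ``two isomorphism types matched to the target'' framing should be discarded.

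Second, in Condition $2$ your plan to ``freely choose the $U$-membership of the newly added elements to match $\varphi$'' is missing the strengthening that makes this choice possible. Writing $\varphi = \varphi' \wedge \bigwedge_{i<n} U(t_{i}(x,y)) \wedge \bigwedge_{i<n}\neg U(t'_{i}(x,y))$, you must add to the $L$-part, \emph{before} invoking Condition $2$ of $K$, the inequalities $t'_{i}(x,y)\neq t_{j}(x,y)$ for all $i,j<n$; these hold in $\langle ab\rangle$ since $\varphi$ is satisfied there, so this is a legitimate strengthening in the sense of the remark following the definition of $(H)$. Without them, in the extension $B'$ produced by $K$ the values $t_{i}(a',b')$ and $t'_{j}(a',b')$ may collapse, and then no subset $U(B')$ can satisfy both $U(t_{i})$ and $\neg U(t'_{j})$: the freedom you appeal to is exactly what these inequalities buy. (The paper includes them explicitly and then sets $U(B') = U(A')\cup\lbrace t_{j}(a',b')\rbrace_{j<n}$.) Your idea of recording in $\psi$ which $x$-terms lie in $U$ is a sensible complement — it constrains the term values already named in $A'$ — but it does not substitute for the distinctness constraints between the positively and negatively $U$-constrained terms.
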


\begin{proof}
The Condition $1\ $ is satisfied in $K_{U}$. For Condition $3\ $ we take $$U(A\oplus_{E}B):= U(A)\cup U(B),$$

where $A\oplus_{E}B$ is the free amalgam in $K$. Proving Condition $4\ $ is straightforward, we take $\langle \lbrace x \rbrace \rangle\in K$ and set $U$ to be empty. For Condition $5\ $ we find an amalgam $D$ using Condition $5\ $ in $K$ and set $U(D)$ to be $U(B_{0}) \cup U(B_{1})\cup U(B)$.

\vspace{10pt}
For Condition $2\ $ consider a finitely generated structure $\langle a,b\rangle \in K_{U}$ and a quantifier free $L_{U}$ formula $\varphi$ such that $\models \varphi(a,b)$. Without loss of generality we can assume that: 

$$ \varphi(x,y)=\varphi'(x,y)\wedge \bigwedge\limits_{i<n}U(t_{i}(x,y))\wedge \bigwedge\limits_{i<n}\neg U(t'_{i}(x,y)),$$ 

where $\varphi'(x,y)$ is an $L$-formula and the $t_{i}$ and $t'_{i}$ are $L$-terms. By taking a strengthening of $\varphi$ we can assume that $\varphi'(x,y) \vdash \bigwedge\limits_{j<n} t'_{i}(x,y)\centernot= t_{j}(x,y)$ for every $i<n$.

\vspace{10pt}
Now we consider the formula $\psi'(x)$ associated to the $L$-formula $\varphi'(x,y)\wedge \bigwedge\limits_{i,j<n} (t'_{i}(x,y)\centernot= t_{j}(x,y))$ by Condition $2\ $ in $K$. If $A'\models \psi(a')$ for some $a'\in A'\in K_{R}$ then by assumption we can find some $b'\in B'$ an extension of $A'$ in $K$ such that:

$$B' \models \varphi'(a',b')\wedge \bigwedge\limits_{i<n} (t'_{i}(a',b')\centernot\in \langle t_{j}(a',b'))_{j< n}\rangle).$$ 

Define the subset $U$ on $B'$ to be $ U(A')\cup\lbrace t_{j}(a',b'))_{j< n}\rbrace$, we get that $B'\models \neg U(t'_{i}(a',b'))$ for all $i<n$, so $B'\models\varphi(a',b')$, and $B'$ is an extension of $A'$ in $K_{U}$.\end{proof}

\begin{remark}
If there are some $a,b \in A\in K$ and a term $t(x,y)$ such that $ t(a,b)\in \langle ab\rangle \setminus (\langle a \rangle \cup \langle b\rangle)$ the limit theory $T_{U}$ of $K_{U}$ is non stable, in fact the formula $U(t(x,y))$ has the order property.\end{remark}

\begin{remark}\label{remarkrelatgen} The proof of \cref{genericrelation} work more generally for a relation of any arity: If we were working with an $n$-ary relation we would have some tuples of terms $\overline{t}_{i}$ and $\overline{t}'_{i}$ in the formula $\varphi$ from the beginning of the proof and the formula $\varphi'$ should express that $\overline{t}_{i}\centernot=\overline{t}'_{j}$ for any $i,j$.
\end{remark}

\begin{definition}
Let $L_{R}=L\cup\lbrace R \rbrace$ be a new unary predicate symbol. We define the class $K_{R}$ of $L_{R}$-structures consisting of a structure $A$ in $K$ equipped with an arbitrary substructure $R(A)$. \end{definition}

\begin{prop} If the structures of $K$ are uniformly locally finite the class $K_{R}$ satisfies Condition $1\ -4$. However if the generated structure in $K$ is non-trivial (i.e. in $\langle A \rangle \centernot= A$ for some set $A$) then Condition $5\ $ needs not to hold.
\end{prop}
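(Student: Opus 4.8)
The plan is to handle the two assertions separately, treating a substructure predicate as behaving like the generic predicate of \cref{genericrelation} \emph{except} that membership in $R$ must now be closed under the function symbols of $L$.

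For Conditions $1$, $3$ and $4$ I expect only routine work. \textbf{Condition 1} is immediate: an $L_R$-structure fails to lie in $K_R$ either because its $L$-reduct fails Condition $1$ in $K$, or because $R$ is not a substructure, and the latter is witnessed by a quantifier free formula $R(\overline{c})\wedge\neg R(f(\overline{c}))$ (or a missing constant). Thus "$R$ is an $L$-substructure" contributes only universal axioms, and $K_R$ stays hereditary since $R(A)\cap A'$ is a substructure of any $A'\subseteq A$. For \textbf{Condition 3} I would form $A\oplus_E B$ in $K$ and set $R(A\oplus_E B):=\langle R(A)\cup R(B)\rangle$. The universal property is inherited: any $L_R$-morphisms $\varphi_A,\varphi_B$ into $D'\in K_R$ send $R(A),R(B)$ into the substructure $R(D')$, so the unique $L$-morphism $\varphi$ from Condition $3$ in $K$ maps $\langle R(A)\cup R(B)\rangle$ into $R(D')$ and is an $L_R$-morphism. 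The one point needing free amalgamation is that $i_A$ stays an $L_R$-embedding, i.e. $\langle R(A)\cup R(B)\rangle\cap A=R(A)$; this is the standard fact that $\langle XY\rangle\cap A=\langle X,\,Y\cap E\rangle$ in a free amalgam, which I would derive from base monotonicity of $\forkindep^{\Gamma}$ (\cref{gammapropP}) together with $R(A)\cap E=R(B)\cap E=R(E)$. \textbf{Condition 4} follows by equipping the generic element $\langle\{x\}\rangle$ of $K$ with $R$ equal to the substructure generated by the constants: since morphisms preserve $R$ positively and every $R(A)$ contains the constants, each $K$-morphism sending $x$ to a chosen element is automatically an $L_R$-morphism. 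None of these three uses local finiteness.

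\textbf{Condition 2} is where uniform local finiteness enters and is the delicate part. Writing a quantifier free $L_R$-formula as $\varphi(x,y)=\varphi'(x,y)\wedge\bigwedge_i R(t_i(x,y))\wedge\bigwedge_j\neg R(s_j(x,y))$ with $\varphi'$ an $L$-formula, suppose $(a,b)$ realises it in $A\in K_R$. Because $K$ is uniformly locally finite, $\langle ab\rangle$ is finite, so its trace $R\cap\langle ab\rangle$ is a finite substructure and the whole $L_R$-isomorphism type of $\langle ab\rangle$ is captured by a single quantifier free formula, which I take as a strengthening of $\varphi$; in particular it pins down $R\cap\langle a\rangle$ and records that each $s_j(a,b)$ lies \emph{outside} the finite set $\langle(R\cap\langle a\rangle)\cup\{t_i(a,b)\}\rangle$. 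Applying Condition $2$ of $K$ to the $L$-part (strengthened to the $L$-diagram of $\langle ab\rangle$) gives the required $\psi$ on the reduct, to which I adjoin the finite quantifier free description of $R\cap\langle x\rangle$. Given $a'\in A'\in K_R$ with $A'\models\psi(a')$, Condition $2$ of $K$ produces $b'$ in an extension with $\langle a'b'\rangle\cong\langle ab\rangle$ as $L$-structures, and replacing it by the free amalgam $B':=A'\oplus_{\langle a'\rangle}\langle a'b'\rangle$ ensures $\langle a'b'\rangle\cap A'=\langle a'\rangle$. Transporting $R\cap\langle ab\rangle$ to $R_0\subseteq\langle a'b'\rangle$ and setting $R(B'):=\langle R(A')\cup R_0\rangle$, the same intersection property yields $R(B')\cap A'=R(A')$ and $R(B')\cap\langle a'b'\rangle=R_0$; hence the positive atoms hold and, crucially, the negative atoms $\neg R(s_j)$ survive because $\psi$ forced $s_j(a',b')\notin R_0$. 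So $B'\models\varphi(a',b')$ in $K_R$.

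For the failure of \textbf{Condition 5} it suffices to exhibit one non-trivially generated class where the substructure generated by the three $R$-faces over-spills into some $D_i$. Take $K$ to be abelian groups (so $\langle a,b\rangle\supsetneq\langle a\rangle\cup\langle b\rangle$), with $E=0$, $A=\langle a\rangle$, $B_i=\langle b_i\rangle$, $D_i=\langle a,b_i\rangle$, $B=\langle b_0,b_1\rangle$ the free amalgams, so that any completion contains $\langle a,b_0,b_1\rangle$. I would choose the \emph{entangled} faces $R(D_0)=\langle a+b_0\rangle$, $R(D_1)=\langle a+b_1\rangle$, $R(B)=\langle b_0+b_1\rangle$ with $R(E)=R(A)=R(B_i)=0$; these are available precisely because generation is non-trivial (the generators $a+b_i$, $b_0+b_1$ are new elements), and they meet $A$ and the $B_i$ trivially, so the input diagram is a legitimate $K_R$-diagram. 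But any admissible $R(D)$ must contain the images of all three faces, hence the element $(a+b_0)-(a+b_1)+(b_0+b_1)=2b_0$, which lies in $D_0$ yet not in $R(D_0)=\langle a+b_0\rangle$; this contradicts $\varphi_{D_0}$ being an $L_R$-embedding, and no completion (even a proper extension of the generated structure) can avoid it. Hence $K_R$ fails Condition $5$. \textbf{The main obstacle} is Condition $2$: the negative constraints $\neg R(s_j)$ are preserved only because uniform local finiteness reduces the $R$-trace on $\langle ab\rangle$ to a finite, first-order-expressible datum and because the free amalgam intersection property keeps $\langle R(A')\cup R_0\rangle$ from capturing any $s_j$; establishing that intersection property cleanly from base monotonicity of $\forkindep^{\Gamma}$ is the step I would be most careful about, since it is reused in both Conditions $2$ and $3$.
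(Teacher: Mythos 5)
Your proposal is correct and follows the paper's proof essentially step for step: Conditions $1$, $3$, $4$ via $R(A\oplus_{E}B)=\langle R(A)R(B)\rangle$ and $R(\langle\lbrace x\rbrace\rangle)$ the substructure generated by the constants; Condition $2$ by using uniform local finiteness to make both the full trace of $R$ on the finite structure $\langle ab\rangle$ and the non-generation constraints $t'_{i}(x,y)\centernot\in\langle (t_{j}(x,y))_{j<n}\rangle$ quantifier-free definable, then transporting $R$ to the extension and passing to $B'=A'\oplus_{\langle a'\rangle}\langle a'b'\rangle$, exactly as the paper does. Your Condition $5$ counterexample (a subgroup predicate on torsion-free abelian groups, where $(a+b_{0})-(a+b_{1})+(b_{0}+b_{1})=2b_{0}$ lies in $\langle R(D_{0})\cup R(D_{1})\cup R(B)\rangle$ but not in $R(B)$) is the same entanglement phenomenon as the paper's example of a vector space over a finite field with a predicate for a subspace, where $(a-b_{1})+(b_{0}+b_{1})=a+b_{0}$ forces $R(a+b_{0})$ against $\neg R(a_{0}+b_{0})$, so the two arguments differ only cosmetically.
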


\begin{proof}
The Conditions $1$, $3$ and $4$ are still true by the same arguments as before, we will need the local finiteness assumption on $K$ to get Condition $2$. We define $P(A\oplus_{E}B):= \langle P(A)P(B)\rangle $ and $P(\langle \lbrace x \rbrace \rangle):= P(\langle \emptyset \rangle)$. For Condition $2\ $ consider a finitely generated structure $\langle a,b\rangle \in K_{R}$ and a quantifier free $L_{R}$ formula $\varphi$ such that $\langle a,b\rangle\models \varphi(a,b)$. Without loss of generality we can assume that: 

$$ \varphi(x,y)=\varphi'(x,y)\wedge \bigwedge\limits_{i<n}R(t_{i}(x,y))\wedge \bigwedge\limits_{i<n}\neg R(t'_{i}(x,y)),$$ 

where $\varphi'(x,y)$ is an $L$-formula, for all $c\in R(\langle ab\rangle)$ there is some $i<n$ such that $c=t_{i}(a,b)$ and for all $c\in \langle ab\rangle \setminus R(\langle ab\rangle)$ there is some $i<n$ such that $c=t'_{i}(a,b)$.

\vspace{10pt}
Since our theory is uniformly locally finite the Condition $t'_{i}(x,y)\centernot\in \langle (t_{j}(x,y))_{j< n}\rangle$ is definable. Now we consider the formula $\psi'(x)$ associated to the $L$-formula $\varphi'(x,y)\wedge \bigwedge\limits_{i<n} (t'_{i}(x,y)\centernot\in \langle (t_{j})_{j< n}\rangle)$ by Condition $2\ $ in $K$, and then the formula:

$$\psi(x) :=\psi'(x)\wedge \bigwedge\limits_{k<m} R(t_{i_{k}}(x)) \wedge \bigwedge\limits_{k<m} \neg R(t_{i_{k}}(x)),$$

where $t_{i_{k}}(x)$ enumerates the terms $t$ such that $R(t(a))$ and $t'_{i_{k}}(x)$ enumerates the terms $t'$ such that $\neg R(t'(a))$. If $A'\models \psi(a')$ for some $a'\in A'\in K_{R}$ then by assumption we can find some $b'$ such that $\langle a'b'\rangle \models \varphi'(a',b')\wedge \bigwedge\limits_{i<n} (t'_{i}(a',b')\centernot\in \langle t_{j}(a',b'))_{j< n}\rangle)$. Define the substructure $R$ on $\langle a'b'\rangle$ to be $ \langle t_{j}(a',b'))_{j< n}\rangle$. We obtain that $\langle a'b'\rangle\models \neg R(t'_{i}(a',b'))$ for all $i<n$, so $\langle a'b'\rangle\models\varphi(a',b')$, and this property still holds in $B':=A'\oplus_{\langle a' \rangle}\langle a'b'\rangle$, which is an extension of $A$ as an $L_{R}$-structure.

\vspace{10pt}

Now about Condition $5\ $: By Condition $5\ $ in the class $K$ we can find some structure $D\in K$ and some embeddings as in the statement. We want to define $R(D)$. We know that in any case $\langle R(D_{0})\cup R(D_{1}) \cup R(B)\rangle \subseteq R(D)$, however it might happen that $b\in \langle R(D_{0})\cup R(D_{1}) \cup R(B)\rangle $ for some $b\in B\setminus R(B)$, and in that case the $L$-embedding $\varphi_{B}$ can not be an $L_{R}$-embedding.\end{proof}

To give a simple example of this last fact, let us look at some vector space over a finite field with a predicate for a subspace. Take some independent vectors $a_{0},a_{1},b_{0},b_{1}$ as in the statement of the independence theorem for $\forkindep^{a}$ such that $\neg R(a_{0}+b_{0})$, $ R(a_{1}-b_{1})$ and $R(b_{0}+b_{1})$ are satisfied. Then $R(a+b_{0})$ is satisfied if $a$ satisfies $\tp(a_{i}/b_{i})$ for $i=0,1$, which is a contradiction. In that case the limit of $K_{R}$ exists, and it might be NSOP$_1$, however $\forkindep^{K}\centernot= \forkindep^{a}$ in the limit theory.

\begin{remark} If we assume that the class $K$ is uniformly locally finite we can restrict it to the class of finite structures in $K$ and have a Fraïssé class in the usual sense, then its limit is $\omega$-categorical.
\end{remark}

As we have seen in the previous section Condition $5\ $ puts a strong restriction on what we can add to the structure, and some classes with an NSOP$_1$ limit can satisfy the Conditions $1\ -4\ $ and not satisfy Condition $5$. For example the class of finite equivalence relations, with limit the (stable) theory of the equivalence relation with an infinite number of classes all infinite (which we will write $T_{\infty}$), or its parameterized version (see \cite{kaplan2020kim} or \cite{bossut2023note}) which is strictly NSOP$_1$, satisfies existence and that $\forkindep^{K^{M}}=\forkindep^{f}=\forkindep^{d}$ over arbitrary sets.

\begin{definition}
Consider a new symbol $E$ for a binary predicate in some sort $S$, we define the language $L_{E}=L\cup\lbrace V,E,p \rbrace$ where $V$ is a new sort, $E$ a binary relation on $S$, and $p$ a unary map from $S$ to $V$. Define the class $K_{E}$ of $L_{E}$-structures consisting of any structure $A$ in $K$ equipped with an equivalence relation $E^{A}\subseteq S(A)^{2}$, a new sort $V(A)$ and a map $p : S(A)\rightarrow V(A)$ such that $p(a)=p(b)$ if and only if $E(a,b)$ for all $a,b\in S(A)$.\end{definition}

\begin{prop} The class $K_{E}$ satisfies $(H)$.
\end{prop}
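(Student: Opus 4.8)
The plan is to verify the five conditions of $(H)$ for $K_{E}$, following the pattern of \cref{genericrelation} but with the essential new feature that the sort $V$ together with the map $p$ must be built alongside the equivalence relation $E$. The guiding principle throughout is that, up to isomorphism, an $L_{E}$-structure is an $L$-structure $A$ together with a surjection onto $p(S(A)) = S(A)/E$ sitting inside an arbitrary overset $V(A)$. So the correct way to define $E$ on any amalgam is to first build the amalgamated copy of $V$ and of the map $p$, and only then declare $E(s,t)$ to hold exactly when $p(s)=p(t)$. This automatically makes $E$ an equivalence relation compatible with $p$, so the defining axioms of $K_{E}$ come for free.

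Condition $1\ $ is immediate: a structure fails to lie in $K_{E}$ either because its $L$-reduct is not in $K$ (handled by Condition $1\ $ in $K$), because $E$ is not an equivalence relation, or because the compatibility $p(a)=p(b)\leftrightarrow E(a,b)$ fails; each of the latter failures is witnessed by a quantifier-free formula on finitely many elements (e.g.\ $E(x,y)\wedge E(y,z)\wedge \neg E(x,z)$) satisfied in the bad structure and in no member of $K_{E}$. For Condition $3\ $, given $C\subseteq A,B$ I take the free amalgam $A\oplus_{C}B$ in $K$ for the $L$-reduct, set $V(A\oplus_{C}B):= V(A)\sqcup_{V(C)}V(B)$ together with one fresh class for each newly generated element of $S$, define $p$ as the common extension of $p^{A}$ and $p^{B}$ sending new $S$-elements to their fresh classes, and put $E(s,t)\iff p(s)=p(t)$. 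The universal property then reduces to that of the $L$-free amalgam on the object part and to the (trivial) free amalgamation of the unstructured sort $V$: any pair of morphisms agreeing on $C$ glues on $V(A)\sqcup_{V(C)}V(B)$, is forced on the fresh classes by $p$, and preserves $E$ because it preserves $p$. Condition $4\ $ is handled by taking the generic $L$-element from Condition $4\ $ in $K$ (for an object sort, resp.\ $S$) with $E$ trivial and each $S$-element in its own class, and for the sort $V$ by a structure with a single phantom class; a target element then determines a morphism by sending each $p$-class to the $p$-image of the underlying $L$-morphism. The point worth flagging here is that $E$ must be defined through $p$ and the amalgamated $V$, not as the equivalence relation generated by $E^{A}\cup E^{B}$: a class that is phantom in $C$ may be realized by distinct elements $a\in S(A)\setminus C$ and $b\in S(B)\setminus C$, and these are forced to be $E$-equivalent in the amalgam even though no $E^{A}$-$E^{B}$ chain connects them. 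Defining $E$ as the fibres of the amalgamated $p$ captures exactly this, and one checks that $E$ restricts correctly to $A$ and to $B$ since merging only occurs inside $V(C)=V(A)\cap V(B)$.

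Condition $5\ $ is proved in the same spirit: I apply Condition $5\ $ in $K$ to the $L$-reducts to obtain $D$, glue the three copies of $V$ along the intersections prescribed in the statement of Algebraically Independent 3-Amalgamation with fresh classes for new $S$-elements, let $p^{D}$ be the amalgamated map and $E^{D}$ its fibres. The two intersection requirements then hold for the $L_{E}$-structure because they hold for the $L$-reduct and because the sort $V$ has been glued along exactly those intersections.

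The main work, and the main obstacle, is Condition $2\ $. Since $E(s,s')\leftrightarrow p(s)=p(s')$, every atomic $L_{E}$-formula is equivalent either to an $L$-atomic formula or to an equation between $V$-terms (which are $V$-variables and terms of the form $p(s)$); hence a quantifier-free $L_{E}$-formula $\varphi(x,y)$ realized by $(a,b)$ amounts to an $L$-formula together with a full specification of which relevant $S$-terms share a $p$-value and which $V$-variables they meet. Following the generic-predicate argument, I strengthen $\varphi$ so that $S$-terms required to lie in different classes are provably distinct, apply Condition $2\ $ in $K$ to the $L$-part to produce the witnessing formula $\psi(x)$, and extend a realization $a'\models\psi$ by first extending the $L$-structure to obtain $b'$ and then defining $p$ on the new elements: the positive constraints dictate the shared classes, while all remaining new elements are sent to fresh classes, which is precisely where genericity of $E$ is used to satisfy the negative constraints $\neg E$ and $p(s)\neq w$. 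The delicate bookkeeping is to make the strengthening force enough inequalities among the $S$-terms, and enough separation among the $V$-variables (including the phantom ones), that the negative requirements can always be met after the free extension; this is the step I expect to require the most care.
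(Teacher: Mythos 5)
Your proposal is correct and takes essentially the same route as the paper: the paper's (very terse) proof also performs free amalgamation ``in parallel'' in $K$ and in the quotient sort for Conditions $1$--$5$, notes exactly your phantom-class point by remarking that Condition $4$ would fail if $p$ were required surjective, and even states your guiding reformulation explicitly in the remark that follows, namely that $K_{E}$ coincides with adding a pure-equality sort $V$ and a generic function $p : S \rightarrow V$. Your detailed handling of Condition $2$ via fresh classes and of the subtlety that $E$ must be the fibres of the amalgamated $p$ (rather than the equivalence relation generated by $E^{A}\cup E^{B}$) is a faithful, more careful elaboration of what the paper leaves implicit, following its generic-predicate and generic-function arguments.
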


\begin{proof} If we do not add the quotient to the structure Conditions $1\ -4\ $ are satisfied but Condition $5\ $ is not, also if we ask for the map $p$ to be surjective the Condition $4\ $ will not hold. It is easy to see that Conditions $1\ -4\ $ are satisfied in $K_{E}$: The only structure on the quotient is equality and we can do `in parallel' free amalgamation in $K$ and in the quotient. For $5\ $ we proceed similarly by forming the amalgam in $K$ on one end and the amalgam of the quotient on the other end.\end{proof}

Notice that this construction coincides with adding a new sort $V$ with no structure other than equality and then a generic function from $S$ to $V$.

\subsection{Adding generic functions}

\begin{definition}
Consider a new symbol $f$ for a binary function, we define $L_{f}=L\cup\lbrace f \rbrace$ and the class $K_{f}$ of $L_{f}$-structures consisting of a structure $A$ in $K$ equipped with a binary function $f_{A} : A\times A\rightarrow A$ such that the structure generated by the constants is some specified $L_{f}$-structure (in order to have the joint embedding property and uniqueness of the limit the structures need to agree on the constants). \end{definition}

\begin{prop}\label{genericfonction} The class $K_{f}$ satisfies $(H)$.
\end{prop}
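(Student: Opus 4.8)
The plan is to verify the five conditions of $(H)$ for $K_{f}$ one at a time, in each case reducing to the corresponding condition for $K$ and handling the new function $f$ by a bookkeeping argument. The guiding principle throughout is that the value of $f$ on any pair that is not already forced should be taken to be a \emph{fresh generic element}, produced by iterated application of the Generic Element property (Condition $4$) in $K$; this is the exact analogue of the iterated free amalgamation carried out in the proof of \cref{freeamalgP}. Condition $1$ is immediate, just as in \cref{aximatizparam} and \cref{genericrelation}: an $L_{f}$-structure lies in $K_{f}$ iff its $L$-reduct lies in $K$ (a universal condition since $K$ satisfies Condition $1$, and hereditarity then passes to substructures closed under $f$) and the substructure generated by the constants is the specified one (a condition concerning only constant terms, hence again universal).

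For Condition $3$, given $E\subseteq A,B$ in $K_{f}$, I would first form the free amalgam $A\oplus_{E}B$ of the $L$-reducts in $K$. On the resulting set $f$ is already determined on pairs coming from $A$ and on pairs coming from $B$ (these agree over $E$), but it is undefined on mixed pairs. I then build an increasing chain $D^{0}\subseteq D^{1}\subseteq\cdots$ in $K$: at each stage, for every pair whose $f$-value is still undefined I adjoin one fresh generator via Condition $4$ in $K$ and declare it to be that value. The union $D=\bigcup_{k<\omega}D^{k}$ is closed under $f$, its $L$-reduct lies in $K$ by closure under countable unions (Condition $1$), and $A\cap B=E$ inside $D$ because the adjoined elements are fresh. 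The universal property is verified exactly as in \cref{freeamalgP}: given $L_{f}$-morphisms $\varphi_{A},\varphi_{B}$ into some $D'\in K_{f}$ agreeing on $E$, the universal property of $\oplus_{E}$ in $K$ yields the map on $D^{0}$, and one extends it stage by stage by sending each fresh value $f(x,y)$ to $f_{D'}(\varphi(x),\varphi(y))$, which is legitimate because each such element is a free generator. The resulting map preserves $f$ by construction and is unique since $D$ is $L_{f}$-generated by the images of $A$ and $B$.

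Condition $4$ is handled the same way: I would start from the $K$-generic $\langle\lbrace x\rbrace\rangle$ in the relevant sort and close it under $f$ by iteratively adjoining fresh generators for the undefined $f$-values. Given $a\in V(A)$ for $A\in K_{f}$, the $K$-genericity gives a morphism sending $x\mapsto a$, and this extends over the fresh $f$-values by the same generator-by-generator argument, since a morphism need only send each generic value $f(x,y)$ to $f_{A}(\varphi(x),\varphi(y))$. Condition $5$ follows the template of \cref{amalgamationaP}: form the required algebraically independent $3$-amalgam of the $L$-reducts in $K$ using Condition $5$ there, then fill in the mixed $f$-values by fresh generators. Freshness guarantees that none of the prescribed intersection equalities (the algebraic-independence requirements $\varphi_{D_{i}}(D_{i})\cap\varphi_{B}(B)=\cdots$ and $\varphi_{D_{0}}(D_{0})\cap\varphi_{D_{1}}(D_{1})=\cdots$) is violated, since the new elements land outside all of the prescribed images.

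The main obstacle is Condition $2$, and here I would adapt the flattening technique of \cref{cond4}. Given a quantifier-free $L_{f}$-formula $\varphi(\overline{x},\overline{y})$ realized by $(\overline{a},\overline{b})$, I introduce a fresh variable $z_{i}$ for each $f$-subterm occurring in $\varphi$ together with its defining equation $z_{i}=f(u_{i},w_{i})$, so that after rewriting $\varphi$ takes the form $\varphi_{L}(\overline{x},\overline{y},\overline{z})\wedge\bigwedge_{i}z_{i}=f(u_{i},w_{i})$ with $\varphi_{L}$ a quantifier-free $L$-formula and $u_{i},w_{i}$ earlier terms. After strengthening I would arrange that $\varphi$ decides, for each flattened value, whether it equals an $\langle\overline{a}\rangle$-term or is distinct from all of them; the former become $f$-constraints that I fold into the target formula $\psi(\overline{x})$, while the latter are to be realized generically. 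I then take $\psi$ to be the $L$-formula supplied by Condition $2$ in $K$ applied to $\varphi_{L}$ (with $\overline{z}$ treated as new object variables), conjoined with these $\langle\overline{a}\rangle$-internal $f$-constraints. If $\overline{a}'\in A'\in K_{f}$ satisfies $\psi$, then Condition $2$ in $K$ produces an $L$-extension realizing $\varphi_{L}$, and the undecided $f$-values are set to fresh generic elements exactly as in Condition $3$, giving $B'\supseteq A'$ in $K_{f}$ with $B'\models\varphi(\overline{a}',\overline{b}')$. The delicate point, and the reason this step is hardest, is making the strengthening strong enough that every $f$-value not explicitly identified with an existing term is genuinely free, so that no hidden equality can force such a value into $\langle\overline{a}'\rangle$ and obstruct the extension.
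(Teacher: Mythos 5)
Your overall skeleton is the same as the paper's: Condition $1$ is immediate; Conditions $3$ and $4$ are handled by iterating the generic extension $\oplus_{\langle\emptyset\rangle}\langle\{\cdot\}\rangle$ to adjoin the missing $f$-values, taking the union of the resulting chain, and verifying the universal property stage by stage; Condition $5$ is obtained by applying Condition $5$ in $K$ to the $L$-reducts and then decorating the amalgam with $f$; Condition $2$ goes by flattening the $f$-subterms into new variables with defining equations, separating the $\langle\overline{a}\rangle$-internal instances (folded into $\psi$) from the rest, and applying Condition $2$ in $K$ to the pure $L$-part. One harmless divergence: for Condition $5$ the paper does not adjoin fresh generators at all --- it defines $f_D$ on the images of $D_{0}^{2}$, $D_{1}^{2}$ and $B^{2}$ by transport (consistent on overlaps because of the prescribed intersection equalities) and lets $f$ take \emph{arbitrary} values elsewhere; your fresh-generator variant also works, precisely because Condition $5$ allows the completed structure to be an extension of the generated one, but it is more machinery than needed.

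The genuine soft spot is in your Condition $2$, and it is the mechanism, not just the bookkeeping. The flattened values $z_{i}$ that are not $\langle\overline{a}\rangle$-internal are \emph{constrained witnesses}: they occur in $\varphi_{L}$ and are produced by Condition $2$ in $K$, which gives no control over where they land in the extension. So they cannot be ``set to fresh generic elements exactly as in Condition $3$'' --- Condition $4$ only produces unconstrained generators, and only the pairs on which $\varphi$ imposes no equation can be treated generically (or, as in the paper, arbitrarily). Consequently freshness is not available to guarantee that the forced assignments $z_{i}=f(u_{i},w_{i})$ are consistent with one another and with $f_{A'}$, which is exactly the worry you flag at the end but leave unresolved. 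The paper's device is different and should be made explicit: before extracting $\psi$, strengthen $\varphi$ so that whenever two of the required $f$-values are distinct in the original realization, the corresponding argument pairs are asserted distinct, i.e.\ add $(t_{k}(x_{\leq m},y_{<k}),t'_{k}(x_{\leq m},y_{<k}))\centernot=(t_{k'}(x_{\leq m},y_{<k'}),t'_{k'}(x_{\leq m},y_{<k'}))$, and similarly against the $\langle\overline{a}\rangle$-internal $(s_{i},s'_{i})$-instances, which are the ones folded into $\psi$ so that $f_{A'}$ already takes the prescribed values on them. With this clause the finitely many forced assignments form a well-defined partial function extending $f_{A'}$, and $f_{B'}$ can be completed arbitrarily. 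Your proposed substitute --- arranging that each value is ``distinct from all'' $\langle\overline{a}\rangle$-terms --- is not a single quantifier-free condition (there are infinitely many such terms), so as stated your strengthening cannot be implemented; the pairwise-inequality strengthening among the finitely many $f$-instances actually occurring in $\varphi$ is what replaces it.
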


\begin{proof}
Condition $1\ $ is clearly satisfied in $K_{f}$. For Condition $3\ $ we construct some $L_{f}$-structure $D$ using $3\ $ and $4\ $ in $K$:

\vspace{10pt}
Consider $E\subseteq A,B$ two extensions of $L_{f}$-structures. We define an increasing sequence of $L$-structures $(D_{i})_{i<\omega}$ equipped with a partial binary function $f_{i+1}: D_{i}^{2}\rightarrow D_{i+1}$. Let $D_{0}:=A\oplus_{E}B$ and $f_{0}=f_{E}$. Now set: 

$$D_{1}:= D_{0}\oplus_{\langle \emptyset \rangle}\langle \lbrace D_{0}^{2}\setminus (A^{2}\cup B^{2})\rbrace \rangle,$$

and consider $f_{1} : D_{0}^{2} \rightarrow D_{1}$ defined the following way:

\begin{enumerate}
    \item $f_{1}(x,y):=f_{A}(x,y)$ if $(x,y)\in A^{2}$,
    \item $f_{1}(x,y):=f_{B}(x,y)$ if $(x,y)\in B^{2}$,
    \item $(x,y)$ is sent to the element of $D_{1}$ with the corresponding label otherwise.
\end{enumerate}

We iterate this process: if $D_{i},f_{i+1}$ are defined for $i>0$ we let:

$$D_{i+1}:= D_{i}\oplus_{\langle \emptyset \rangle}\langle \lbrace
D_{i}^{2}\setminus (D_{i-1}^{2})\rbrace \rangle,$$

and $f_{i+1}$ extends $f_{i}$ by sending $(x,y)\in D_{i}^{2}\setminus (D_{i-1}^{2})$ to the element of $D_{i+1}$ with the corresponding label. We then set $D:=\bigcup\limits_{i<\omega}D_{i}$ and $f:=\bigcup\limits_{i<\omega}f_{i} : D^{2}\rightarrow D$. This structure satisfies the universal property: it is generated by the images of $A$ and $B$ as an $L_{f}$-structure, given $D'$ and embeddings $\varphi_{A} : A \rightarrow D'$, $\varphi_{B} : B \rightarrow D'$ as in the statement we can define an increasing sequence of morphisms of $L$-structures $\varphi_{i} : D_{i} \rightarrow D'$ in a canonical way and show that its union is a morphism of $L_{f}$-structures. The construction of $\langle \lbrace x \rbrace \rangle$ in $K_{f}$ follows the same method. 

\vspace{10pt}
Condition $5\ $ is easy to check: Consider $E,A,B_{i},D_{i}$ for $i=0,1$ and $D$ as in the statement. We apply $5\ $ inside of $K_{0}$ to find embeddings of $L$-structures into some $D\in K$, and define the function $f_{D}$ in $D$ so that these maps become embeddings of $L_{f}$-structures: Take $f_{D}\vert_{D_{i}^{2}}$ to be the image of $f_{D_{i}}$ by $\varphi_{D_{i}}$, similarly for $B$, and we can set $f$ to be anything outside of $D^{2}_{i}$ and $B^{2}$.

\vspace{10pt}
For Condition $2\ $ consider a structure $\langle ab\rangle \in K_{f}$ and some quantifier-free $L_{f}$-formula $\varphi$ such that $\langle ab\rangle \models \varphi(a,b)$. By replacing every instance of $f$ in $\varphi$ by a new variable and separating the instances of $f$ applied only to elements of $\langle a \rangle$ from the other ones we can reshape $\varphi(x,y)$ into a formula of the following form:

$$\varphi'(x_{\leq m},y_{\leq n})\wedge \bigwedge\limits_{i\leq m}(x_{i}=f(s_{i}(x_{<i}),s'_{i}(x_{<i})))\wedge \bigwedge\limits_{i\leq n}(y_{i}=f(t_{i}(x_{\leq m},y_{<i}),t'_{i}(x_{\leq m},y_{<i})))$$
where $\varphi'$ is an $L$ quantifier-free formula and the $t_{i},s_{i},t'_{i},s'_{i}$ are $L$-terms. We define by induction $x_{0}(a)=a$, $x_{k}(a)= f(s_{k}(x_{<k}(a)),s'_{k}(x_{<k}(a)))$ if $k>0$, $y_{0}(a,b)=b$ and if $k>0$ $$y_{k}(a,b):=f(t_{k}(x_{\leq m}(a),y_{<k}(a,b)),t'_{k}(x_{\leq m}(a),y_{<k}(a,b))).$$ If $y_{k}(a,b)\centernot=y_{k'}(a,b),$ we strengthen the formula $\varphi$ to specify that: $$(t_{k}(x_{\leq m},y_{<k}),t'_{k}(x_{\leq m},y_{<k})) \centernot= (t_{k'}(x_{\leq m},y_{<k'}),t'_{k'}(x_{\leq m},y_{<k'})),$$ and similarly for $(s_{k}(x_{\leq k}),s'_{k}(x_{\leq k}))$ and $(s_{k'}(x_{\leq k'}),s'_{k'}(x_{\leq k'}))$.

\vspace{10pt}
Now consider the $L$-formula $\psi'(x_{\leq m})$ associated by Condition $2\ $ in $K$ to  $\varphi'(x_{\leq m},y_{\leq n})$ and then the $L_{f}$-formula:

$$\psi(x):=\psi'(x_{\leq m}) \wedge \bigwedge\limits_{i\leq m}(x_{i}=f(s_{i}(x_{<i}),s'_{i}(x_{<i}))).$$

Assume that $A'\models \psi (a')$ for some $a'\in A'$. By assumption there is an $L$-extension $A'\subseteq B'$ and $b'_{\leq n} \in B'$ such that:

$$B'\models \varphi'(a'_{\leq m},b'_{\leq n})\wedge \bigwedge\limits_{i\leq m}(a'_{i}=f(s_{i}(a'_{<i}),s'_{i}(a'_{<i}))).$$

We just need to define a binary function $f_{B'}$ on $B'$ extending $f_{A'}$ such that

$$b'_{k}=f(t_{i}(a'_{\leq m},b'_{<i}),t'_{i}(a'_{\leq m},b'_{<i}))$$ 

for every $k\leq n$. We begin by setting these values, this is possible thanks to the strengthening we had, and we can let $f_{B'}$ take any values elsewhere. We write $B'_{f}$ for the $L_{f}$-structure defined on $B'$, then $B'_{f}\models \varphi(a',b')$.\end{proof}

\begin{remark} Adding a generic binary function assures us that the theory of any limit is strictly NSOP$_1$, as proven in \cite[Section 3]{kruckman2018generic}. Also we get that the generated structure is non-trivial. With the same construction we can add generic functions of any arity.
\end{remark}

\begin{remark}\label{expansgeneric} The proof of \cref{genericfonction} work more generally for a function of any arity: Adding variables to the generic function just makes the $L'$-formulas $\psi'$ longer to write down but the proof works similarly. This remark added with \cref{remarkrelatgen} gives us that if some class of structures $K$ satisfy $(H)$ then the expansion of $K$ by some generic language also does.
\end{remark}

\begin{definition} We define $L_{\pi}=L\cup \lbrace \pi, \pi^{-1} \rbrace$ new symbols of unary functions and the class $K_{\pi}$ of $L_{\pi}$-structures consisting of a structure $A$ in $K$ equipped with a bijection $\pi_{A}$ of inverse function $\pi^{-1}_{A}$ such that the structure generated by the constants is some specified $L_{\pi}$-structure.
\end{definition}

\begin{prop}The class $K_{\pi}$ satisfies $(H)$.
\end{prop}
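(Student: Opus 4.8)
The plan is to follow the proof of \cref{genericfonction} essentially verbatim, treating $\pi$ as a generic unary function, with the one extra feature that $\pi$ must be a bijection; the inverse symbol $\pi^{-1}$ forces us to generate, at every step, both the $\pi$-image and the $\pi$-preimage of each new element, and to keep $\pi$ injective. Condition $1\ $ is immediate, since being an $L_{\pi}$-structure in $K_{\pi}$ amounts to lying in $K$ together with the universal statements that $\pi,\pi^{-1}$ are everywhere-defined mutually inverse maps.

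For Condition $3\ $, given $E\subseteq A,B$ in $K_{\pi}$ I would start from $D_{0}:=A\oplus_{E}B$ in $K$. The maps $\pi_{A}$ and $\pi_{B}$ agree on $E$ and their union is a partial bijection of $A\cup B$ inside $D_{0}$: if $\pi_{A}(a)=\pi_{B}(b)$ with $a\in A\setminus E$ and $b\in B\setminus E$ the common value lies in $A\cap B=E$, forcing $a,b\in E$, a contradiction. The generated elements of $D_{0}$ outside $A\cup B$, as well as the missing preimages, have no assigned $\pi$-value yet, so I build an increasing chain $(D_{i})_{i<\omega}$ in $K$ by freely adjoining, for each element of $D_{i}$ lacking a $\pi$-image (resp.\ a $\pi$-preimage), a fresh generic generator to serve as that image (resp.\ preimage), exactly as in \cref{genericfonction}. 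Using distinct fresh generators keeps $\pi$ injective, and in $D:=\bigcup_{i}D_{i}\in K$ every element has both an image and a preimage, so $\pi_{D}$ is a total bijection; the universal property is inherited from the one in $K$ together with the canonical way the images and preimages were labelled. Condition $4\ $ is obtained the same way, freely generating the two-sided $\pi$-orbit above the single generator $\langle\{x\}\rangle$ of $K$.

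For Condition $5\ $ I would apply Condition $5\ $ in $K$ to obtain $D$ and then push forward $\pi_{D_{0}},\pi_{D_{1}},\pi_{B}$ along the embeddings. The disjointness clauses of Algebraically Independent $3$-Amalgamation guarantee that these three partial bijections agree on their overlaps, so their union is a well-defined partial bijection on $\varphi_{D_{0}}(D_{0})\cup\varphi_{D_{1}}(D_{1})\cup\varphi_{B}(B)$; I then extend it to a total bijection by adjoining generic images and preimages as in Condition $3\ $, enlarging $D$ inside $K$ (which the statement of Condition $5\ $ permits). Condition $2\ $ is the analogue of the quantifier-elimination step of \cref{genericfonction}: an equation $z=\pi^{-1}(w)$ is rewritten as $w=\pi(z)$, so after introducing a new variable for each nested application of $\pi$ I may assume only $\pi$ (and not $\pi^{-1}$) occurs, and I separate the $\pi$-chain generated by $x$ alone from the part involving $y$. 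I strengthen $\varphi$ to decide all equalities and inequalities among the elements of these $\pi$-orbits, take $\psi'(x)$ from Condition $2\ $ in $K$ applied to the $L$-reduct, and set $\psi(x):=\psi'(x)\wedge\bigwedge_{i}(x_{i}=\pi(x_{i-1}))$ along the $x$-chain; the realisation of $\psi$ is then completed by defining $\pi$ on the new $L$-elements so as to satisfy the recorded equations and extending to a bijection.

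The main obstacle is exactly the bijectivity bookkeeping: at each of these stages $\pi$ and $\pi^{-1}$ must be grown together so that the result is genuinely injective and surjective, and in the quantifier-elimination step the strengthening of $\varphi$ must record enough (in)equalities among the elements of the $\pi$-orbits to guarantee that the partial map one defines on the extension is injective, hence extends to a two-sided bijection. Conceptually, however, this is a routine adaptation of the generic-function argument, with the orbits under the bijection $\pi$ playing the role of the term-trees there.
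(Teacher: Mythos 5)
Your proposal is correct and follows essentially the same route as the paper: Conditions $1$, $2$, $4$ and $5$ are handled exactly as in the generic-function case of \cref{genericfonction} (with $\pi^{-1}$-equations rewritten through $\pi$ and the injectivity constraints recorded in the strengthening), and Condition $3$ is proved by the same free adjunction of orbit elements, the only cosmetic difference being that the paper adjoins the full $\mathbb{Z}$-indexed family of copies of each new element at once, whereas you adjoin images and preimages one level at a time and take the union over $\omega$ stages. Both constructions produce the same free amalgam with the same universal property, so nothing essential is missing.
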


\begin{proof}
Condition $1\ $ is clearly satisfied in $K_{\pi}$, for the Conditions $2\ ,4\ $ and $5\ $ we can proceed as previously. Now we describe how to build the free amalgam for Condition $3\ $. Let $E\subseteq A,B$ be two extensions in $K_{\pi}$.
\vspace{10pt}

In the same fashion as previously we construct an increasing sequence of $L$-structures $(D_{i})_{i<\omega}$, and an increasing sequence of subsets $H_{i}\subseteq D_{i}$ and two increasing sequences of functions $\pi_{i},\pi_{i}^{-1} : H_{i}\rightarrow D_{i}$ such that $D_{i}\subseteq H_{i+1}$ and $\pi_{i+1}\circ \pi^{-1}_{i}=\pi_{i+1}^{-1}\circ \pi_{i}=id_{H_{i}}$ for every $i<\omega$. Let $D_{0}:=A\oplus_{E}B$, $H_{0}=A\cup B$, and $\pi_{0}=\pi_{A}\cup \pi_{B}$, $\pi^{-1}_{0}=\pi^{-1}_{A}\cup \pi^{-1}_{B}$. 

$$D_{1}:=D_{0}\oplus_{\langle \emptyset \rangle}\langle \lbrace [D_{0}\setminus (A\cup B)]_{k}\ :\ k \in \mathbb{Z}\setminus \lbrace 0 \rbrace\rbrace \rangle,$$ 

I.e., we consider the $L$-structure freely generated over $D_{0}$ by copies of $D_{0}\setminus (A\cup B)$ indexed on $\mathbb{Z}\setminus \lbrace 0 \rbrace$, we see the element of $D_{0}\setminus (A\cup B)$ as those corresponding to the copy of index $0$. Set $$H_{1}=D_{0}\cup\bigcup\limits_{k\in \mathbb{Z}\setminus \lbrace 0 \rbrace}[D_{0}\setminus (A\cup B)]_{k}.$$

We define $\pi_{1}$ as the extension of $\pi_{0}$ that sends an element of $[D_{0}\setminus (A\cup B)]_{k}$ to its copy of index $k+1$, i.\ e.\  it sends $t(a,b)_{k}$ to $t(a,b)_{k+1}$, similarly for $\pi_{0}^{-1}$ which sends an element of $[D_{0}\setminus (A\cup B)]_{k}$ to its copy of index $k-1$. Assume that $D_{i+1}$,$H_{i}$ and $\pi_{i},\pi_{i}^{-1}$ have been defined for $i>0$. We set

$$D_{i+1}:= D_{i}\oplus_{\langle \emptyset \rangle}\langle \lbrace [D_{i}\setminus D_{i-1}]_{k}\ :\ k \in \mathbb{Z}\setminus \lbrace 0 \rbrace\rbrace \rangle,$$ 

$$H_{i+1}:= D_{i}\cup \bigcup\limits_{k \in \mathbb{Z}\setminus \lbrace 0 \rbrace}[D_{i}\setminus D_{i-1}]_{k},$$

and take $\pi_{i+1}$ to be the function extending $\pi_{i}$ that sends an element of $[D_{i}\setminus D_{i-1}]_{k}$ to its copy of index $k+1$, similarly for $\pi_{0}^{-1}$ which sends an element of $[D_{i}\setminus D_{i-1}]_{k}$ to its copy of index $k-1$. Once this is done we take $D:=\bigcup\limits_{i<\omega} D_{i}$, $\pi_{D}:=\bigcup\limits_{i<\omega} \pi_{i}$ and $\pi^{-1}_{D}:=\bigcup\limits_{i<\omega} \pi^{-1}_{i}$.

\vspace{10pt}
This defines an $L_{\pi}$-structure of bijection $\pi_{D}$, and $D$ satisfies the universal property: it is generated by the images of $A$ and $B$ as an $L_{\pi}$-structure, given $D'$ and embeddings $\varphi_{A} : A \rightarrow D'$, $\varphi_{B} : B \rightarrow D'$ as in the statement we can define an increasing sequence of morphism of $L$-structures $\varphi_{i} : D_{i} \rightarrow D'$ in a canonical way and show that its union is a morphism of $L_{\pi}$-structures. We take $\varphi_{i} : D_{i} \rightarrow D'$ to be the only morphism of $L$-structures such that $\varphi_{i}\vert_{A}=\varphi_{A}$ and $\varphi_{i}\vert_{B}=\varphi_{B}$. If $\varphi_{i} : D_{i} \rightarrow D'$ is defined we take $\varphi_{i+1} : D_{i+1} \rightarrow D'$ to be its unique morphism extending $\varphi_{i}$ that sends $\pi_{D}^{k}(t)$ to $\pi^{k}_{D'}(\varphi(t))$ for every $t\in D_{i}$.

\vspace{10pt}
We define $\varphi:=\bigcup\limits_{i<\omega} \varphi_{i}$, it is a morphism of $L$-structures and by construction it commutes with $\pi$, so it is a morphism of $L_{\pi}$-structures. The construction of $\langle\lbrace x \rbrace \rangle$ in $K_{\pi}$ follows the same method.\end{proof}

\begin{definition}
We define $L_{\sigma}=L\cup \lbrace \sigma, \sigma^{-1} \rbrace$ new symbols of unary functions and the class $K_{\sigma}$ of $L_{\sigma}$-structures consisting of a structure $A$ in $K$ equipped with an automorphism $\sigma_{A}$ and its inverse function $\sigma^{-1}_{A}$.
\end{definition}

In $K_{\sigma}$ the Conditions $1\ -4\ $ are easy to check: we use the Free Amalgam property to show that if $E\subseteq A,B \in K_{\sigma}$ then there is an (unique) automorphism $\sigma_{A\oplus_{E}B}$ of $A\oplus_{E}B$ extending $\sigma_{A}$ and $\sigma_{B}$ and $(A\oplus_{E}B,\sigma_{A\oplus_{E}B})$ satisfies the universal property.

\vspace{10pt}
For Condition $4\ $ we consider the $L$-structure $\langle \lbrace (x_{i})_{i\in \mathbb{Z}}\rbrace \rangle \in K$ equipped with the unique automorphism that extends $\sigma_{A}$ and sends $x_{k}$ to $x_{k+1}$ for every $k\in \mathbb{Z}$. As for the generic substructure we need the structures of $K$ to be uniformly locally finite for Condition $2\ $ to hold, in this case the proof follows the same method. Also Condition  $5\ $ need not to hold even in this case for similar reasons as in $K_{R}$. The limit of $K_{\sigma}$ exists, and it might be NSOP$_1$, however in that case $\forkindep^{K}\centernot= \forkindep^{a}$ in the limit theory.

\begin{remark}
To sum up the last two sections, the theories we are building are model companions of theories which are made from very basic one-based stable theories, like the theory of the $\mathbb{F}_{2}$-vector space of infinite dimension, to which we add some basic structure (equivalence relation, bijection,..) that does not interact - typically some generic functions and relations as in \cref{expansgeneric}. This is slightly more general than the model companion of the empty theory, and it allows us to give some minimalist examples of strictly NSOP$_1$ theory: The model companion of the theory of an abelian group with a unary function $f$ such that $f(0)=0$.\end{remark}

\section{Some perspectives}

The theory $T$ of the limit of some $K$ satisfying $(H)$ is not a free amalgamation theory in the sense of \cite{conant2017axiomatic}. The stationary independence relation $\forkindep^{\Gamma}$ does not in general satisfies the closure condition (which is defined as : if $C\subseteq A,B$ are algebraically closed and $A\forkindep_{C}B$ then $A\cup B$ is algebraically closed), however the relation $\forkindep^{\Gamma}$ is interesting.

\vspace{10pt}
There are other known examples of NSOP$_1$ theories admitting a strong independence relation satisfying \cref{gammapropP}, which could motivate the study of such theories :
\begin{enumerate}
    \item The theory of $\omega$-free PAC fields of characteristic $0$ (see \cite{Chatzidakis2008IndependenceI} and \cite{Chatzidakis2002PropertiesOF}) with the the relation $\forkindep^{III}$ defined as: for $C\subseteq A,B$, $A\forkindep^{III}_{C}B$ if $A\forkindep^{lin}_{C}B$ and $acl(AB)=AB$. This relation is named strong independence in \cite[Definition 4.5]{fieldssimplicity}, written as here in \cite[Section 3.6]{Chatzidakis2008IndependenceI}, but this definition is not the same as the strong independence of \cite[Definition 1.2]{Chatzidakis2002PropertiesOF}.
    \item The theory of vector spaces of infinite dimension over an ACF with a non degenerate bilinear form with the relation $\forkindep^{\Gamma}$ (see \cite{adler2009geometric}, \cite{dobrowolski2020sets} or \cite{bossut2023note} for the definition of $\forkindep^{\Gamma}$). This is not a free amalgamation theory because of the closure condition.
    \item The theory ACFG of an ACF with a generic multiplicative subgroup, see strong independence in \cite[Section 2.2]{d2021forking} and the remark following \cite[Section 4.4]{d2021forking}.
\end{enumerate}

An other context that might be interesting to study is the one of `one-based' NSOP$_1$ theory. By this we mean an NSOP$_1$ theory with existence and weak elimination of imaginaries such that Kim-independence coincides with algebraic independence over arbitrary sets.

\begin{enumerate}
    \item If $T$ is a one-based NSOP$_1$ theory does forking independence coincides with $M$-independence?
    \item If $T$ is an NSOP$_1$ theory such that Kim-independence coincides with algebraic independence over models does $T$ satisfies existence? If that is the case does $\forkindep^{K}$ coincides with $\forkindep^{a}$ over arbitrary sets?
\end{enumerate}

\bibliographystyle{plain}
\bibliography{ref.bib}

\begin{thebibliography}{10}

\bibitem{adler2005explanation}
Hans Adler.
\newblock Explanation of independence, 2005.

\bibitem{adler2009geometric}
Hans Adler.
\newblock A geometric introduction to forking and thorn-forking.
\newblock {\em Journal of Mathematical Logic}, 9(01):1--20, 2009.

\bibitem{aslanyan2023independence}
Vahagn Aslanyan, Robert Henderson, Mark Kamsma, and Jonathan Kirby.
\newblock Independence relations for exponential fields.
\newblock {\em Annals of Pure and Applied Logic}, 174(8):103288, 2023.

\bibitem{baudisch2002generic}
Andreas Baudisch.
\newblock Generic variations of models of {T}.
\newblock {\em The Journal of Symbolic Logic}, 67(3):1025--1038, 2002.

\bibitem{baudisch2014free}
Andreas Baudisch.
\newblock Free amalgamation and automorphism groups, 2014.

\bibitem{bossut2023note}
Yvon Bossut.
\newblock A note on some example of {NSOP}1 theories.
\newblock {\em arXiv:2304.05404}, 2023.

\bibitem{bossut2025looking}
Yvon Bossut.
\newblock Looking for stabilizers in {NSOP}1.
\newblock {\em arXiv preprint arXiv:2505.15573}, 2025.

\bibitem{Chatzidakis2002PropertiesOF}
Zo{\'e} Chatzidakis.
\newblock Properties of forking in $\omega$-free pseudo-algebraically closed fields.
\newblock {\em Journal of Symbolic Logic}, 67:957 -- 996, 2002.

\bibitem{Chatzidakis2008IndependenceI}
Zo{\'e} Chatzidakis.
\newblock Independence in (unbounded) {PAC} fields, and imaginaries.
\newblock 2008.

\bibitem{fieldssimplicity}
Zo{\'e'} Chatzidakis.
\newblock Simplicity and independence for pseudo-algebraically closed fields.
\newblock {\em Models and Computability}, page~41, 2010.

\bibitem{chernikov2023transitivitylownessranksnsop1}
Artem Chernikov, Byunghan Kim, and Nicholas Ramsey.
\newblock Transitivity, lowness, and ranks in {NSOP}1 theories.
\newblock {\em The Journal of Symbolic Logic}, 88(3):919--946, 2023.

\bibitem{chernikov2016model}
Artem Chernikov and Nicholas Ramsey.
\newblock On model-theoretic tree properties.
\newblock {\em Journal of Mathematical Logic}, 16(02):1650009, 2016.

\bibitem{conant2017axiomatic}
Gabriel Conant.
\newblock An axiomatic approach to free amalgamation.
\newblock {\em The Journal of Symbolic Logic}, 82(2):648--671, 2017.

\bibitem{conant2019independence}
Gabriel Conant and Alex Kruckman.
\newblock Independence in generic incidence structures.
\newblock {\em The Journal of Symbolic Logic}, 84(2):750--780, 2019.

\bibitem{conant2023three}
Gabriel Conant and Alex Kruckman.
\newblock Three surprising instances of dividing.
\newblock {\em arXiv:2311.00609}, 2023.

\bibitem{d2021forking}
Christian d'Elb{\'e}e.
\newblock Forking, imaginaries, and other features of.
\newblock {\em The Journal of Symbolic Logic}, 86(2):669--700, 2021.

\bibitem{d2023axiomatic}
Christian d'Elb{\'e}e.
\newblock Axiomatic theory of independence relations in model theory.
\newblock {\em arXiv:2308.07064}, 2023.

\bibitem{dobrowolski2020sets}
Jan Dobrowolski.
\newblock Sets, groups, and fields definable in vector spaces with a bilinear form.
\newblock {\em Annales de l'Institut Fourier}, 73:1795--1841, 2023.

\bibitem{dobrowolski2022independence}
Jan Dobrowolski, Byunghan Kim, and Nicholas Ramsey.
\newblock Independence over arbitrary sets in {NSOP}1 theories.
\newblock {\em Annals of Pure and Applied Logic}, 173(2):103058, 2022.

\bibitem{kaplan2020kim}
Itay Kaplan and Nicholas Ramsey.
\newblock On {K}im-independence.
\newblock {\em Journal of the European Mathematical Society}, 22(5):1423--1474, 2020.

\bibitem{kaplan2019local}
Itay Kaplan, Nicholas Ramsey, and Saharon Shelah.
\newblock Local character of {K}im-independence.
\newblock {\em Proceedings of the American Mathematical Society}, 147(4):1719--1732, 2019.

\bibitem{kim1997simple}
Byunghan Kim and Anand Pillay.
\newblock Simple theories.
\newblock {\em Annals of Pure and Applied Logic}, 88(2-3):149--164, 1997.

\bibitem{kruckman2018generic}
Alex Kruckman and Nicholas Ramsey.
\newblock Generic expansion and skolemization in {NSOP}1 theories.
\newblock {\em Annals of Pure and Applied Logic}, 169(8):755--774, 2018.

\bibitem{onshuus2003thforking}
Alf Onshuus.
\newblock th-forking, algebraic independence and examples of rosy theories, 2003.

\bibitem{wagner2000simple}
Frank~Olaf Wagner.
\newblock {\em Simple theories}, volume 260.
\newblock Springer, 2000.

\end{thebibliography}

\end{document}